\numberwithin{equation}{section}
\numberwithin{figure}{section}
\theoremstyle{plain}
\newtheorem{theorem}{Theorem}[section]
\theoremstyle{plain}
\newtheorem*{theorem*}{Theorem}
\theoremstyle{plain}
\newtheorem{proposition}[theorem]{Proposition}
\theoremstyle{plain}
\newtheorem{lemma}[theorem]{Lemma}
\theoremstyle{plain}
\newtheorem{corollary}[theorem]{Corollary}
\theoremstyle{definition}
\newtheorem{definition}[theorem]{Definition}
\theoremstyle{definition}
\theoremstyle{definition}
\theoremstyle{remark}
\newcommand{\lieg}{\mathfrak{g}}
\newcommand{\lieh}{\mathfrak{h}}
\newcommand{\liel}{\mathfrak{l}}
\newcommand{\liep}{\mathfrak{p}}
\newcommand{\Uqg}{U_q(\mathfrak{g})}
\newcommand{\UqlS}{U_q(\mathfrak{l}_S)}
\newcommand{\UqgZ}{U_q^\mathbb{Z}(\mathfrak{g})}
\newcommand{\CqG}{\mathbb{C}_q[G]}
\newcommand{\SGP}{S_q[G / P_S]}
\newcommand{\SGPop}{S_q[G / P_S^\mathrm{op}]}
\newcommand{\flagman}{G / P_S}
\newcommand{\Cflag}{\mathbb{C}_q[G / P_S]}
\newcommand{\id}{\mathrm{id}}
\newcommand{\diff}{\mathrm{d}}
\newcommand{\del}{\partial}
\newcommand{\delbar}{\overline{\partial}}
\newcommand{\algP}{\mathcal{A}_+}
\newcommand{\algM}{\mathcal{A}_-}
\newcommand{\algC}{\mathcal{A}_\mathbb{C}}
\newcommand{\alg}{\mathcal{A}}
\newcommand{\subalg}{\mathcal{B}}
\newcommand{\calcP}{\Gamma_{+}}
\newcommand{\calcM}{\Gamma_{-}}
\newcommand{\calcPC}{\Gamma_{+, \mathbb{C}}}
\newcommand{\calcMC}{\Gamma_{-, \mathbb{C}}}
\newcommand{\calcDel}{\Gamma_\del}
\newcommand{\calcDelb}{\Gamma_{\delbar}}
\newcommand{\calc}{\Gamma_{\diff}}
\newcommand{\calcUn}{\Gamma^\wedge_{\diff, \mathrm{u}}}
\newcommand{\wt}{\mathrm{wt}}
\newcommand{\takalg}{\Omega^\bullet / \subalg^+ \Omega^\bullet}
\newcommand{\algquo}{\Phi(\Omega^\bullet)}
\newcommand{\IndP}{I_{(1)}}
\newcommand{\calA}{\mathcal{A}}
\newcommand{\calB}{\mathcal{B}}
\newcommand{\calC}{\mathcal{C}}
\newcommand{\calH}{\mathcal{H}}
\newcommand{\calM}{\mathcal{M}}
\newcommand{\iu}{\mathrm{i}}
\newcommand{\braidz}{T}
\newcommand{\genf}{\mathsf{f}}
\newcommand{\genv}{\mathsf{v}}
\newcommand{\genc}{\mathsf{c}}
\newcommand{\laupol}{\mathbb{Z}[q^{1/m}, q^{-1/m}]}
\begin{document}

\title[Kähler structures on quantum irreducible flag manifolds]{Kähler structures on quantum irreducible flag manifolds}

\author{Marco Matassa}

\address{OsloMet – storbyuniversitetet}

\email{marco.matassa@oslomet.no}

\begin{abstract}
We prove that all quantum irreducible flag manifolds admit Kähler structures, as defined by Ó Buachalla.
In order to show this result, we also prove that the differential calculi defined by Heckenberger and Kolb are differential $*$-calculi in a natural way.
\end{abstract}

\maketitle

\section*{Introduction}

Within the realm of non-commutative geometry, the study of structures coming from complex geometry is a relatively new trend, see for instance the papers \cite{fgr99, besm13, obu16}.
Here we are interested in Kähler structures, which were defined recently in \cite{obu17}.
Recall that the existence of a Kähler structure on a complex manifold has many far-reaching consequences, see \cite{huy} for an overview.
As shown in \cite{obu17}, many of these consequences also hold in the non-commutative setting, provided they are reformulated accordingly.

The main problem then becomes to prove the existence of such Kähler structures.
In the paper \cite{obu17} it was shown that they do exist for the class of quantum projective spaces.
More generally, it was conjectured that they should exist for all quantum irreducible flag manifolds.
The aim of this paper is to answer this conjecture in the affirmative.

Recall that a (generalized) flag manifold is a homogeneous space of the form $G / P$, where $P$ is a parabolic subgroup of $G$.
These spaces admit natural Kähler structures and moreover they exhaust all compact homogeneous Kähler manifolds \cite{wan54}.
The condition of being irreducible is equivalent to $G / P$ being a symmetric space. Hence the class of irreducible flag manifolds coincides with that of irreducible compact Hermitian symmetric spaces.

Quantum flag manifolds can be defined straightforwardly in terms of quantum subgroups of the quantum groups $\CqG$, see \cite{stdi99}.
The class of quantum irreducible flag manifolds is singled out by a series of important results of Heckenberger and Kolb \cite{heko04, heko06}.
They show that these quantum spaces admit a canonical $q$-analogue of the de Rham complex, with the homogenous components having the same dimensions as in the classical case. We stress that this is definitely not the case for general quantum spaces.

Since the definition of a Kähler structure on a quantum space requires the existence of a differential calculus, quantum irreducible flag manifolds clearly provide the best avenue for testing this concept.
However there is an obstacle that needs to be overcome: to study the existence of Kähler structures we actually need to have a differential $*$-calculus, a structure which has not been introduced yet for the Heckenberger-Kolb calculi.

For this reason the paper contains two main results.
The first result is \cref{thm:star-calculi}, which shows that the Heckenberger-Kolb calculus $\Omega^\bullet$ over $\Cflag$ becomes a differential $*$-calculus in a natural way.
The second result is \cref{thm:kahler}, which shows the existence of a Kähler structure on $\Omega^\bullet$, thus proving the conjecture formulated in \cite[Conjecture 4.25]{obu17}.

These results provide some further steps in the general understanding of complex geometry within the quantum setting.
Of course, many more questions still remain to be answered.
As an example, the question of positive-definiteness of the quantum metric coming from the Kähler structure, as defined in \cite{obu17}, certainly deserves further study.

The organization of the paper is as follows.
In \cref{sec:quantum} we discuss various preliminaries related to quantized enveloping algebras and quantum coordinate rings.
In \cref{sec:calculi} we recall various basic definitions regarding differential calculi, as well as the notions of Hermitian and Kähler structures.
In \cref{sec:flag} we review the description of quantum flag manifolds in terms of generators and relations.
In \cref{sec:heko} we present the Heckenberger-Kolb calculi and we prove our first main result, namely that they are naturally differential $*$-calculi.
In \cref{sec:structures} we prove our second main result, namely the existence of Kähler structures for these differential $*$-calculi.
Finally in \cref{sec:braiding} and \cref{sec:braidingcalc} we prove various identities, mainly related to the braiding, which are used in the proofs of the main text.

\vspace{3mm}

{\footnotesize
\emph{Acknowledgements}.
I would like to thank Réamonn Ó Buachalla for his comments on this paper.
}

\section{Notations and preliminaries}
\label{sec:quantum}

In this section we recall some basic facts concerning quantized enveloping algebras and quantum coordinate rings, as well as fixing some notations.
More details and missing explanations can be found in textbooks such as \cite{klsc97, netu13}.

\subsection{Quantized enveloping algebras}

Let $\lieg$ be a complex simple Lie algebra, with Cartan subalgebra $\lieh$, and denote by $(\cdot, \cdot)$ the non-degenerate symmetric bilinear form on $\lieh^*$ induced by the Killing form. We denote by $\Uqg$ the \emph{quantized enveloping algebra} of $\lieg$, the Hopf algebra with generators $\{ K_i, E_i, F_i \}_{i = 1}^r$ and relations as in \cite{heko06}. In particular we have
\[
\begin{gathered}
\Delta(K_i) = K_i \otimes K_i, \quad
\Delta(E_i) = E_i \otimes K_i + 1 \otimes E_i, \quad
\Delta(F_i) = F_i \otimes 1 + K_i^{-1} \otimes F_i, \\
S(K_i) = K_i^{-1}, \quad
S(E_i) = - E_i K_i^{-1}, \quad
S(F_i) = - K_i F_i.
\end{gathered}
\]
With these conventions we have the identity $S^2(X) = K_{2 \rho} X K_{2 \rho}^{-1}$, where $\rho$ is the half-sum of the positive roots of $\lieg$.
We will consider $0 < q < 1$, so that we have a $*$-structure corresponding to the compact real form of $\lieg$. For instance we can take
\[
K_i^* = K_i, \quad
E_i^* = F_i K_i, \quad
F_i^* = K_i^{-1} E_i.
\]
We remark that the precise form of the $*$-structure will not matter in the following, hence we are free to replace it with any other equivalent one.

For $0 < q < 1$ the representation theory of $\Uqg$ is essentially the same as for $U(\lieg)$.
Hence for any dominant weight $\lambda$ we have a $\Uqg$-module $V(\lambda)$.
Recall that the dual space $V^*$ becomes a $\Uqg$-module by $(X f)(v) := f(S(X) v)$, where $v \in V$ and $f \in V^*$.

Next we will consider the braiding on the category of $\Uqg$-modules, namely a collection of $\Uqg$-module isomorphisms $\hat{R}_{V, W} : V \otimes W \to W \otimes V$, where $V$ and $W$ are $\Uqg$-modules.
We will follow the choice of \cite{heko06}: the braiding is uniquely determined by the requirement that $\hat{R}_{V, W}$ is a $\Uqg$-module isomorphism and by the condition
\[
\hat{R}_{V, W} (v_\mathrm{hw} \otimes w_\mathrm{lw}) := q^{(\wt v_\mathrm{hw}, \wt w_\mathrm{lw})} w_\mathrm{lw} \otimes v_\mathrm{hw},
\]
where $v_\mathrm{hw}$ is a highest weight vector of $V$ and $w_\mathrm{lw}$ is a lowest weight vector of $W$.
Choosing a basis $\{v_i\}_i$ of $V$ and a basis $\{w_j\}_j$ of $W$, we will write
\[
\hat{R}_{V, W} (v_i \otimes w_j) = \sum_{k, l} (\hat{R}_{V, W})^{k l}_{i j} w_k \otimes v_l.
\]

\subsection{Quantum coordinate rings}

Next we recall the \emph{quantum coordinate rings} $\CqG$, which are essentially the Hopf $*$-algebras duals to $\Uqg$.
They are obtained from the matrix coefficients of the finite-dimensional (type 1) representations of $\Uqg$.
Recall that, given a $\Uqg$-module $V$, the \emph{matrix coefficients} are given by
\[
c^V_{f, v} (X) := f(X v), \quad
f \in V^*, \ v \in V.
\]

By a \emph{$\Uqg$-invariant inner product} on $V$ we mean an inner product $(\cdot, \cdot) : V \times V \to \mathbb{C}$, conjugate-linear in the first variable, such that
\[
(X v, w) = (v, X^* w), \quad
\forall v, w \in V, \ \forall X \in \Uqg.
\]
It is well-known that if $V$ is a simple module then this inner product is unique, up to a constant.
Now fix $(\cdot, \cdot)$ on $V$ and take an orthonormal basis $\{v_i\}_i$. Then the elements of the dual basis $\{f_i\}_i$ of $V^*$ can be identified with $f_i = (v_i, \cdot)$. In this case we write
\[
u^V_{i j} := c^V_{f_i, v_j}.
\]
If $V = V(\lambda)$ is a simple $\Uqg$-module of highest weight $\lambda$, we will also write
\[
c^\lambda_{f, v} := c^{V(\lambda)}_{f, v}, \quad
u^\lambda_{i j} := u^{V(\lambda)}_{i j}.
\]
It is easy to see that the elements $u^V_{i j}$ satisfy the relations
\[
\Delta(u^V_{i j}) = \sum_k u^V_{i k} \otimes u^V_{k j}, \quad
(u^V_{i j})^* = S(u^V_{j i}).
\]
Moreover, since $S^2(X) = K_{2 \rho} X K_{2 \rho}^{-1}$ we have
\[
S^2(u^V_{i j}) = q^{(2 \rho, \wt v_i - \wt v_j)} u^V_{i j}.
\]

\section{Differential calculi}
\label{sec:calculi}

In this section we recall various notions related to the description of differential calculi on quantum spaces.
In particular we consider the notions of Hermitian and Kähler structures, as defined in \cite{obu17}.
Moreover we recall some aspects of Takeuchi's categorical equivalence, which is a quite useful tool when dealing with differential calculi.

\subsection{First order differential calculus}

A \emph{first order differential calculus} (FODC) over an algebra $\calA$ is an $\calA$-bimodule $\Gamma$ together with a linear map $\diff: \calA \to \Gamma$, such that $\Gamma = \mathrm{span} \{ a \diff(b) c : a, b, c \in \calA \}$ and $\diff$ satisfies the Leibnitz rule
\[
\diff(a b) = \diff(a) b + a \diff(b).
\]
If $\calA$ is a $*$-algebra, then $\Gamma$ is a \emph{$*$-FODC} if the $*$-structure of $A$ extends to a $*$-structure of $\Gamma$ in such a way that $\diff(a)^* = \diff(a^*)$.

Suppose in addition that $\calH$ is a Hopf algebra and $\Delta_\calA: \calA \to \calH \otimes \calA$ is a left $\calH$-comodule algebra structure on $\calA$.
Then $\Gamma$ is called \emph{left-covariant} if there exists a left $\calH$-comodule structure $\Delta_\Gamma: \Gamma \to \calH \otimes \Gamma$ on $\Gamma$ such that
\[
\Delta_\Gamma(a \diff(b) c) = \Delta_\calA(a) (\id \otimes \diff) (\Delta_\calA(b)) \Delta_\calA(c).
\]

Given a family of FODCs $\{ (\Gamma_i, \diff_i) \}_{i = 1}^n$, their \emph{direct sum} is the FODC $(\Gamma, \diff)$ with $\diff = \bigoplus_i d_i$ and $\Gamma = \calA \cdot \diff(\calA) \subset \bigoplus_i \Gamma_i$.
If the calculi are left covariant then so is their direct sum.

Finally, suppose that $\calB \subset \calA$ is a subalgebra and $(\Gamma, \diff)$ is a FODC over $\calA$. Then there is a FODC $(\Gamma |_\calB, \diff |_\calB)$ over $\calB$ defined by
\[
\Gamma |_\calB := \{ a \diff(b) : a, b \in \calB \}, \quad
\diff |_\calB (a) := \diff(a), \ \forall a \in \calB.
\]
This FODC is called the \emph{FODC over $\calB$ induced by $\Gamma$}.

\subsection{Higher order differential calculus}

A \emph{differential calculus} over $\calA$ is a differential graded algebra $(\Gamma^\wedge = \bigoplus_{k \in \mathbb{N}} \Gamma^{\wedge k}, \diff)$ such that $\Gamma^{\wedge 0} = \calA$ and $\Gamma^\wedge$ is generated by $\calA$ and $\diff \calA$.
We say that $\Gamma^\wedge$ has \emph{dimension} $n$ if $\Gamma^{\wedge n} \neq 0$ and $\Gamma^{\wedge k} = 0$ for $k > n$.

If $\calA$ is a $*$-algebra, then $\Gamma^\wedge$ is a \emph{differential $*$-calculus} if the $*$-structure of $\calA$ extends to an involution of $\Gamma$ such that $\diff(\omega)^* = \diff(\omega^*)$ for any $\omega \in \Gamma$ and moreover
\[
(\omega \wedge \chi)^* = (-1)^{p q} \chi^* \wedge \omega^*, \quad
\omega \in \Gamma^{\wedge p}, \ \chi \in \Gamma^{\wedge q}.
\]
An element $\omega \in \Gamma^\wedge$ is called \emph{real} if $\omega^* = \omega$.

Given a FODC $\Gamma$ over $\calA$, there exists a \emph{universal differential calculus} $(\Gamma^\wedge_{\mathrm{u}}, \diff_{\mathrm{u}})$, uniquely determined by the following property: if $\Gamma^\wedge$ is any differential calculus over $\calA$ such that $\Gamma^{\wedge 1} = \Gamma$, then $\Gamma$ is isomorphic to a quotient of $\Gamma^\wedge_{\mathrm{u}}$.
The $*$-structure lifts to the universal differential calculus, meaning that if $\Gamma$ is a $*$-FODC then $\Gamma^\wedge_{\mathrm{u}}$ is a differential $*$-calculus in a canonical way, see \cite[Chapter 12, Proposition 4]{klsc97}.

\subsection{Hermitian and Kähler structures}

Many structures from complex geometry can be adapted to the quantum setting, as discussed in \cite{obu17}.
We will now recall the notions of Hermitian and Kähler structures, as defined in the cited paper.
In this subsection $(\Omega^\bullet, \diff)$ will denote a differential $*$-calculus of dimension $2 n$.

\begin{definition}
\label{def:almost}
An \emph{almost symplectic form} is a central real 2-form $\sigma \in \Omega^\bullet$ satisfying the following property: denoting by $L_\sigma: \Omega^\bullet \to \Omega^\bullet$ the \emph{Lefschetz map} given by $L_\sigma(\omega) := \sigma \wedge \omega$, the map $L_\sigma^{n - k}: \Omega^k \to \Omega^{2 n - k}$ is an isomorphism for all $k = 0, \cdots, n - 1$.
\end{definition}

We will omit the subscript $\sigma$ in the following, as the dependence will be clear.

\begin{definition}
\label{def:hermitian}
A \emph{Hermitian structure} for $\Omega^\bullet$ is a pair $(\Omega^{(\bullet, \bullet)}, \sigma)$, where $\Omega^{(\bullet, \bullet)}$ is a complex structure and $\sigma$ is an almost symplectic form, called the \emph{Hermitian form}, such that $\sigma \in \Omega^{(1, 1)}$.
\end{definition}

For the definition of complex structures in this context we refer to \cite{obu17} and \cite{klvs11}.

\begin{definition}
\label{def:kahler}
A \emph{Kähler structure} for $\Omega^\bullet$ is a Hermitian structure $(\Omega^{(\bullet, \bullet)}, \kappa)$, such that the Hermitian form $\kappa$ is $\diff$-closed.
\end{definition}

The existence of such structures on a differential calculus $\Omega^\bullet$ has various consequences, as in the classical case.
We refer to \cite{obu17} for these results and more background material.

\subsection{Takeuchi's categorical equivalence}

We will now briefly review some aspects of Takeuchi's categorical equivalence \cite{takeuchi}, following \cite[Section 2.2.8]{heko06}.

Let $\calB \subset \calA$ be a left coideal subalgebra of a Hopf algebra $\calA$ with bijective antipode. Then $\calC := \calA / \calB^+ \calA$ is a right $\calA$-module coalgebra, where $\calB^+ := \{ b \in \calB : \varepsilon(b) = 0 \}$.
Let ${}^\calA_\calB \calM$ denote the category of left $\calA$-covariant left $\calB$-modules and let ${}^\calC \calM$ denote the category of left $\calC$-comodules.
Then there exist functors
\[
\begin{gathered}
\Phi: {}^\calA_\calB \calM \to {}^\calC \calM, \quad \Phi(\Gamma) := \Gamma / \calB^+ \Gamma, \\
\Psi: {}^\calC \calM \to {}^\calA_\calB \calM, \quad \Psi(V) := \calA \square_\calC V.
\end{gathered}
\]
Here $\square_\calC$ denotes the cotensor product over $\calC$.

\begin{theorem}[{\cite[Theorem 1]{takeuchi}}]
Suppose that $\calA$ is a faithfully flat right $\calB$-module. Then $\Phi$ and $\Psi$ give rise to an equivalence of categories between ${}^\calA_\calB \calM$ and ${}^\calC \calM$.
\end{theorem}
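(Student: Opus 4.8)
The plan is to realise $\Phi$ and $\Psi$ as an adjoint pair $\Phi \dashv \Psi$, which needs no flatness hypothesis, and then to use the faithful flatness of $\calA$ over $\calB$ to promote the unit and counit of the adjunction to natural isomorphisms. As a preliminary one checks that the two functors are well defined. For $\Gamma \in {}^\calA_\calB \calM$ with left $\calA$-coaction $\delta$, the composite of $\delta$ with the projections $\calA \to \calC$ and $\Gamma \to \Gamma / \calB^+ \Gamma$ descends to a left $\calC$-coaction on $\Phi(\Gamma)$; the only nontrivial point is that covariance together with $\calB$ being a left coideal subalgebra forces $\delta(\calB^+ \Gamma) \subseteq \calB^+ \calA \otimes \Gamma + \calA \otimes \calB^+ \Gamma$. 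Dually, the cotensor product $\calA \square_\calC V$ is the equaliser of the two evident maps $\calA \otimes V \rightrightarrows \calA \otimes \calC \otimes V$, and left multiplication by $\calA$ and the coaction $\Delta \otimes \id$ on the first leg restrict to it, so that $\Psi(V) \in {}^\calA_\calB \calM$.

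For the adjunction I would take the unit $\eta_\Gamma \colon \Gamma \to \Psi\Phi(\Gamma) = \calA \square_\calC (\Gamma / \calB^+ \Gamma)$ given by $\gamma \mapsto \gamma_{(-1)} \otimes \overline{\gamma_{(0)}}$, which lands in the cotensor product by coassociativity, and the counit $\epsilon_V \colon \Phi\Psi(V) = (\calA \square_\calC V) / \calB^+ (\calA \square_\calC V) \to V$ induced by $\varepsilon \otimes \id$, which is well defined on the quotient since $\varepsilon$ annihilates $\calB^+ \calA$. Naturality and the two triangle identities are verified by direct computation, giving $\Phi \dashv \Psi$ with no use of the flatness hypothesis; equivalently one writes down the natural isomorphism $\mathrm{Hom}_{{}^\calC \calM}(\Phi(\Gamma), V) \cong \mathrm{Hom}_{{}^\calA_\calB \calM}(\Gamma, \Psi(V))$ by the same formulae. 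Note that $\eta_\calA$ is already an isomorphism for formal reasons, being the standard identification $\calA \cong \calA \square_\calC \calC$ for the right $\calC$-comodule $\calA$.

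The substance of the theorem is that faithful flatness forces $\eta$ and $\epsilon$ to be isomorphisms. The decisive input it supplies is that $\calB$ coincides with the coinvariants $\calA^{\mathrm{co}\,\calC}$ and that the canonical map
\[
\mathrm{can} \colon \calA \otimes_\calB \calA \longrightarrow \calA \otimes \calC, \qquad
a \otimes_\calB a' \longmapsto a\, a'_{(1)} \otimes \pi(a'_{(2)}),
\]
is bijective — that is, $\calB \subseteq \calA$ is a $\calC$-Galois extension — together with the fact that faithful flatness of $\calA$ over $\calB$ also makes $\calA$ faithfully coflat as a right $\calC$-comodule, so that $\Psi = \calA \square_\calC (-)$ is exact and conservative and $\calA \otimes_\calB (-)$ commutes with the relevant cotensor products. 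Granting these, faithfully flat descent takes over: applying the exact faithful functor $\calA \otimes_\calB (-)$ to $\eta_\Gamma$ and unwinding the identifications above turns $\id_\calA \otimes_\calB \eta_\Gamma$ into the natural map $a \otimes_\calB \gamma \mapsto a \gamma_{(-1)} \otimes \overline{\gamma_{(0)}}$, which for $\Gamma = \calA$ is precisely $\mathrm{can}$; a standard colimit-and-naturality argument then promotes this to an isomorphism for every $\Gamma$, whence $\eta_\Gamma$ is an isomorphism and $\Phi$ is fully faithful. The triangle identity $\Psi(\epsilon_V) \circ \eta_{\Psi(V)} = \id_{\Psi(V)}$ with $\eta_{\Psi(V)}$ invertible then shows $\Psi(\epsilon_V)$ is an isomorphism, and conservativity of $\Psi$ yields that $\epsilon_V$ is; hence $\Phi$ and $\Psi$ are mutually quasi-inverse equivalences.

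The main obstacle is the bijectivity of $\mathrm{can}$ — equivalently, the exactness of the normalised cobar-type complex associated with the right $\calC$-coaction on $\calA$ — and, around it, the derivation of coflatness and of the coinvariance identity $\calB = \calA^{\mathrm{co}\,\calC}$ from the faithful flatness of $\calA$ over $\calB$; this is the genuinely non-formal part, everything downstream being bookkeeping with the adjunction and standard descent.
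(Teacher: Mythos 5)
The paper offers no proof of this statement: it is quoted directly from Takeuchi \cite{takeuchi} and used as a black box, so there is nothing internal to compare your argument against. Judged on its own terms, your proposal has the right overall shape — it matches the standard descent-theoretic proofs (set up the adjunction $\Phi \dashv \Psi$, which is formal, then use faithful flatness to make the unit and counit invertible) — but it is an outline rather than a proof. You explicitly ``grant'' precisely the statements that carry all of the weight: bijectivity of the canonical map $\mathrm{can}\colon \calA \otimes_\calB \calA \to \calA \otimes \calC$, the identification $\calB = \calA^{\mathrm{co}\,\calC}$, and faithful coflatness of $\calA$ as a $\calC$-comodule (hence exactness and conservativity of $\Psi$), and you acknowledge that these constitute ``the genuinely non-formal part''. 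Deferring the crux in this way is a genuine gap: the theorem is exactly the assertion that faithful flatness of $\calA$ over $\calB$ suffices, so a proof must actually extract these consequences from that hypothesis rather than postulate them.

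Two further concrete points. First, the ``standard colimit-and-naturality argument'' promoting the isomorphism from $\Gamma = \calA$ to arbitrary $\Gamma$ is not automatic, since an object of ${}^\calA_\calB \calM$ is not in any evident way a colimit of copies of $\calA$ inside that category; the workable route (essentially Takeuchi's) is to resolve $\Gamma$ by induced objects of the form $\calB \otimes X$, on which the Galois-type map $\calA \otimes_\calB \Gamma \to \calA \otimes \Phi(\Gamma)$ is inverted explicitly using the antipode, and then to invoke right-exactness of both functors and naturality. This is also where the standing hypothesis that the antipode is bijective — which your sketch never mentions — enters the argument, together with the left/right bookkeeping for which side $\calA$ is assumed faithfully flat over $\calB$. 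Second, even granting $\mathrm{can}$, you still need plain flatness of $\calA$ over $\calB$ to commute $\calA \otimes_\calB (-)$ past the equalizer defining $\square_\calC$, and the counit step needs its own independent input (conservativity/exactness of $\Psi$, i.e.\ faithful coflatness, or a direct argument that $\epsilon_V$ is bijective); neither is supplied. In short: the strategy is sound and consistent with the known proof, but as written the central implications remain unproved.
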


For all the algebras considered in this paper the condition of being faithfully flat will be satisfied, hence we will be able to use Takeuchi's categorical equivalence.

\section{Quantum flag manifolds}
\label{sec:flag}

In this section we define the quantum flag manifolds $\Cflag$ and recall their presentation by generators and relations, as given by Heckenberger and Kolb.
Moreover we will discuss the $*$-structure on $\Cflag$ and its action on the generators.

\subsection{Generators}
Let $S \subseteq \Pi$ be a subset of the simple roots of $\lieg$. Corresponding to any such choice we have the Levi factor $\liel_S$, which is a subalgebra of the standard parabolic subalgebra $\liep_S$.
In the quantum setting we define the \emph{quantized Levi factor} by
\[
\UqlS := \langle K_\lambda, \ E_i, \ F_i : i \in S \rangle \subseteq \Uqg.
\]
Here $\langle \cdot \rangle$ denotes the algebra generated by these elements in $\Uqg$.
Note that this is a Hopf $*$-subalgebra.
Then we define the \emph{quantum flag manifold} corresponding to $G / P_S$ by
\[
\Cflag := \CqG^{\UqlS} = \{ a \in \CqG : X a = \varepsilon(X) a, \ \forall X \in \UqlS \}.
\]

In the classical case the following realization of $\flagman$ is well-known.
Consider the dominant weight $\lambda := \sum_{i \notin S} \omega_i$ and write $N := \dim V(\lambda)$.
Then $G / P_S$ is isomorphic to the $G$-orbit of the highest weight vector $v_\lambda \in V(\lambda)$ in the projective space $\mathbb{P} (V(\lambda))$.

We will fix a weight basis $\{v_i\}_{i = 1}^N$ of $V(\lambda)$, with the convention that $v_N$ is a highest weight vector.
Denote by $\{f_i\}_{i = 1}^N$ the dual basis of $V(\lambda)^* \cong V(- w_0 \lambda)$. Then we define
\[
z^{i j} := c^\lambda_{f_i, v_N} c^{- w_0 \lambda}_{v_j, f_N} \in \CqG, \quad i, j = 1, \cdots, N.
\]
Here in writing $c^{- w_0 \lambda}_{v_j, f_N}$, we consider the element $v_j \in V(\lambda)$ as an element of $V(\lambda)^{**}$.

Let us explain this point in more detail, as it can give rise to some confusion.
Given a finite-dimensional $\Uqg$-module $V$, we have a map $V \to V^{**}$ which assign to $v \in V$ the linear functional $\widetilde{v} \in V^{**}$ given by $\widetilde{v}(f) := f(v)$, where $f \in V^*$.
The map $V \to V^{**}$ is a vector space isomorphism, but not an isomorphism of $\Uqg$-modules, since we have the relation
\[
X \widetilde{v}(f) = f(S^2(X) v) = \widetilde{S^2(X) v}(f).
\]
The upshot is that we can identify $V^{**}$ with $V$ in terms of the action $X \cdot v = S^2(X) v$.

\begin{proposition}[{\cite[Proposition 3.2]{heko06}}]
The elements $\{z^{i j}\}_{i, j = 1}^N$ generate $\Cflag$.
\end{proposition}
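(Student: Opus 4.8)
The plan is to exhibit $\Cflag$ as the matrix coefficients of a specific module, following the classical picture that $\flagman$ sits inside $\mathbb{P}(V(\lambda))$, and then to verify that the elements $z^{ij}$ span exactly this space. First I would recall that $\CqG$ decomposes, by Peter--Weyl, as a direct sum over dominant weights $\mu$ of the isotypic components $C^\mu := \mathrm{span}\{c^\mu_{f, v} : f \in V(\mu)^*, v \in V(\mu)\}$, with $C^\mu \cong V(\mu) \otimes V(\mu)^*$ as a $\Uqg$-bimodule. Taking $\UqlS$-invariants in the right regular action, one sees that $\Cflag = \bigoplus_\mu C^\mu_{\mathrm{inv}}$, where $C^\mu_{\mathrm{inv}} \cong V(\mu) \otimes (V(\mu)^*)^{\UqlS}$. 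The space of $\UqlS$-invariant functionals on $V(\mu)$ is nonzero precisely when $V(\mu)$ contains a $\UqlS$-invariant vector, and in that case (by the theory of these spherical representations for Hermitian symmetric pairs) it is one-dimensional, spanned by the functional dual to the highest weight vector, and the relevant $\mu$ are exactly the nonnegative integer multiples $k\lambda$ of our fundamental weight $\lambda = \sum_{i \notin S}\omega_i$. Hence $\Cflag = \bigoplus_{k \geq 0} C^{k\lambda}_{\mathrm{inv}}$ with $C^{k\lambda}_{\mathrm{inv}} \cong V(k\lambda)$ as a left $\Uqg$-module.

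Next I would identify the subalgebra generated by the $z^{ij}$. From the definition $z^{ij} = c^\lambda_{f_i, v_N}\, c^{-w_0\lambda}_{v_j, f_N}$, the first factor lies in $C^\lambda$ and, as $i$ varies, spans the copy of $V(\lambda)$ realized with the fixed right-hand functional $v_N = v_\lambda$ (the $\UqlS$-invariant one up to scalar); the second factor lies in $C^{-w_0\lambda} = C^{\lambda^*}$ and spans $V(\lambda)^* \cong V(-w_0\lambda)$ with the fixed left argument $f_N$. Using the coproduct formula $\Delta(c^V_{f,v}) = \sum_k c^V_{f, v_k} \otimes c^V_{f_k, v}$ one checks that $z^{ij}$ is genuinely $\UqlS$-invariant, so the $z^{ij}$ do lie in $\Cflag$. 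The subalgebra they generate is then, by construction, the image of the multiplication map applied repeatedly to these two $\Uqg$-submodules; as a left $\Uqg$-module its degree-$k$ part (products of $k$ factors of type $c^\lambda$ and $k$ factors of type $c^{-w_0\lambda}$) is a quotient of $V(\lambda)^{\otimes k} \otimes (V(\lambda)^*)^{\otimes k}$ paired against fixed highest/lowest vectors, which surjects onto the Cartan component $V(k\lambda) \otimes V(k\lambda)^*$-piece sitting inside $C^{k\lambda}$; since the fixed right argument of the $c^\lambda$ factors is $v_\lambda$, the product lands in $C^{k\lambda}_{\mathrm{inv}} \cong V(k\lambda)$, and one must argue the surjection onto all of $V(k\lambda)$ is actually onto.

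The key point making this work is that $V(k\lambda)$ is the \emph{Cartan component} of $V(\lambda)^{\otimes k}$, i.e. the irreducible summand of highest weight $k\lambda$ appears with multiplicity one and is generated as a $\Uqg$-module by $v_\lambda^{\otimes k}$ together with its images under lowering operators; equivalently, the matrix coefficients $c^{k\lambda}_{f, v_{k\lambda}}$ are precisely the products of the $c^\lambda_{f_i, v_\lambda}$ spanning a subrepresentation that must equal all of $V(k\lambda)$ by irreducibility once one checks it is nonzero and $\Uqg$-stable. Running the same argument on the conjugate side for the $c^{-w_0\lambda}_{v_j, f_N}$ factors and combining, one concludes that the subalgebra generated by $\{z^{ij}\}$ contains $C^{k\lambda}_{\mathrm{inv}}$ for every $k \geq 0$, hence equals $\Cflag$. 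I expect the main obstacle to be precisely this last surjectivity/irreducibility step: one must pin down that the span of the length-$k$ products of the two families of matrix coefficients is $\Uqg$-stable and nonzero, so that irreducibility of $V(k\lambda)$ forces equality — this is where the combinatorics of the Cartan component and a careful bookkeeping of which functional/vector is fixed in each factor genuinely enter, and it is essentially the quantum analogue of the classical Borel--Weil description of $\flagman \hookrightarrow \mathbb{P}(V(\lambda))$ together with the fact that its homogeneous coordinate ring is $\bigoplus_k V(k\lambda)^*$.
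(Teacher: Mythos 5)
Note first that the paper does not prove this statement at all: it is quoted from \cite[Proposition 3.2]{heko06} (whose proof in turn rests on \cite{heko04} and \cite{stdi99}), so your proposal is being measured against that external argument rather than anything in the text. On its merits, your sketch contains a genuine error at its foundation: the isotypic decomposition you assign to $\Cflag$ is wrong. Since $\UqlS$ contains \emph{all} the elements $K_\nu$, a $\UqlS$-invariant vector or functional must have weight zero; hence the spherical weights $\mu$ (those with $V(\mu)^{\UqlS}\neq 0$) lie in the root lattice, and they are certainly not the multiples $k\lambda$ of $\lambda=\omega_s$, nor is the invariant functional the one dual to the highest weight vector (that functional has weight $-k\lambda\neq 0$, so it spans a one-dimensional $\UqlS$-submodule on which the $K_\nu$ act nontrivially). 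Already for quantum $\mathbb{C}P^1$ one has $\Cflag\cong\bigoplus_{k\geq 0}V(2k\omega_1)$, not $\bigoplus_k V(k\omega_1)$; in general the spherical weights form a semigroup of rank equal to the real rank of the symmetric space (Cartan--Helgason), so for higher-rank irreducible flag manifolds they are not even multiples of a single weight such as $\lambda-w_0\lambda$. Relatedly, a product of $k$ coefficients $c^{\lambda}_{f_i,v_N}$ with $k$ coefficients $c^{-w_0\lambda}_{v_j,f_N}$ is a matrix coefficient of $V(k\lambda)\otimes V(k\lambda)^*$ evaluated against the fixed vector $v_{k\lambda}\otimes f_{k\lambda}$; it decomposes over \emph{many} spherical components, and does not ``land in'' a single copy of $V(k\lambda)$.

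Even after repairing the decomposition, the argument you flag as the ``main obstacle'' is not the one your Cartan-component reasoning settles. Irreducibility of the Cartan component $V(k\lambda)\subset V(\lambda)^{\otimes k}$ does show that products of the $c^{\lambda}_{f_i,v_N}$ span all of $\{c^{k\lambda}_{f,v_{k\lambda}}\}$ (and similarly on the conjugate side), i.e.\ it identifies the two homogeneous coordinate rings $\SGP$ and $\SGPop$. What it does not give is the crucial containment in the other direction: that \emph{every} $\UqlS$-spherical isotypic component of $\CqG$ is contained in the span of the cyclic $\Uqg$-modules generated by the vectors $v_{k\lambda}\otimes f_{k\lambda}$ inside $V(k\lambda)\otimes V(k\lambda)^*$, equivalently that the degree-zero part of $\SGP\cdot\SGPop$ exhausts the invariant algebra $\Cflag$ rather than a proper subalgebra. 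This is exactly the substantive step in \cite{stdi99} and \cite{heko04,heko06}, where it is handled by an honest analysis of which spherical representations occur and how the invariant vectors are reached (quantum Cartan--Helgason-type input), and your proposal leaves it as an assertion. As written, the proof does not go through.
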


Observe that $\Cflag$ has a natural factorization in terms of the algebras
\[
\SGP := \{ c^\lambda_{f, v_N} : f \in V(\lambda)^* \}, \quad
\SGPop := \{ c^{- w_0 \lambda}_{v, f_N} : v \in V(\lambda)^{**} \}.
\]
These are the quantum analogues of the homogeneous coordinate rings of $G / P_S$ and $G / P_S^\mathrm{op}$.

\subsection{Relations}

In order to present the relations for the quantum flag manifolds $\Cflag$, it is convenient to introduce some auxiliary algebras.
First we define two algebras $\algP$ and $\algM$ as follows.
The algebra $\algP$ has generators $\genf^1, \cdots, \genf^N$ and relations
\[
\genf^i \genf^j = q^{-(\lambda, \lambda)} \sum_{k, l} (\hat{R}_{V, V})^{i j}_{k l} \genf^k \genf^l.
\]
The algebra $\algM$ has generators $\genv^1, \cdots, \genv^N$ and relations
\[
\genv^i \genv^j = q^{-(\lambda, \lambda)} \sum_{k, l} (\hat{R}_{V^*, V^*})^{i j}_{k l} \genv^k \genv^l.
\]
They become $\Uqg$-module algebras by identifying $\{\genf^i\}_{i = 1}^N$ with the basis $\{f_i\}_{i = 1}^N$ of $V(\lambda)^*$ and $\{\genv^i\}_{i = 1}^N$ with the basis $\{v_i\}_{i = 1}^N$ of $V(\lambda)^{**}$.
In this way we obtain $\Uqg$-module algebra isomorphisms between $\SGP$ and $\algP$ and between $\SGPop$ and $\algM$, given by
\[
\genf^i \mapsto c^\lambda_{f_i, v_N}, \quad
\genv^i \mapsto c^{- w_0 \lambda}_{v_i, f_N}.
\]

Next we define the algebra $\algC := \algP \otimes \algM$ with the exchange relations
\[
\genv^i \genf^j := q^{(\lambda, \lambda)} \sum_{k, l} (\hat{R}_{V, V^*})^{i j}_{k l} \genf^k \genv^l.
\]
The algebra $\algC$ admits a central invariant element defined by
\[
\genc := \sum_i \genv^i \genf^i.
\]
Hence we can define $\alg := \alg_\mathbb{C} / \langle \genc - 1 \rangle$.
It becomes a $\mathbb{Z}$-graded algebra upon setting $\deg \genf^i = -1$ and $\deg \genv^i = 1$.
Finally we write $\subalg := \alg^0$ for the degree zero part of $\alg$.

\begin{proposition}[{\cite[Proposition 3.2]{heko06}}]
\label{prop:iso-alg}
We have an isomorphism of $\Uqg$-module algebras $\subalg \cong \Cflag$ given by $\genf^i \genv^j \mapsto z^{i j}$.
\end{proposition}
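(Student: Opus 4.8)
The plan is to leverage the $\Uqg$-module algebra isomorphisms $\SGP \cong \algP$ and $\SGPop \cong \algM$ already established, and upgrade them to an isomorphism on the level of the degree-zero subalgebra $\subalg = \alg^0$. First I would observe that, by the very definition of $z^{ij}$, the multiplication map $\SGP \otimes \SGPop \to \CqG$ sends $c^\lambda_{f_i, v_N} \otimes c^{-w_0\lambda}_{v_j, f_N}$ to $z^{ij}$, and that its image lies in $\Cflag$; combined with the preceding proposition (that the $z^{ij}$ generate $\Cflag$), this shows surjectivity onto $\Cflag$. On the other side, the obvious candidate map $\algC \to \CqG$ is the composite of the tensor product of the two isomorphisms $\algP \cong \SGP$, $\algM \cong \SGPop$ with the multiplication in $\CqG$; the content is to check this is an algebra homomorphism, which requires precisely that the exchange relation $\genv^i \genf^j = q^{(\lambda,\lambda)} \sum_{k,l}(\hat R_{V,V^*})^{ij}_{kl} \genf^k \genv^l$ in $\algC$ is satisfied in $\CqG$ by the images $c^{-w_0\lambda}_{v_i, f_N}$ and $c^\lambda_{f_j, v_N}$. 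This is a computation with the braiding and the commutation relations of matrix coefficients in $\CqG$, of the same flavour as the ones verified in \cite{heko06}.

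Next I would analyze the element $\genc = \sum_i \genv^i \genf^i$. Under the map to $\CqG$ it goes to $\sum_i c^{-w_0\lambda}_{v_i, f_N} c^\lambda_{f_i, v_N}$; using the duality pairing and the fact that $\{v_i\}$, $\{f_i\}$ are dual bases, together with $\Delta$ applied to the relevant matrix coefficients, one checks that this sum equals $1 \in \CqG$ (equivalently, $\genc - 1$ maps to zero). Hence the homomorphism $\algC \to \CqG$ descends to $\alg = \algC / \langle \genc - 1 \rangle$, and since $\genc$ is homogeneous of degree zero, this is a $\mathbb{Z}$-graded map; restricting to degree zero gives a homomorphism $\subalg \to \Cflag$ sending $\genf^i \genv^j \mapsto z^{ij}$. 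It is $\Uqg$-equivariant because each of the building-block maps is, and it is surjective because the $z^{ij}$ generate $\Cflag$.

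Finally, for injectivity I would compare dimensions (or rather, Poincaré series) of the two graded algebras. The algebra $\alg_\mathbb{C} = \algP \otimes \algM$ has a known Poincaré series from the explicit quadratic relations — these are the quantum symmetric/exterior-type algebras whose classical limits are the homogeneous coordinate rings, so one has the flat deformation statement that $\dim \algP_n$ equals $\dim S^n(V(\lambda)^*)$ in the appropriate truncation dictated by the flag manifold, and similarly for $\algM$ — and quotienting by the central nonzerodivisor $\genc - 1$ and passing to degree zero yields a graded piece count matching $\dim \Cflag$ in each $\CqG$-weight. Alternatively, and more cleanly, one invokes faithful flatness and Takeuchi's equivalence: both $\subalg$ and $\Cflag$ are realized as $\Psi$ of the same $\calC$-comodule, so the surjection must be an isomorphism. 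I expect the main obstacle to be the verification that the exchange relations hold for the matrix coefficients in $\CqG$ — i.e. pinning down the correct power $q^{(\lambda,\lambda)}$ and the correct braiding $\hat R_{V,V^*}$ (as opposed to one of its relatives $\hat R_{V^*,V}$ etc.) so that the homomorphism property holds on the nose; the surjectivity and the $\genc \mapsto 1$ step are comparatively routine, and injectivity is essentially a bookkeeping/flatness argument once the homomorphism is in hand.
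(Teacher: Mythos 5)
A preliminary remark: the paper itself does not prove this statement at all --- it is quoted verbatim from Heckenberger--Kolb \cite[Proposition 3.2]{heko06} --- so your proposal has to be measured against what is actually nontrivial in that source rather than against an in-paper argument.

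The parts of your sketch that you call routine are indeed routine and correctly identified: the exchange relation for the matrix coefficients follows from coquasitriangularity of $\CqG$, the element $\genc$ maps to $\sum_i S(u^\lambda_{N i})\, u^\lambda_{i N} = 1$ by the antipode identity, the map descends to $\alg$ and restricts to degree zero, and surjectivity follows from the generation statement. (One small omission: you should check that \emph{all} degree-zero elements, not just the $z^{i j}$, land in $\Cflag$; this holds because each generator is a $K_\mu$-eigenvector with characters cancelling in degree zero and is annihilated by $E_i, F_i$ for $i \in S$.) The genuine gap is injectivity, which is precisely the content of the cited result. Your ``cleaner'' alternative via Takeuchi does not work as stated: Takeuchi's equivalence, in the form used in the paper, is an equivalence between relative Hopf modules over a coideal subalgebra $\calB \subset \calA$ and $\calC$-comodules; to assert that ``$\subalg$ and $\Cflag$ are $\Psi$ of the same $\calC$-comodule'' you would already need to know that $\subalg$ embeds in $\CqG$ with image the coideal subalgebra $\Cflag$, which is exactly the injectivity in question --- the argument is circular. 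That leaves the Hilbert-series/flat-deformation route, which is the right idea but is where all the work lies: granting the isomorphisms $\algP \cong \SGP$ and $\algM \cong \SGPop$, what must be shown is that the degree-zero part of the kernel of $\algC \to \CqG$ is generated by $\genc - 1$ and nothing more, equivalently that the filtered pieces of $\subalg$ (filtered by the number of $\genf$'s) have the same dimensions as the corresponding pieces of $\Cflag$. The latter count requires the Peter--Weyl-type description of $\Cflag = \CqG^{\UqlS}$ and a comparison with the pieces coming from $V(k\lambda)^* \otimes V(k\lambda)$; in Heckenberger--Kolb this rests on their earlier analysis in \cite{heko04}. As written, your proposal asserts this dimension count rather than proving it, so the injectivity step is not yet a proof.
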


\subsection{*-structure}

The quantum flag manifolds $\Cflag$ are naturally $*$-algebras, since they are defined by invariance with respect to the Hopf $*$-algebras $\UqlS$.
This $*$-structure can be transported to the algebras $\alg$ and $\subalg$.
However we will introduce a $*$-structure on $\alg$ from scratch, as a warm-up to the case of the $*$-calculi to be discussed in the next section.

To make our life easier, we will assume from now on that the basis $\{v_i\}_{i = 1}^N$ of $V(\lambda)$ is orthonormal with respect to a $\Uqg$-invariant inner product.

\begin{proposition}
The algebras $\alg$ and $\subalg$ become $*$-algebras upon setting $\genf^{i *} = \genv^i$.
\end{proposition}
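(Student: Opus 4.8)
The plan is to verify directly that the assignment $\genf^{i*} = \genv^i$ (and hence $\genv^{i*} = \genf^i$) extends to a well-defined conjugate-linear anti-automorphism of $\alg$, and then to check compatibility with the central element $\genc$ so that it descends to $\alg$ and restricts to $\subalg$. Since $\alg_\mathbb{C} = \algP \otimes \algM$ is presented by generators and relations, it suffices to show that applying $*$ (defined on generators as above, extended as a conjugate-linear anti-homomorphism) sends each defining relation to a consequence of the defining relations. There are three families of relations to check: the quadratic relations of $\algP$, those of $\algM$, and the cross relations between the $\genv$'s and the $\genf$'s.

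First I would set up the $R$-matrix bookkeeping. The key input is how the braiding matrices for $V$, $V^*$ and their mixtures are related under the $\Uqg$-invariant inner product. Using that $\{v_i\}$ is orthonormal and $\{f_i\}$ is the dual basis, and that the braiding is a $\Uqg$-module map determined by its value on highest-weight$\otimes$lowest-weight vectors, one gets identities of the schematic form $\overline{(\RVV)^{ij}_{kl}} = (\RVdVd)^{?}_{?}$ and $\overline{(\RVVd)^{ij}_{kl}} = (\RVdV)^{?}_{?}$, obtained by taking adjoints of the braiding operators with respect to the induced inner products on tensor products. Concretely, $\hat R_{V,W}^* = \hat R_{W,V}$ (or its inverse, depending on conventions) as operators on the relevant Hilbert spaces, and reading this off in the orthonormal bases converts complex conjugation of matrix entries into a relabeling of indices. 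I expect these identities are exactly the ones collected in the appendix sections (\cref{sec:braiding}, \cref{sec:braidingcalc}) referred to in the introduction, so I would cite them rather than re-derive.

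With those in hand, the verification is mechanical: apply $*$ to $\genf^i \genf^j = q^{-(\lambda,\lambda)}\sum_{k,l}(\RVV)^{ij}_{kl}\genf^k\genf^l$. The left side becomes $\genv^j \genv^i$ (anti-homomorphism), the right side becomes $q^{-(\lambda,\lambda)}\sum_{k,l}\overline{(\RVV)^{ij}_{kl}}\,\genv^l\genv^k$; using the conjugation identity this should match the defining relation of $\algM$ after reindexing, keeping in mind $(\lambda,\lambda)$ is real so $q^{-(\lambda,\lambda)}$ is fixed by conjugation. The $\algM$ relations map to the $\algP$ relations by the same computation run backwards. For the cross relations $\genv^i\genf^j = q^{(\lambda,\lambda)}\sum_{k,l}(\RVVd)^{ij}_{kl}\genf^k\genv^l$, applying $*$ gives $\genv^j\genf^i$ (up to the factor and conjugated $R$-matrix) on one side and a $\genv\genf$ expression on the other; here one needs the $\overline{\RVVd} \leftrightarrow \RVdV$ identity together with, possibly, the hexagon/quantum Yang–Baxter relation to rewrite it back into the standard form of the cross relation. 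Finally, $\genc^* = \big(\sum_i \genv^i\genf^i\big)^* = \sum_i \genf^{i*}{}^*\genv^{i*}{}^* $ — more carefully, $\genc^* = \sum_i (\genf^i)^* (\genv^i)^* = \sum_i \genv^i \genf^i$ wait, anti-homomorphism gives $\genc^* = \sum_i (\genf^i)^{*}$ applied in reverse order $= \sum_i \genv^i\genf^i$ up to moving the sum; one checks $\genc^* = \genc$ using that $\genc$ is central and invariant (an invariant element of a $*$-module algebra that is unique up to scalar must be fixed up to phase, and one normalizes), so $*$ descends to $\alg = \alg_\mathbb{C}/\langle \genc - 1\rangle$ and then restricts to $\subalg = \alg^0$, which is visibly $*$-stable since $*$ preserves the $\mathbb{Z}$-grading reversed, hence preserves degree zero.

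The main obstacle is the cross-relation check: unlike the two "diagonal" relations, which are nearly symmetric under $\genf \leftrightarrow \genv$, the mixed relation involves $\RVVd$ on one side and a priori $\RVdV$ after conjugation, and these are genuinely different operators. Showing that the conjugated relation is equivalent to the original one will require the precise relation between $\RVVd$, $\RVdV$, and their inverses under the inner product, plus possibly a braid relation to move an $R$-matrix past the reindexing. That is precisely the computation I would offload to the braiding appendix. Everything else — conjugate-linearity, the anti-homomorphism property on words, well-definedness of the quotient by $\langle \genc - 1\rangle$, and passage to $\subalg$ — is routine once the generators-and-relations check goes through.
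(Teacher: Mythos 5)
Your proposal is correct and takes essentially the same route as the paper: verify that $*$ preserves the defining relations of $\algC$ via the conjugation identities for the braiding components established in \cref{sec:braiding} (which require the orthonormal basis), then use $\genc^* = \genc$ to descend to $\alg$ and restrict to the degree-zero part $\subalg$. The only refinement worth noting is that for the cross-relation the paper's key input is the self-conjugacy $\overline{(\hat{R}_{V, V^*})^{k l}_{i j}} = (\hat{R}_{V, V^*})^{l k}_{j i}$ of \cref{prop:braiding-components}, obtained from $\hat{R}_{V,W}^* = \hat{R}_{W,V}$ together with the dual and double-dual identities (accounting for the non-orthonormal dual basis), so no Yang--Baxter step is needed, and $\genc^* = \genc$ is a one-line direct computation rather than an invariance-plus-normalization argument.
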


\begin{proof}
First we check that we obtain a $*$-structure on $\algC$ in this way.
It suffices to check that the relations of $\algC$ are preserved under $*$. We compute
\[
\begin{split}
(\genf^j \genf^i)^* & = \genv^i \genv^j = q^{-(\lambda, \lambda)} \sum_{k, l} (\hat{R}_{V^*, V^*})^{i j}_{k l} \genv^k \genv^l \\
& = q^{-(\lambda, \lambda)} \sum_{k, l} \overline{(\hat{R}_{V, V})^{j i}_{l k}} \genv^k \genv^l = \left( q^{-(\lambda, \lambda)} \sum_{k, l} (\hat{R}_{V, V})^{j i}_{l k} \genf^l \genf^k \right)^*,
\end{split}
\]
where we have used the first identity of \cref{prop:braiding-components}, which we note requires the use of the orthonormal basis $\{v_i\}_{i = 1}^N$.
Similarly for the relation between the $\genv^i$ generators.

Next we look at the cross-relations. We compute
\[
\begin{split}
(\genv^j \genf^i)^* & = \genv^i \genf^j = q^{(\lambda, \lambda)} \sum_{k, l} (\hat{R}_{V, V^*})^{i j}_{k l} \genf^k \genv^l \\
& = q^{(\lambda, \lambda)} \sum_{k, l} \overline{(\hat{R}_{V, V^*})^{j i}_{l k}} \genf^k \genv^l = \left( q^{(\lambda, \lambda)} \sum_{k, l} (\hat{R}_{V, V^*})^{j i}_{l k} \genf^l \genv^k \right)^*,
\end{split}
\]
where we have used the second identity of \cref{prop:braiding-components}.
Finally this $*$-structure descends to the quotient $\alg = \alg_\mathbb{C} / \langle \genc - 1 \rangle$, since we have $\genc^* = \genc$.
\end{proof}

Now we show that this $*$-structure agrees with the one on $\Cflag$.

\begin{corollary}
\label{cor:star-iso}
The map $\genf^i \genv^j \mapsto z^{i j}$ is an isomorphism of $\Uqg$-module $*$-algebras.
\end{corollary}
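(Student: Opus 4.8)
The plan is to combine the two isomorphisms already at our disposal. By \cref{prop:iso-alg} the map $\genf^i \genv^j \mapsto z^{i j}$ is an isomorphism of $\Uqg$-module algebras $\subalg \cong \Cflag$, so the only thing left to verify is that it intertwines the two $*$-structures: the one on $\subalg$ coming from $\genf^{i *} = \genv^i$ (via the preceding proposition) and the one on $\Cflag$ inherited from $\CqG$. Since both $\subalg$ and $\Cflag$ are generated as algebras by the elements $z^{i j}$ (using \cite[Proposition 3.2]{heko06}), and $*$ is an antilinear antihomomorphism on each side, it suffices to check the compatibility on the generators, i.e.\ to show that $(z^{i j})^* = z^{j i}$ in $\Cflag$, matching $(\genf^i \genv^j)^* = \genv^{j *} \genf^{i *} = \genf^j \genv^i$.

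First I would recall that under the $\Uqg$-module algebra isomorphisms $\SGP \cong \algP$ and $\SGPop \cong \algM$ the generators correspond as $\genf^i \mapsto c^\lambda_{f_i, v_N}$ and $\genv^i \mapsto c^{- w_0 \lambda}_{v_i, f_N}$, so that $z^{i j} = c^\lambda_{f_i, v_N}\, c^{- w_0 \lambda}_{v_j, f_N}$. Using that $*$ reverses order, I compute $(z^{i j})^* = (c^{- w_0 \lambda}_{v_j, f_N})^*\, (c^\lambda_{f_i, v_N})^*$ and then evaluate each factor with the formula $(u^V_{k l})^* = S(u^V_{l k})$, after identifying the relevant matrix coefficients as $u^V_{k l}$ with respect to the orthonormal basis $\{v_i\}$ (and the induced dual bases). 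The key input here is exactly the orthonormality assumption already imposed: it is what allows $f_i = (v_i, \cdot)$ and makes $c^\lambda_{f_i, v_N} = u^\lambda_{i N}$, with the analogous statement for $V(\lambda)^* \cong V(-w_0\lambda)$ and its dual. Applying $(u^\lambda_{i N})^* = S(u^\lambda_{N i})$ and the corresponding identity for the $V(-w_0\lambda)$-coefficients, and using that $S$ is an anti-coalgebra map together with $V(\lambda)^{**} \cong V(\lambda)$ as modules (the identification $X \cdot v = S^2(X) v$ recalled in the text), the product rearranges back into $c^\lambda_{f_j, v_N}\, c^{- w_0 \lambda}_{v_i, f_N} = z^{j i}$.

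The main obstacle I expect is purely bookkeeping: keeping straight the four modules $V(\lambda)$, $V(\lambda)^*$, $V(\lambda)^{**}$, and $V(-w_0\lambda)$, their chosen bases, and the way $S$, $*$, and the double-dual identification interact under these identifications — in particular making sure the highest/lowest weight conventions ($v_N$ highest in $V(\lambda)$, $f_N$ the corresponding dual element) are respected when the antipode swaps roles. There is no hard conceptual content; once the generators are matched the result follows because a $*$-algebra structure on an algebra with a fixed generating set is determined by its values on the generators, so an algebra isomorphism that sends generators to generators compatibly with $*$ is automatically a $*$-isomorphism, and the $\Uqg$-module structure was already handled in \cref{prop:iso-alg}.
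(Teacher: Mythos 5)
Your proposal is correct and follows essentially the paper's own route: reduce the claim to $(z^{ij})^* = z^{ji}$ on the generators and verify it by matrix-coefficient manipulations using the orthonormality of $\{v_i\}$ and the identity $(u^V_{ij})^* = S(u^V_{ji})$. One bookkeeping point to watch (which the paper sidesteps by first proving $S(c^V_{f,v}) = c^{V^*}_{\widetilde{v},f}$ and rewriting $z^{ij} = u^\lambda_{iN}(u^\lambda_{jN})^*$, so that $*$ never has to be applied to a $V(\lambda)^*$-coefficient): the induced dual basis $\{f_i\}$ of $V(\lambda)^*$ is \emph{not} orthonormal for the invariant inner product, since $(f_i,f_j)_{V^*} = \delta_{ij}\, q^{(2\rho,\wt v_i)}$, so your factor-by-factor application of $(u^{V^*}_{kl})^* = S(u^{V^*}_{lk})$ to $c^{-w_0\lambda}_{v_j,f_N}$ produces extra $q$-powers that only disappear after the $S^2(u_{ij}) = q^{(2\rho,\wt v_i - \wt v_j)} u_{ij}$ correction coming from the double-dual identification --- they do cancel, so your argument goes through as sketched.
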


\begin{proof}
We have $(\genf^i \genv^j)^* = \genf^j \genv^i$, hence it suffices to show that $(z^{i j})^* = z^{j i}$.
First we claim that $S(c^V_{f, v}) = c^{V^*}_{\widetilde{v}, f}$, where we use the notation $\widetilde{v}(f) = f(v)$. Indeed we compute
\[
S(c^V_{f, v})(X) = c^V_{f, v}(S(X)) = f(S(X) v) = (X f)(v) = \widetilde{v}(X f) = c^{V^*}_{\widetilde{v}, f}(X).
\]
Hence $z^{i j} = c^\lambda_{f_i, v_N} S(c^\lambda_{f_N, v_j})$. Moreover, since $\{v_i\}_{i = 1}^N$ is an orthonormal basis, we can write $u^\lambda_{i j} = c^\lambda_{f_i, v_j}$ and use $(u^V_{i j})^* = S(u^V_{j i})$. Therefore $z^{i j} = u^\lambda_{i N} (u^\lambda_{j N})^*$, which shows $(z^{i j})^* = z^{j i}$.
\end{proof}

\section{Heckenberger-Kolb calculi and $*$-structures}
\label{sec:heko}

In this section we will consider the Heckenberger-Kolb calculus over $\subalg \cong \Cflag$, where $\flagman$ is an irreducible flag manifold.
We will show that this calculus is naturally a $*$-calculus, where the $*$-structure on $\subalg \subset \alg$ is the one introduced in the previous section.
After giving a brief presentation of the Heckenberger-Kolb calculi, we give the necessary definitions for the two FODCs $\calcDel$ and $\calcDelb$, whose direct sum gives the FODC $\calc$.
To show that $\calc$ is a $*$-calculus we will check that the relations are compatible with the $*$-structure of $\subalg$.

\subsection{The calculi in brief}

From this point on we will restrict to the case of \emph{irreducible} flag manifolds $\flagman$.
These spaces can be characterized by the following condition: we have $S = \Pi \backslash \{ \alpha_s \}$ and the simple root $\alpha_s$ has multiplicity $1$ in the highest root of $\lieg$.
In the following the index $s$ will always be associated to the simple root $\alpha_s$ removed from the set $S$.
Observe that in this case we have $\lambda = \omega_s$, where $\lambda$ is defined as in \cref{sec:flag}.

In the paper \cite{heko04}, Heckenberger and Kolb show that there exist exactly two non-isomorphic covariant FODCs over $\Cflag$. We denote them by $\calcDel$ and $\calcDelb$, as they classically correspond to the holomorphic and anti-holomorphic calculi on the complex manifold $\flagman$, and write $\calc = \calcDel \oplus \calcDelb$ for their direct sum.
In the follow-up paper \cite{heko06}, they investigate the universal differential calculi $\calcUn$ built from $\calc$.
They show that these calculi have classical dimensions and have many of the features of the classical calculi over $\flagman$.

Before giving the details, let us first outline the main steps of this construction.
First we define a FODC $\calcP$ over $\algP$. Then using $\calcP$ we construct a FODC $\calcPC$ over $\algC$.
By taking an appropriate quotient, we obtain a FODC $\calcPC / \Lambda_+$ over $\alg$.
Finally the calculus $\calcDel$ over $\subalg$ is simply the calculus induced by $\calcPC / \Lambda_+$ over $\alg$.
A similar construction gives $\calcDelb$ starting from $\algM$.
Hence we obtain the FODC $\calc$ over $\subalg$ as the direct sum of $\calcDel$ and $\calcDelb$.

Therefore we get the universal differential calculi $\calcUn$, which we will also denote by $\Omega^\bullet$.
Hence $\diff = \del + \delbar$ is a differential and we have the relation $\del \circ \delbar = - \delbar \circ \del$.

\subsection{The FODC $\calcDel$}

First we present the construction of the FODC $\calcDel$ over $\subalg$.
We start from the left $\algP$-module $\calcP$ generated by the elements $\{ \diff \genf^i \}_{i = 1}^N$ and the relations
\begin{equation}
\label{eq:GammaP-rels}
\sum_{i, j} (\hat{P}_V \hat{Q}_V)^{k l}_{i j} \genf^i \diff \genf^j = 0.
\end{equation}
Here we make use of the notations
\[
\hat{P}_V := \hat{R}_{V, V} - q^{(\omega_s, \omega_s)} \id, \quad
\hat{Q}_V := \hat{R}_{V, V} + q^{(\omega_s, \omega_s) - (\alpha_s, \alpha_s)} \id.
\]
We can make $\calcP$ into an $\algP$-bimodule by setting
\begin{equation}
\label{eq:GammaP-bimod}
(\diff \genf^i) \genf^j = q^{(\alpha_s, \alpha_s) - (\omega_s, \omega_s)} \sum_{k, l} (\hat{R}_{V, V})^{i j}_{k l} \genf^k \diff \genf^l.
\end{equation}

Next we consider the left $\algC$-module $\calcPC$ defined by
\[
\calcPC := \algC \otimes_{\algP} \calcP.
\]
It becomes an $\algC$-bimodule by setting
\begin{equation}
\label{eq:GammaPC-bimod}
(\diff \genf^i) \genv^j = q^{- (\omega_s, \omega_s)} \sum_{k, l} (\hat{R}^{-1}_{V, V^*})^{i j}_{k l} \genv^k \diff \genf^l.
\end{equation}
We have a differential $\del : \algC \to \calcPC$ given by
\[
\del(\genf^i) = \diff \genf^i, \quad \del(\genv^i) = 0.
\]
In the following we will always write $\diff \genf^i = \del(\genf^i)$.

Let $\Lambda_+ \subset \calcPC$ be the sub-bimodule generated by $\del(\genc)$, $(\genc - 1) \calcPC$ and $\calcPC (\genc - 1)$.
Then the quotient $\calcPC / \Lambda_+$ is a FODC over $\alg$. Finally $\calcDel$ is the FODC over $\subalg$ induced by $\calcPC / \Lambda_+$.

\subsection{The FODC $\calcDelb$}

Next we introduce the second covariant FODC $(\calcDelb, \delbar)$ over $\alg$.
Consider the left $\algM$-module $\calcM$ generated by the elements $\{ \diff \genv^i \}_{i = 1}^N$ and the relations
\begin{equation}
\label{eq:GammaM-rels}
\sum_{i, j} (\hat{P}_{V^*} \hat{Q}_{V^*})^{k l}_{i j} \genv^i \diff \genv^j = 0.
\end{equation}
Here we are using the notations
\[
\hat{P}_{V^*} := \hat{R}_{V^*, V^*} - q^{(\omega_s, \omega_s)} \id, \quad
\hat{Q}_{V^*} := \hat{R}_{V^*, V^*} + q^{(\omega_s, \omega_s) - (\alpha_s, \alpha_s)} \id.
\]
We turn the left $\algM$-module $\calcM$ into an $\algM$-bimodule by
\begin{equation}
\label{eq:GammaM-bimod}
(\diff \genv^i) \genv^j = q^{(\omega_s, \omega_s) - (\alpha_s, \alpha_s)} \sum_{k, l} (\hat{R}^{-1}_{V^*, V^*})^{i j}_{k l} \genv^k \diff \genv^l.
\end{equation}

Next we consider the left $\algC$-module $\calcMC$ defined by
\[
\calcMC := \algC \otimes_{\algM} \calcM.
\]
It becomes an $\algC$-bimodule by setting
\begin{equation}
\label{eq:GammaMC-bimod}
(\diff \genv^i) \genf^j = q^{(\omega_s, \omega_s)} \sum_{k, l} (\hat{R}_{V, V^*})^{i j}_{k l} \genf^k \diff \genv^l.
\end{equation}
We have a differential $\delbar : \algC \to \calcMC$ given by
\[
\delbar(\genf^i) = 0, \quad \delbar(\genv^i) = \diff \genv^i.
\]
In the following we will always write $\diff \genv^i = \delbar(\genv^i)$.

Let $\Lambda_- \subset \calcMC$ be the sub-bimodule generated by $\del(\genc)$, $(\genc - 1) \calcMC$ and $\calcMC (\genc - 1)$.
Then the quotient $\calcMC / \Lambda_-$ is a FODC over $\alg$. Finally $\calcDelb$ is the FODC over $\subalg$ induced by $\calcMC / \Lambda_-$.

\subsection{Differential *-calculus}

We will now investigate whether the FODC $\calc$ over the $*$-algebra $\subalg$ can be made into a differential $*$-calculus.
We start with a simple lemma.

\begin{lemma}
\label{lem:equivalent-Gamma}
The relations \eqref{eq:GammaP-rels}, \eqref{eq:GammaP-bimod} and \eqref{eq:GammaPC-bimod} in $\calcPC$ are equivalent to
\[
\begin{gathered}
\sum_{i, j} (\hat{P}_V \hat{Q}_V )^{k l}_{i j} \del(\genf^i) \genf^j = 0, \\
\genf^i \del(\genf^j) = q^{(\omega_s, \omega_s) - (\alpha_s, \alpha_s)} \sum_{k, l} (\hat{R}^{-1}_{V, V})^{i j}_{k l} \del(\genf^k) \genf^l, \\
\genv^i \del(\genf^j) = q^{(\omega_s, \omega_s)} \sum_{k, l} (\hat{R}_{V, V^*})^{i j}_{k l} \del(\genf^k) \genv^l.
\end{gathered}
\]
Similarly the relations \eqref{eq:GammaM-rels}, \eqref{eq:GammaM-bimod} and \eqref{eq:GammaMC-bimod} in $\calcMC$ are equivalent to
\[
\begin{gathered}
\sum_{i, j} (\hat{P}_{V^*} \hat{Q}_{V^*})^{k l}_{i j} \delbar(\genv^i) \genv^j = 0, \\
\genv^i \delbar(\genv^j) = q^{(\alpha_s, \alpha_s) - (\omega_s, \omega_s)} \sum_{k, l} (\hat{R}_{V^*, V^*})^{i j}_{k l} \delbar(\genv^k) \genv^l, \\
\genf^i \delbar(\genv^j) = q^{-(\omega_s, \omega_s)} \sum_{k, l} (\hat{R}^{-1}_{V, V^*})^{i j}_{k l} \delbar(\genv^k) \genf^l.
\end{gathered}
\]
\end{lemma}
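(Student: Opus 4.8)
The plan is to verify the claimed equivalences purely by algebraic manipulation of the $\hat{R}$-matrices, using the braid relations and the fact that $\hat{R}_{V,V}$ (and $\hat{R}_{V^*,V^*}$, $\hat{R}_{V,V^*}$) are invertible $\Uqg$-module isomorphisms. I will treat the $\calcPC$ case in detail; the $\calcMC$ case is entirely parallel, obtained by the substitutions $V \leftrightarrow V^*$, $\genf \leftrightarrow \genv$, $\del \leftrightarrow \delbar$ and the corresponding exchange of exponents $(\omega_s,\omega_s) \leftrightarrow (\omega_s,\omega_s) - (\alpha_s,\alpha_s)$.

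\textbf{Step 1: the first relation.} The relation $\sum_{i,j}(\hat{P}_V\hat{Q}_V)^{kl}_{ij}\genf^i\diff\genf^j = 0$ in \eqref{eq:GammaP-rels} is literally the first displayed relation once we recall the convention $\diff\genf^j = \del(\genf^j)$, so here there is nothing to prove; the only subtlety is the placement of $\genf^i$ on the left versus $\del(\genf^j)$ on the left, which is addressed in Step 2.

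\textbf{Step 2: the bimodule relation.} The target relation $\genf^i\del(\genf^j) = q^{(\omega_s,\omega_s)-(\alpha_s,\alpha_s)}\sum_{k,l}(\hat{R}^{-1}_{V,V})^{ij}_{kl}\del(\genf^k)\genf^l$ should follow from \eqref{eq:GammaP-bimod} by applying $\hat{R}^{-1}_{V,V}$ to both sides and relabelling indices. Concretely, \eqref{eq:GammaP-bimod} reads $(\diff\genf^i)\genf^j = q^{(\alpha_s,\alpha_s)-(\omega_s,\omega_s)}\sum_{k,l}(\hat{R}_{V,V})^{ij}_{kl}\genf^k\diff\genf^l$; inverting the braiding and using $\sum_{m,n}(\hat{R}^{-1}_{V,V})^{ij}_{mn}(\hat{R}_{V,V})^{mn}_{kl} = \delta^i_k\delta^j_l$ gives the claim. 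One must be slightly careful about whether the first relation \eqref{eq:GammaP-rels} is needed to make this inversion well-defined on the quotient module; I expect it is not, since \eqref{eq:GammaP-bimod} is already an identity in the free bimodule before quotienting. The relation \eqref{eq:GammaPC-bimod} likewise transforms into the third displayed relation $\genv^i\del(\genf^j) = q^{(\omega_s,\omega_s)}\sum_{k,l}(\hat{R}_{V,V^*})^{ij}_{kl}\del(\genf^k)\genv^l$ by applying $\hat{R}_{V,V^*}$ to \eqref{eq:GammaPC-bimod}, using that $\hat{R}^{-1}_{V,V^*}$ and $\hat{R}_{V,V^*}$ are mutually inverse and relabelling.

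\textbf{Step 3: conversely.} Each of the three target relations implies its counterpart among \eqref{eq:GammaP-rels}--\eqref{eq:GammaPC-bimod} by running the same invertible linear maps in the opposite direction, so the two sets of relations generate the same submodule. Since all the maps involved ($\hat{R}_{V,V}^{\pm 1}$, $\hat{R}_{V,V^*}^{\pm 1}$, $\hat{P}_V$, $\hat{Q}_V$) are fixed linear isomorphisms independent of the module structure, equivalence of the relation sets is immediate once each individual correspondence is checked.

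\textbf{Main obstacle.} The only genuinely delicate point is bookkeeping: keeping the order of the two tensor factors, the up/down index positions, and the powers of $q$ consistent when passing between $\hat{R}$ and $\hat{R}^{-1}$, particularly in the mixed relation \eqref{eq:GammaPC-bimod} where $\hat{R}_{V,V^*}$ rather than $\hat{R}_{V,V}$ appears. I do not anticipate any conceptual difficulty, nor any need for the orthonormal-basis assumption here (unlike in the $*$-structure computations); this lemma is purely a restatement of the defining relations in a form convenient for the subsequent $*$-compatibility check.
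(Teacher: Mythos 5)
There is a genuine gap, and it sits exactly at the one point of the lemma that is not routine. Your Step 2 is fine and matches the easy part: the bimodule relations \eqref{eq:GammaP-bimod} and \eqref{eq:GammaPC-bimod} are converted into the second and third displayed identities simply by applying $\hat{R}^{-1}_{V,V}$, respectively $\hat{R}_{V,V^*}$, and relabelling, and this is reversible. But in Step 1 you assert that \eqref{eq:GammaP-rels} ``is literally the first displayed relation,'' with the placement of $\genf^i$ versus $\del(\genf^j)$ ``addressed in Step 2.'' It is not literally the same relation: \eqref{eq:GammaP-rels} reads $\sum_{i,j}(\hat{P}_V\hat{Q}_V)^{kl}_{ij}\,\genf^i\,\del(\genf^j)=0$, whereas the target is $\sum_{i,j}(\hat{P}_V\hat{Q}_V)^{kl}_{ij}\,\del(\genf^i)\,\genf^j=0$, and Step 2 only tells you how to commute a single $\genf$ past a single $\del(\genf)$; it does not by itself show that the two quadratic relations with the \emph{same} coefficient matrix $\hat{P}_V\hat{Q}_V$ are equivalent. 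If you substitute the reordering identity of Step 2 into \eqref{eq:GammaP-rels}, what you actually obtain is
\[
q^{(\omega_s, \omega_s) - (\alpha_s, \alpha_s)} \sum_{i, j} \bigl(\hat{P}_V \hat{Q}_V \hat{R}^{-1}_{V, V}\bigr)^{k l}_{i j} \del(\genf^i)\, \genf^j = 0,
\]
whose coefficient matrix is $\hat{P}_V\hat{Q}_V\hat{R}^{-1}_{V,V}$, not $\hat{P}_V\hat{Q}_V$. To finish you need the extra observation that $\hat{P}_V$ and $\hat{Q}_V$ are polynomials in $\hat{R}_{V,V}$, hence $\hat{P}_V\hat{Q}_V\hat{R}^{-1}_{V,V}=\hat{R}^{-1}_{V,V}\hat{P}_V\hat{Q}_V$, so that multiplying on the left by the invertible matrix $\hat{R}_{V,V}$ recovers exactly the stated relation (and the same argument run backwards gives the converse). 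This commutation-plus-invertibility step is precisely what the paper's proof supplies and what your write-up omits; the analogous point must also be made for \eqref{eq:GammaM-rels} with $\hat{P}_{V^*}\hat{Q}_{V^*}$ and $\hat{R}_{V^*,V^*}$. Your remaining remarks (Step 3, and that no orthonormal-basis assumption is needed here) are correct.
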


\begin{proof}
Most of the identities follow straightforwardly by applying the inverse of the appropriate braiding.
The only non-trivial identities are those following from \eqref{eq:GammaP-rels} and \eqref{eq:GammaM-rels}.
Let us consider the first one. Plugging the identity for $\genf^i \del(\genf^j)$ into \eqref{eq:GammaP-rels} we obtain
\[
q^{(\omega_s, \omega_s) - (\alpha_s, \alpha_s)} \sum_{i, j} (\hat{P}_V \hat{Q}_V \hat{R}^{-1}_{V, V})^{k l}_{i j} \del(\genf^i) \genf^j = 0.
\]
Then using the relation $\hat{P}_V \hat{Q}_V \hat{R}^{-1}_{V, V} = \hat{R}^{-1}_{V, V} \hat{P}_V \hat{Q}_V$ and multiplying on the left by $\hat{R}_{V, V}$ we obtain the result.
The second identity is obtained similarly.
\end{proof}

We are now ready to prove the main result of this section.

\begin{theorem}
\label{thm:star-calculi}
The differential calculus $(\calcUn, \diff)$ over $\subalg$ is a differential $*$-calculus.
\end{theorem}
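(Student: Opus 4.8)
The strategy is to define a candidate $*$-structure on the FODCs $\calcP$ and $\calcM$ (equivalently on $\calcPC$ and $\calcMC$) by declaring $\del(\genf^i)^* = \delbar(\genv^i)$ and $\delbar(\genv^i)^* = \del(\genf^i)$, in parallel with the assignment $\genf^{i*} = \genv^i$ on $\subalg$; in other words, the $*$-operation should swap the holomorphic calculus $\calcDel$ with the antiholomorphic one $\calcDelb$. Since $\calcUn$ is the universal differential calculus built from $\calc = \calcDel \oplus \calcDelb$, and the $*$-structure on a $*$-FODC lifts canonically to the universal differential calculus (as recalled from \cite[Chapter 12, Proposition 4]{klsc97}), it suffices to prove that $\calc$ is a $*$-FODC over $\subalg$ with $\diff(a)^* = \diff(a^*)$. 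Because $\calc$ is the direct sum of the two induced FODCs, and the $*$-operation interchanges the two summands, everything reduces to checking that the swap map is well-defined and compatible with the bimodule structure and the differential.

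\textbf{Key steps.} First I would verify that the map sending $\del(\genf^i) \mapsto \delbar(\genv^i)$ extends conjugate-linearly to an involution $\calcDel \to \calcDelb$ of abelian groups; this means showing that the defining relations of $\calcPC$ (in the form given in \cref{lem:equivalent-Gamma}) are sent to the defining relations of $\calcMC$ under $*$. Concretely, applying $*$ to the first relation of \cref{lem:equivalent-Gamma} for $\calcPC$, one uses $\del(\genf^i)^* = \delbar(\genv^i)$, $\genf^{j*} = \genv^j$, and the complex conjugates of the matrix coefficients of $\hat P_V \hat Q_V$ to land on the first relation for $\calcMC$; this requires the orthonormal-basis identity $\overline{(\hat R_{V,V})^{ab}_{cd}} = (\hat R_{V^*,V^*})^{dc}_{ba}$ from \cref{prop:braiding-components}, together with the observation that $\hat P_{V^*}, \hat Q_{V^*}$ are built from $\hat R_{V^*,V^*}$ with the same scalars as $\hat P_V, \hat Q_V$ from $\hat R_{V,V}$. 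The bimodule relations (the second and third in each block of \cref{lem:equivalent-Gamma}) are handled the same way: under $*$ the second $\calcPC$-relation should match the second $\calcMC$-relation, and the third $\calcPC$-relation (involving $\RVVd$) should match the third $\calcMC$-relation (involving $\hat R^{-1}_{V,V^*}$), using the relevant conjugation identities for the mixed braiding from \cref{prop:braiding-components} and the compatibility $\overline{(\hat R^{-1})} = (\bar{\hat R})^{-1}$. Second, I would check that the involution descends through the quotients by $\Lambda_+$ and $\Lambda_-$: since $\genc^* = \genc$ and $\del(\genc)^* $ should equal $\delbar(\genc)$ (a short computation using the Leibniz rule and $\genf^{i*} = \genv^i$), the generating elements of $\Lambda_+$ are swapped with those of $\Lambda_-$, hence the involution passes to $\calcPC/\Lambda_+ \leftrightarrow \calcMC/\Lambda_-$ and restricts to the induced calculi over $\subalg$. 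Third, I would confirm the compatibility with the differential: on generators, $\diff(z^{ij})^* = (\del + \delbar)(\genf^i\genv^j)^* = (\del+\delbar)(\genf^j\genv^i) = \diff((z^{ij})^*)$, using that $*$ swaps $\del$ and $\delbar$ and that $\del(\genf^j\genv^i)^* = \delbar((\genf^j\genv^i)^*)$ follows from the Leibniz rule and the generator-level identities; since $\diff$ and $*$ are (anti)derivations, this extends to all of $\subalg$. Finally, the graded sign rule $(\omega\wedge\chi)^* = (-1)^{pq}\chi^*\wedge\omega^*$ comes for free from the canonical lift to $\calcUn$.

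\textbf{Main obstacle.} The computational heart of the argument is matching the relations of $\calcPC$ with those of $\calcMC$ under the conjugate-linear swap, and in particular keeping track of the various powers of $q$ and of the braiding identities relating $\hat R_{V,V}$, $\hat R_{V^*,V^*}$, $\hat R_{V,V^*}$ and their inverses under complex conjugation in the orthonormal basis. This is exactly where \cref{prop:braiding-components} (proved later in the braiding sections) is indispensable; the conceptual structure --- $*$ interchanges $\del \leftrightarrow \delbar$ and $\calcDel \leftrightarrow \calcDelb$ --- is clean, but one must be careful that the scalar prefactors in \eqref{eq:GammaPC-bimod} and \eqref{eq:GammaMC-bimod} (which are \emph{not} simply related by $q \mapsto q^{-1}$) conspire correctly with the conjugated braiding coefficients. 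A secondary point requiring care is the well-definedness of $\del(\genc)^* = \delbar(\genc)$, since $\genc = \sum_i \genv^i\genf^i$ is not symmetric in form under the swap and one must use the exchange relation in $\algC$ to rewrite it before differentiating.
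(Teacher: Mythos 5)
Your proposal is correct and follows essentially the same route as the paper: reduce to showing that the FODC $\calc$ is a $*$-FODC (the lift to $\calcUn$ being canonical), define the involution by $\del(\genf^i)^* = \delbar(\genv^i)$, and verify that the relations of $\calcPC$ and $\calcMC$ (in the equivalent form of \cref{lem:equivalent-Gamma}) are exchanged using the conjugation identities of \cref{prop:braiding-components}, with $\Lambda_+ \leftrightarrow \Lambda_-$ following from $\genc^* = \genc$. The only minor simplification you missed is that $\del(\genc)^* = \delbar(\genc)$ is immediate from $\del(\genc) = \sum_i \genv^i \del(\genf^i)$ and $\delbar(\genc) = \sum_i \delbar(\genv^i)\genf^i$, so no rewriting via the exchange relations of $\algC$ is needed.
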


\begin{proof}
We have already mentioned that it suffices to show that the FODC $(\calc, \diff)$ is a $*$-FODC.
Consider the requirement $\diff(a)^* = \diff(a^*)$. Using $\genf^{i *} = \genv^i$, together with the relations $\diff(\genf^i) = \del(\genf^i)$ and $\diff(\genv^i) = \delbar(\genv^i)$, we immediately find $\del(\genf^i)^* = \delbar(\genv^i)$.
Now we have to check that the relations in $\Gamma$ are preserved by this candidate $*$-structure.

We start with the relations \eqref{eq:GammaP-rels} and \eqref{eq:GammaM-rels}.
Observe that $\overline{(\hat{P}_V)^{k l}_{i j}} = (\hat{P}_{V^*})^{l k}_{j i}$ and $\overline{(\hat{Q}_V)^{k l}_{i j}} = (\hat{Q}_{V^*})^{l k}_{j i}$, which follows from their definitions and \cref{prop:braiding-components}. Then we have
\[
\left( \sum_{i, j} (\hat{P}_V \hat{Q}_V)^{k l}_{i j} \genf^i \del(\genf^j) \right)^* = \sum_{i, j} (\hat{P}_{V^*} \hat{Q}_{V^*})^{l k}_{j i} \delbar(\genv^j) \genv^i = 0,
\]
where the last identity follows from \cref{lem:equivalent-Gamma}. Similarly for the relation \eqref{eq:GammaM-rels}.

Next consider the relations \eqref{eq:GammaP-bimod} and \eqref{eq:GammaM-bimod}. Using \cref{lem:equivalent-Gamma} we compute
\[
\begin{split}
(\del(\genf^i) \genf^j)^* & = \genv^j \delbar(\genv^i) = q^{(\alpha_s, \alpha_s) - (\omega_s, \omega_s)} \sum_{k, l} (\hat{R}_{V^*, V^*})^{j i}_{k l} \delbar(\genv^k) \genv^l \\
& = \left( q^{(\alpha_s, \alpha_s) - (\omega_s, \omega_s)} \sum_{k, l} (\hat{R}_{V, V})^{i j}_{l k} \genf^l \del(\genf^k) \right)^*,
\end{split}
\]
where we have used $\overline{(\hat{R}_{V, V})^{k l}_{i j}} = (\hat{R}_{V^*, V^*})^{l k}_{j i}$ from \cref{prop:braiding-components}.

Next let us consider \eqref{eq:GammaPC-bimod} and \eqref{eq:GammaMC-bimod}. Using \cref{lem:equivalent-Gamma} we compute
\[
\begin{split}
(\del(\genf^i) \genv^j)^* & = \genf^j \delbar(\genv^i) = q^{-(\omega_s, \omega_s)} \sum_{k, l} (\hat{R}^{-1}_{V, V^*})^{j i}_{k l} \delbar(\genv^k) \genf^l \\
& = \left( q^{-(\omega_s, \omega_s)} \sum_{k, l} (\hat{R}^{-1}_{V, V^*})^{i j}_{l k} \genv^l \del(\genf^k) \right)^*,
\end{split}
\]
where we have used $\overline{(\hat{R}^{-1}_{V, V^*})^{k l}_{i j}} = (\hat{R}^{-1}_{V, V^*})^{l k}_{j i}$ from \cref{prop:braiding-components}.

Finally we have to check that the sub-bimodules $\Lambda_+$ and $\Lambda_-$ are preserved under the $*$-structure. But this is clear, since we have $\genc^* = \genc$.
\end{proof}

\section{Hermitian and Kähler structures}
\label{sec:structures}

In this section we will show the existence of Hermitian and Kähler structures on the Heckenberger-Kolb calculi $\Omega^\bullet$ over quantum irreducible flag manifolds $\Cflag$.

\subsection{Some identities}

Recall the $\Uqg$-module algebra isomorphism $\subalg \cong \Cflag$ from \cref{prop:iso-alg}.
By a small abuse of notation, we will also write $z^{i j} = \genf^i \genv^j$ for the generators of the algebra $\subalg$.
We will now obtain some identities for these elements.

\begin{lemma}
We have $\sum_k z^{i k} z^{k j} = z^{i j}$ and $(z^{i j})^* = z^{j i}$.
\end{lemma}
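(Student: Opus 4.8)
The identity $(z^{ij})^* = z^{ji}$ has already been established in \cref{cor:star-iso}, so the only new content is the quadratic relation $\sum_k z^{ik} z^{kj} = z^{ij}$. The plan is to translate this into a statement about the generators $\genf^i, \genv^j$ of $\subalg$ using the identification $z^{ij} = \genf^i \genv^j$ and then use the exchange relations of $\algC$ together with the defining relation $\genc = 1$. Concretely, I would write
\[
\sum_k z^{ik} z^{kj} = \sum_k \genf^i \genv^k \genf^k \genv^j,
\]
and the goal becomes to show that $\sum_k \genv^k \genf^k$, sandwiched between $\genf^i$ and $\genv^j$, collapses to produce $\genf^i \genv^j = z^{ij}$.

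The key observation is that $\sum_k \genv^k \genf^k$ is \emph{almost} the central element $\genc = \sum_k \genv^k \genf^k$ — in fact it \emph{is} exactly $\genc$. Since $\genc$ is central in $\algC$ and we have passed to the quotient $\alg = \algC / \langle \genc - 1 \rangle$, in $\alg$ (and hence in $\subalg$) we simply have $\sum_k \genv^k \genf^k = \genc = 1$. Therefore
\[
\sum_k z^{ik} z^{kj} = \genf^i \Big( \sum_k \genv^k \genf^k \Big) \genv^j = \genf^i \genc \genv^j = \genf^i \genv^j = z^{ij}.
\]
For the second identity, I would simply invoke \cref{cor:star-iso}, which states that $\genf^i \genv^j \mapsto z^{ij}$ is an isomorphism of $\Uqg$-module $*$-algebras and in particular records $(z^{ij})^* = z^{ji}$; alternatively one notes directly that $(\genf^i \genv^j)^* = (\genv^j)^* (\genf^i)^* = \genf^j \genv^i = z^{ji}$ using $\genf^{i*} = \genv^i$.

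The step requiring the most care is making sure the bracketing $\genf^i \big( \sum_k \genv^k \genf^k \big) \genv^j$ is legitimate, i.e.\ that $\sum_k z^{ik}z^{kj}$, computed inside $\subalg$, really equals $\genf^i \genc \genv^j$ rather than some reordered expression; this is immediate because $z^{ik} z^{kj} = (\genf^i \genv^k)(\genf^k \genv^j)$ is literally the product in $\alg$ and associativity lets us regroup the middle factors. No braiding identities or $*$-structure computations are actually needed for the first identity — the whole point is that the element $\genc$ was \emph{designed} to be the quantum analogue of the defining equation of projective space, and setting $\genc = 1$ encodes exactly the classical relation $\sum_k z^{ik} z^{kj} = z^{ij}$ (idempotency of the rank-one projector). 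So the main "obstacle" is really just recognizing which structural fact to use; the calculation itself is a one-liner.
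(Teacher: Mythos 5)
Your proposal is correct and follows exactly the paper's own argument: the paper proves the first identity from $\sum_k \genv^k \genf^k = \genc = 1$ in the quotient $\alg$, and the second from $\genf^{i*} = \genv^i$, precisely as you do (your invocation of \cref{cor:star-iso} is an equivalent packaging of the same fact). No gaps; your version is simply a more detailed write-up of the same one-line computation.
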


\begin{proof}
The first follows from $\sum_i \genv^i \genf^i = 1$ while the second follows from $\genf^{i *} = \genv^i$.
\end{proof}

Next we consider some identities involving the differentials.

\begin{lemma}
\label{lem:del-delbar}
We have the identities
\[
\begin{gathered}
\sum_k z^{i k} \del(z^{k j}) = 0, \quad
\sum_k \del(z^{i k}) z^{k j} = \del(z^{i j}), \\
\sum_k z^{i k} \delbar(z^{k j}) = \delbar(z^{i j}), \quad
\sum_k \delbar(z^{i k}) z^{k j} = 0.
\end{gathered}
\]
\end{lemma}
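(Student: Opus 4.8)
The key observation is that the four identities in \cref{lem:del-delbar} should follow by differentiating the two relations established in the previous lemma, namely $\sum_k z^{ik} z^{kj} = z^{ij}$, combined with the vanishing relations for $\del$ and $\delbar$ contained in \cref{lem:equivalent-Gamma}. The plan is to work in $\alg$ with the identification $z^{ij} = \genf^i \genv^j$, so that $\del(z^{ij}) = \del(\genf^i \genv^j) = \del(\genf^i)\genv^j + \genf^i \del(\genv^j) = \del(\genf^i)\genv^j$, using $\del(\genv^j) = 0$; similarly $\delbar(z^{ij}) = \genf^i \delbar(\genv^j)$.

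First I would prove $\sum_k z^{ik}\del(z^{kj}) = 0$. Expanding, $\sum_k z^{ik}\del(z^{kj}) = \sum_k \genf^i \genv^k \del(\genf^k)\genv^j$. Now I would invoke the third relation of \cref{lem:equivalent-Gamma}, $\genv^k \del(\genf^k)$ summed appropriately; more precisely, $\sum_k \genv^k \del(\genf^k) = q^{(\omega_s,\omega_s)}\sum_{k,l,m}(\hat{R}_{V,V^*})^{k k}_{l m}\del(\genf^l)\genv^m$, and the point is that $\sum_k (\hat{R}_{V,V^*})^{k k}_{l m}$ is essentially the component expression for the evaluation/coevaluation pairing, which vanishes against $\del(\genf^l)\genv^m$ because $\sum_k \genv^k\genf^k = \genc$ is central and its image under $\del$ is controlled by $\del(\genc)$, which lies in $\Lambda_+$ and hence is zero in the calculus over $\alg$. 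So really $\sum_k \genv^k\del(\genf^k) = \del(\sum_k \genv^k\genf^k) - \sum_k\del(\genv^k)\genf^k = \del(\genc) = 0$ in $\alg$; this is the cleanest route. Then $\sum_k z^{ik}\del(z^{kj}) = \genf^i\big(\sum_k\genv^k\del(\genf^k)\big)\genv^j = 0$.

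Next, for $\sum_k \del(z^{ik})z^{kj} = \del(z^{ij})$, I would apply $\del$ to $\sum_k z^{ik}z^{kj} = z^{ij}$ and use the Leibniz rule: $\del(z^{ij}) = \sum_k \del(z^{ik})z^{kj} + \sum_k z^{ik}\del(z^{kj})$, and the last sum vanishes by the identity just proved. The two $\delbar$ identities follow by the mirror-image argument: $\sum_k\del$ becomes $\sum_k\delbar$, one uses $\delbar(\genf^i) = 0$ so that $\delbar(z^{ij}) = \genf^i\delbar(\genv^j)$, and the key vanishing is $\sum_k \delbar(z^{ik})z^{kj} = \big(\sum_k\delbar(\genf^i)\genv^k\big)\cdots$ — more directly, $\sum_k\delbar(z^{ik})z^{kj}$ expands to a term containing $\sum_k \genv^k\genf^k$ differentiated by $\delbar$, i.e. $\delbar(\genc) = 0$; alternatively apply $\delbar$ to $\sum_k z^{ik}z^{kj} = z^{ij}$ and cancel.

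**Main obstacle.** The only delicate point is bookkeeping: making sure that the element $\sum_k \genv^k\del(\genf^k)$ (resp. $\sum_k \delbar(\genv^k)\genf^k$) is genuinely $\del(\genc)$ (resp. $\delbar(\genc)$) and that this vanishes in the FODC over $\alg$ — this is exactly because $\Lambda_+$ (resp. $\Lambda_-$) is generated in part by $\del(\genc)$, so the quotient kills it. Once one is careful that we are computing in $\calcPC/\Lambda_+$ and $\calcMC/\Lambda_-$ rather than in the pre-quotient modules, everything reduces to the Leibniz rule applied to $\sum_k z^{ik}z^{kj} = z^{ij}$ together with $\del(\genc) = \delbar(\genc) = 0$, with no braiding computation needed at all. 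I would also remark that the restriction to $\subalg$ poses no issue since all these elements lie in $\subalg \cdot \diff(\subalg)$.
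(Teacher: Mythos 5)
Your proposal is correct and follows essentially the paper's argument: both rest only on the Leibniz rule, $\del(\genv^k)=0$, $\delbar(\genf^k)=0$, and the relation $\genc=1$ in $\alg$, with no braiding computation needed (your initial detour through \cref{lem:equivalent-Gamma} is rightly abandoned). The only difference is the order: the paper proves $\sum_k \del(z^{ik})z^{kj}=\del(z^{ij})$ directly using $\sum_k\genv^k\genf^k=1$ and then gets the vanishing identity from Leibniz applied to $\sum_k z^{ik}z^{kj}=z^{ij}$, whereas you prove the vanishing directly via $\sum_k\genv^k\del(\genf^k)=\del(\genc)=0$ and deduce the other identity from Leibniz, which is a mirror-image of the same computation.
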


\begin{proof}
Consider the identities for the differential $\del$. We compute
\[
\begin{split}
\sum_k \del(z^{i k}) z^{k j} & = \sum_k \del(\genf^i \genv^k) \genf^k \genv^j = \sum_k \del(\genf^i) \genv^k \genf^k \genv^j \\
& = \del(\genf^i) \genv^j = \del(\genf^i \genv^j) = \del(z^{i j}).
\end{split}
\]
On the other hand, using the fact that $z$ is a projection, we have
\[
\del(z^{i j}) = \sum_k \del(z^{i k} z^{k j}) = \sum_k \del(z^{i k}) z^{k j} + \sum_k z^{i k} \del(z^{k j}).
\]
Using the previous identity, this implies the vanishing of last term.
The corresponding identities for the differential $\delbar$ are obtained in a similar manner.
\end{proof}

The last identities we will require are the vanishing of certain degree $3$ terms.
We will use from now on the notation $\lambda_i := \wt(v_i)$.

\begin{lemma}
\label{lem:triple-wedge}
We have the identities
\[
\begin{split}
\sum_{i, j, k} q^{(2 \rho, \lambda_i)} \del z^{i j} \wedge \delbar z^{j k} \wedge \del z^{k i} & = 0, \\
\sum_{i, j, k} q^{(2 \rho, \lambda_i)} \delbar z^{i j} \wedge \del z^{j k} \wedge \delbar z^{k i} & = 0.
\end{split}
\]
\end{lemma}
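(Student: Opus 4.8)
The plan is to reduce both identities to the single claim that $\sum_k \delbar z^{i k} \wedge \del z^{k j} = 0$ for all $i, j$, and then to obtain this from the relation $\genc = 1$ that defines the quantum flag manifold.

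I would carry out the computation in the calculus over $\alg$ (the one inducing the Heckenberger--Kolb calculus over $\subalg \cong \Cflag$), where the generators $\genf^i, \genv^i$ and their differentials $\del\genf^i = \diff\genf^i$, $\delbar\genv^i = \diff\genv^i$ are available. Since $z^{i j} = \genf^i \genv^j$ and $\del(\genv^i) = 0 = \delbar(\genf^i)$, the Leibniz rule gives $\del z^{i j} = (\del\genf^i)\genv^j$ and $\delbar z^{i j} = \genf^i(\delbar\genv^j)$. The key step is the vanishing of the form $\Psi := \sum_k \delbar(\genv^k) \wedge \del(\genf^k)$. Indeed $\sum_k \genv^k\, \del(\genf^k) = \del\!\left(\sum_k \genv^k \genf^k\right) = \del(\genc)$, which vanishes since $\genc = 1$ in $\alg$; applying $\diff$ and using $\diff(\genv^k\, \del\genf^k) = (\diff\genv^k) \wedge \del\genf^k + \genv^k\, \diff(\del\genf^k) = \delbar(\genv^k) \wedge \del(\genf^k)$ — because $\diff\genv^k = \delbar\genv^k$ and $\diff(\del\genf^k) = \diff^2\genf^k = 0$ — one gets $\Psi = \diff(0) = 0$. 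Pulling the $0$-forms $\genf^i, \genv^j$ through the wedge then yields
\[
\sum_k \delbar z^{i k} \wedge \del z^{k j} = \sum_k \genf^i\, \delbar(\genv^k) \wedge \del(\genf^k)\, \genv^j = \genf^i\, \Psi\, \genv^j = 0 .
\]

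With this in hand, the first identity follows by regrouping the sum as $\sum_{i, j} q^{(2\rho, \lambda_i)}\, \del z^{i j} \wedge \bigl( \sum_k \delbar z^{j k} \wedge \del z^{k i} \bigr)$, whose inner sum vanishes by the displayed identity; and the second by the analogous regrouping $\sum_{i, k} q^{(2\rho, \lambda_i)}\, \bigl( \sum_j \delbar z^{i j} \wedge \del z^{j k} \bigr) \wedge \delbar z^{k i}$. Alternatively, the second identity can be deduced from the first by applying the $*$-involution: using $(\del z^{a b})^* = \delbar z^{b a}$ (a consequence of $\del(\genf^i)^* = \delbar(\genv^i)$, established in the proof of \cref{thm:star-calculi}, together with \cref{cor:star-iso}), the sign rule $(\omega \wedge \chi)^* = (-1)^{p q} \chi^* \wedge \omega^*$, and $\overline{q^{(2\rho, \lambda_i)}} = q^{(2\rho, \lambda_i)}$, one checks that $*$ carries the first sum onto the negative of the second after relabelling the summation indices.

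The main obstacle is really just spotting the right reformulation: that the mixed products $\sum_k \delbar z^{i k} \wedge \del z^{k j}$ collapse to zero — which is exactly the point at which the defining relation $\genc = 1$ of the quantum flag manifold enters the argument. Everything else is bookkeeping; the only points needing a little care are the passage to the calculus over $\alg$ (so that $\genf^i, \genv^i$ and $\del\genf^i, \delbar\genv^i$ are meaningful) and the signs attached to $\diff$ and to $*$ on forms of the relevant bidegrees. No computation with the braiding is required.
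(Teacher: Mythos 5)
There is a genuine gap, and in fact your key intermediate claim is false. The identity $\sum_k \delbar z^{i k} \wedge \del z^{k j} = 0$ cannot hold in the Heckenberger--Kolb calculus: taking $j = i$, multiplying by $q^{(2\rho,\lambda_i)}$ and summing over $i$ would give $\kappa = \iu\sum_{i,j} q^{(2\rho,\lambda_i)}\delbar z^{i j}\wedge \del z^{j i} = 0$, whereas $\Phi(\kappa) = \iu\sum_{i\in\IndP} q^{(2\rho,\lambda_i)} y_i\wedge x_i \neq 0$ (\cref{lem:image-Phi}, \cref{prop:lefschetz-iso}); classically the statement amounts to $\delbar P\wedge\del P = 0$ for the projection realizing $G/P_S$, which already fails for $\mathbb{CP}^1$. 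The flaw is in the step where you apply $\diff$ to the relation $\sum_k \genv^k\,\del(\genf^k) = 0$. That relation lives in the FODC over the larger algebra $\alg$, and differentiating it produces a \emph{degree-two} relation, $\sum_k\delbar(\genv^k)\wedge\del(\genf^k)=0$, which is only valid in a differential calculus prolonging the FODC over $\alg$. The calculus $\Omega^\bullet$ of the paper is the universal (maximal) prolongation of the FODC $\calc$ over $\subalg$; it is not obtained by restricting a prolongation over $\alg$, and second-order relations forced over $\alg$ need not hold in $\Omega^\bullet$ --- indeed this one does not, since any prolongation over $\alg$ in which your computation were valid would kill $\kappa$, so $\Omega^\bullet$ cannot embed compatibly into such a calculus. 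Only \emph{first-order} identities (elements of $\Omega^1$) may be imported from the construction over $\alg$, because $\calcDel$ and $\calcDelb$ are by definition induced from the FODCs over $\alg$; this is exactly how \cref{lem:del-delbar} is proved, and it is why the paper's own argument never differentiates a relation over $\alg$ but instead commutes the zero-forms $z^{l i}$ past the one-forms using the right-module structure $\del(z)z = q^{(\alpha_s,\alpha_s)}\braidz\, z\,\del(z)$ and the braiding identities of \cref{sec:braidingcalc} --- so, contrary to your closing remark, the braiding computation is not avoidable by this route.

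A smaller point: your observation that the second identity follows from the first by applying the $*$-structure of \cref{thm:star-calculi} (using $(\del z^{ab})^* = \delbar z^{ba}$ and $(\omega\wedge\chi)^* = (-1)^{pq}\chi^*\wedge\omega^*$) is correct and would be a legitimate shortcut compared with the paper's parallel computation for $Z_2$; but it does not repair the argument, since the proof of the first identity is where the gap lies.
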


\begin{proof}
To prove the first identity let us write
\[
Z_1 = \sum_{i, j, k} q^{(2 \rho, \lambda_i)} \del z^{i j} \wedge \delbar z^{j k} \wedge \del z^{k i}
= \sum_{i, j, k, l} q^{(2 \rho, \lambda_i)} \del z^{i j} \wedge \delbar z^{j k} \wedge \del(z^{k l}) z^{l i},
\]
where in the second equality we have used \cref{lem:del-delbar}.
We will move the element $z^{l i}$ to the left using the relations given in \cref{sec:braidingcalc}.
We obtain
\[
Z_1 = q^{(\alpha_s, \alpha_s)} \sum_{i, j, k, l} \sum_{\{ a, b, c, d \}} q^{(2 \rho, \lambda_i)} T_{a_1 b_1 c_1 d_1}^{i j a_2 b_2} T_{a_2 b_2 c_2 d_2}^{j k a_3 b_3} T_{a_3 b_3 c_3 d_3}^{k l l i} z^{a_1 b_1} \del z^{c_1 d_1} \wedge \delbar z^{c_2 d_2} \wedge \del z^{c_3 d_3},
\]
where ${\{ a, b, c, d \}}$ denotes the sum over the variables ${\{ a_i, b_i, c_i, d_i \}}$ with $i = 1, 2, 3$.
The product of the $T_{a b c d}^{i j k l}$ terms can be simplified using \cref{lem:idT1}. We compute
\[
\sum_{j, a_2, b_2} \sum_{k, a_3, b_3} T_{a_1 b_1 c_1 d_1}^{i j a_2 b_2} T_{a_2 b_2 c_2 d_2}^{j k a_3 b_3} T_{a_3 b_3 c_3 d_3}^{k l l i}
= \delta_{d_2, c_3} \sum_{j, a_2, b_2} T_{a_1 b_1 c_1 d_1}^{i j a_2 b_2} T_{a_2 b_2 c_2 d_3}^{j l l i}
= \delta_{d_1, c_2} \delta_{d_2, c_3} T_{a_1 b_1 c_1 d_3}^{i l l i}.
\]
Plugging this in we find
\[
Z_1 = q^{(\alpha_s, \alpha_s)} \sum_{i, l} \sum_{a_1, b_1, c_1} \sum_{d_1, d_2, d_3} q^{(2 \rho, \lambda_i)} T_{a_1 b_1 c_1 d_3}^{i l l i} z^{a_1 b_1} \del z^{c_1 d_1} \wedge \delbar z^{d_1 d_2} \wedge \del z^{d_2 d_3}.
\]
Next using the relation \cref{lem:idT2} we obtain
\[
Z_1 = q^{(\alpha_s, \alpha_s)} \sum_{a_1, b_1} \sum_{d_1, d_2} q^{(2 \rho, \lambda_{a_1})} z^{a_1 b_1} \del z^{b_1 d_1} \wedge \delbar z^{d_1 d_2} \wedge \del z^{d_2 a_1}.
\]
But then using $\sum_k z^{i k} \del z^{k j} = 0$ we conclude that $Z_1 = 0$.

The second identity is proven similarly. Let us write
\[
Z_2 = \sum_{i, j, k, l} q^{(2 \rho, \lambda_i)} \delbar z^{i j} \wedge \del z^{j k} \wedge \delbar(z^{k l}) z^{l i}.
\]
Note that $Z_2 = 0$ due to \cref{lem:del-delbar}.
We rewrite it in the form
\[
Z_2 = q^{-(\alpha_s, \alpha_s)} \sum_{i, j, k, l} \sum_{\{ a, b, c, d \}} q^{(2 \rho, \lambda_i)} T_{a_1 b_1 c_1 d_1}^{i j a_2 b_2} T_{a_2 b_2 c_2 d_2}^{j k a_3 b_3} T_{a_3 b_3 c_3 d_3}^{k l l i} z^{a_1 b_1} \delbar z^{c_1 d_1} \wedge \del z^{c_2 d_2} \wedge \delbar z^{c_3 d_3}.
\]
By the same type of computations we did for $Z_1$ we obtain
\[
Z_2 = q^{-(\alpha_s, \alpha_s)} \sum_{a_1, b_1} \sum_{d_1, d_2} q^{(2 \rho, \lambda_{a_1})} z^{a_1 b_1} \delbar z^{b_1 d_1} \wedge \del z^{d_1 d_2} \wedge \delbar z^{d_2 a_1}.
\]
Using \cref{lem:del-delbar} this can be rewritten as
\[
Z_2 = q^{-(\alpha_s, \alpha_s)} \sum_{i, j, k} q^{(2 \rho, \lambda_i)} \delbar z^{i j} \wedge \del z^{j k} \wedge \delbar z^{k i}.
\]
But since $Z_2 = 0$ this implies the second identity.
\end{proof}

\subsection{The Kähler form}

We will now introduce a $2$-form $\kappa \in \Omega^\bullet$, which we will later show to satisfy the conditions defining a Kähler form. It is defined by
\[
\kappa := \iu \sum_{i, j, k} q^{(2 \rho, \lambda_i)} z^{i j} \diff z^{j k} \wedge \diff z^{k i}.
\]
Here $\iu := \sqrt{-1}$ denotes the imaginary unit and $\lambda_i = \wt(v_i)$.

We begin by showing that $\kappa$ is left $\CqG$-coinvariant. In this proof we will consider the $z^{i j}$ as elements of $\Cflag$, so that $\Delta$ denotes the coproduct of $\CqG$.

\begin{lemma}
\label{lem:kappa-coinvariant}
The element $\kappa$ is left $\CqG$-coinvariant.
\end{lemma}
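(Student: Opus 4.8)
The plan is to show that $\kappa$ lies in the invariant part of $\Omega^\bullet$ by reducing the coinvariance of $\kappa$ to the covariance properties of the differential calculus and the generators $z^{ij}$. Recall that $\Cflag = \CqG^{\UqlS}$ is a left $\CqG$-comodule algebra via the restriction of the coproduct, and that the Heckenberger-Kolb calculus $\Omega^\bullet$ is left-covariant, so that $\diff$ commutes with the coaction and $\Omega^\bullet$ carries a left $\CqG$-comodule algebra structure extending that on $\subalg$. The key point is the transformation behaviour of the generators: since $z^{ij} = c^\lambda_{f_i, v_N} c^{-w_0\lambda}_{v_j, f_N}$, the coproduct on $\CqG$ gives $\Delta(z^{ij}) = \sum_{k,l} u^\lambda_{ik}(u^\lambda_{jl})^* \otimes z^{kl}$, or more precisely a sum of the form $\sum_{k,l} t^{ij}_{kl}\otimes z^{kl}$ where $t^{ij}_{kl}$ are matrix coefficients on $V(\lambda)\otimes V(\lambda)^*$. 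Applying $\id\otimes\diff$ and using left-covariance, the same coefficients govern the coaction on $\diff z^{ij}$.

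The main step is then the computation of $\Delta_\Omega(\kappa)$. Writing $\Delta_\Omega(\kappa) = \iu \sum q^{(2\rho,\lambda_i)} \Delta(z^{ij})\, (\id\otimes\diff)\Delta(z^{jk}) \wedge (\id\otimes\diff)\Delta(z^{ki})$ and substituting the coaction formula, one collects the first legs into a product of three matrix-coefficient factors and the second legs into $\sum q^{(2\rho,\lambda_i)} z^{\bullet\bullet}\diff z^{\bullet\bullet}\wedge\diff z^{\bullet\bullet}$. The weight factor $q^{(2\rho,\lambda_i)}$ is exactly what is needed: using the identity $S^2(u^V_{ij}) = q^{(2\rho,\wt v_i - \wt v_j)} u^V_{ij}$ (equivalently, the ribbon/pivotal structure), the product of the three matrix coefficients contracted along the cyclic index pattern $i\to j\to k\to i$ collapses — the sum over $i$ with the weight $q^{(2\rho,\lambda_i)}$ produces a quantum trace, which is a $\CqG$-invariant, i.e.\ equals $1\otimes(\cdots)$ after the contraction. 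Concretely I expect the matrix-coefficient factors to combine, via $\Delta$ being an algebra map and the orthonormality of $\{v_i\}$, into $\sum_i q^{(2\rho,\lambda_i)} u^\lambda_{i a}(u^\lambda_{i a'})^* \cdots$ with indices arranged so that the weighted sum over $i$ telescopes to $\delta$'s times $1 \in \CqG$, leaving $\Delta_\Omega(\kappa) = 1\otimes\kappa$.

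The technical heart — and the step I expect to be the main obstacle — is making the index bookkeeping in this contraction precise: one must track how the cyclic pattern $z^{ij}\diff z^{jk}\wedge\diff z^{ki}$ interacts with the three copies of the transformation matrix, and verify that the weight $q^{(2\rho,\lambda_i)}$ supplies precisely the factor converting an ordinary trace over $V(\lambda)$ into the quantum trace that is $\Uqg$-invariant (hence $\CqG$-coinvariant as a scalar). This is essentially the statement that $\sum_i q^{(2\rho,\lambda_i)} f_i \otimes v_i$ is a $\Uqg$-invariant element of $V(\lambda)^*\otimes V(\lambda)$, combined with the cocommutation of $\diff$ with the coaction. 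Once the three transformation matrices are shown to contract against this invariant tensor along the closed index loop, coinvariance follows immediately. A cleaner alternative, which I would mention, is to observe that $\kappa$ is manifestly built as the image under $\id\otimes\diff\otimes\diff$ followed by multiplication and antisymmetrization of the canonical invariant element $\sum_i q^{(2\rho,\lambda_i)} z^{i\bullet}\otimes z^{\bullet\bullet}\otimes z^{\bullet i}$ of the relevant tensor power, and that this element is invariant because it is the image of the quantum-trace invariant under the comodule-algebra structure maps; then left-covariance of the calculus does the rest with no index computation at all.
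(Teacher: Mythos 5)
Your proposal follows essentially the same route as the paper: one expands $\Delta_\Gamma(\kappa)$ using $\Delta(z^{ij})=\sum_{a,b}u^\lambda_{ia}S(u^\lambda_{bj})\otimes z^{ab}$ and left-covariance, contracts the two inner index pairs with the antipode identity $\sum_k S(u_{bk})u_{kc}=\delta_{bc}$, and closes the cyclic loop with the quantum-trace identity $\sum_i q^{(2\rho,\lambda_i)}u_{ia}S(u_{fi})=q^{(2\rho,\lambda_a)}\delta_{af}$, which is exactly the consequence of $S^2(u_{ij})=q^{(2\rho,\wt v_i-\wt v_j)}u_{ij}$ that you identify. The index bookkeeping you flag as the remaining obstacle is precisely these two contractions and works out as you predict, yielding $\Delta_\Gamma(\kappa)=1\otimes\kappa$.
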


\begin{proof}
We have $z^{i j} = u^\lambda_{i N} S(u^\lambda_{N j})$, as seen in the proof of \cref{cor:star-iso}. We will omit the superscript $\lambda$ for notational convenience. Then we easily compute
\[
\Delta(z^{i j}) = \sum_{a, b} u_{i a} S(u_{b j}) \otimes z^{a b},
\]
in accordance with the fact that $\Cflag$ is a left coideal.
Next recall that in a left-covariant FODC $\Gamma$ we have $\Delta_\Gamma(a \diff b) = \Delta(a) (\id \otimes \diff) (\Delta(b))$.
Then we obtain
\[
\Delta_\Gamma (\kappa) = \iu \sum_{i, j, k} \sum_{a, b, c, d, e, f} q^{(2 \rho, \lambda_i)} u_{i a} S(u_{b j}) u_{j c} S(u_{d k}) u_{k e} S(u_{f i}) \otimes z^{a b} \diff z^{c d} \wedge \diff z^{e f}.
\]
Next we use the identities following from the Hopf algebra structure
\[
\sum_{k = 1}^N S(u_{i k}) u_{k j} = \delta_{i j}, \quad
\sum_{k = 1}^N q^{(2 \rho, \wt v_k - \wt v_j)} u_{k j} S(u_{i k}) = \delta_{i j},
\]
where the second one follows from the fact that $S^2(u_{i j}) = q^{(2 \rho, \wt v_i - \wt v_j)} u_{i j}$. Finally
\[
\begin{split}
\Delta_\Gamma (\kappa) & = \iu \sum_i \sum_{a, b, d, f} q^{(2 \rho, \lambda_i)} u_{i a} S(u_{f i}) \otimes z^{a b} \diff z^{b d} \wedge \diff z^{d f} \\
& = \iu \sum_{a, b, d} q^{(2 \rho, \lambda_a)} 1 \otimes z^{a b} \diff z^{b d} \wedge \diff z^{d a}.
\end{split}
\]
Therefore we get $\Delta_\Gamma (\kappa) = 1 \otimes \kappa$, that is $\kappa$ is left $\CqG$-coinvariant.
\end{proof}

We can now easily show that the $2$-form $\kappa$ satisfies most of the requirements of a Kähler form, as described in \cref{def:kahler}.

\begin{proposition}
\label{prop:kappa-props}
The $2$-form $\kappa$ satisfies the following properties:
\begin{enumerate}
\item it is closed,
\item it belongs to $\Omega^{(1, 1)}$,
\item it is central and real.
\end{enumerate}
\end{proposition}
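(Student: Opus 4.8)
The plan is to verify the three properties of $\kappa$ in turn, leveraging the coinvariance established in \cref{lem:kappa-coinvariant} together with the structural identities of \cref{lem:del-delbar} and \cref{lem:triple-wedge}.

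\textbf{Closedness.} First I would expand $\diff = \del + \delbar$ in the definition of $\kappa$. Since $\del z^{ij} \wedge \del z^{ki}$-type terms and the mixed terms are governed by \cref{lem:del-delbar}, I expect $\kappa$ to split (up to the central factor $\iu$ and the weights $q^{(2\rho,\lambda_i)}$) into pieces of bidegree $(2,1)$, $(1,2)$ after applying $\diff$. Applying $\diff$ once more and using $\diff^2 = 0$ together with $\del\delbar = -\delbar\del$, the surviving terms will be exactly the triple-wedge expressions of \cref{lem:triple-wedge} (after using the Leibniz rule to move a $\diff$ past a $z^{ij}$ and then \cref{lem:del-delbar} to absorb the extra factor), which vanish. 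More precisely: $\diff\kappa = \iu\sum q^{(2\rho,\lambda_i)}\bigl(\diff z^{ij}\wedge\diff z^{jk}\wedge\diff z^{ki} - z^{ij}\diff^2 z^{jk}\wedge\diff z^{ki} + \dots\bigr)$; the $\diff^2$ terms die, and $\sum q^{(2\rho,\lambda_i)}\diff z^{ij}\wedge\diff z^{jk}\wedge\diff z^{ki}$ expands into a sum of bidegree-pure pieces, the $(3,0)$ and $(0,3)$ ones vanishing because $\Omega^{(3,0)} = \Omega^{(0,3)} = 0$ for irreducible flag manifolds (the holomorphic FODC has classical rank, so $\Omega^{(p,0)} = 0$ for $p > \dim_{\mathbb C}$ and in any case one checks the lowest such vanishing directly from the $\calcP$-relations), and the $(2,1)$ and $(1,2)$ pieces being precisely $Z_1$ and $Z_2$ from \cref{lem:triple-wedge}, hence zero.

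\textbf{Bidegree $(1,1)$.} Writing $\kappa = \iu\sum q^{(2\rho,\lambda_i)} z^{ij}\,(\del + \delbar)z^{jk}\wedge(\del+\delbar)z^{ki}$, the a priori four pieces have bidegrees $(2,0)$, $(1,1)$, $(1,1)$, $(0,2)$. For the $(2,0)$ piece $\sum q^{(2\rho,\lambda_i)} z^{ij}\del z^{jk}\wedge\del z^{ki}$: using $\sum_k z^{ik}\del z^{kj} = 0$ from \cref{lem:del-delbar} one shows this vanishes (relabel and contract the adjacent index), and symmetrically the $(0,2)$ piece vanishes via $\sum_k \delbar z^{ik} z^{kj} = 0$. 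Hence $\kappa$ reduces to its two $(1,1)$ components and so $\kappa \in \Omega^{(1,1)}$.

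\textbf{Central and real.} Centrality should follow from coinvariance: a left $\CqG$-coinvariant element of $\Omega^\bullet$ over $\subalg \cong \Cflag$ that is also a bimodule-degree-zero object is central — more directly, centrality can be read off by combining left-coinvariance (\cref{lem:kappa-coinvariant}) with the fact that $\kappa$ is built $\Uqg$-invariantly from the $z^{ij}$, so for any $a\in\subalg$ one has $a\kappa = \kappa a$ by a standard argument using the invariance of $\kappa$ and the nondegeneracy of the pairing; alternatively one checks $z^{ab}\kappa = \kappa z^{ab}$ on generators using the braiding relations of \cref{sec:braidingcalc}, exactly as in the manipulations proving \cref{lem:triple-wedge}. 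For reality: $\kappa^* = -\iu\sum q^{(2\rho,\lambda_i)}(\diff z^{jk}\wedge\diff z^{ki})^*(z^{ij})^* = -\iu\sum q^{(2\rho,\lambda_i)}\,\diff z^{ik}\wedge\diff z^{kj}\, z^{ji}\cdot(-1)$ using $(\omega\wedge\chi)^* = -\chi^*\wedge\omega^*$ for one-forms, $\diff(z^{kj})^* = \diff z^{jk}$, and $(z^{ij})^* = z^{ji}$ from the Lemma before \cref{lem:del-delbar}; then moving $z^{ji}$ to the left and relabelling (using centrality just established, or the bimodule relations directly, together with the weight identity $q^{(2\rho,\lambda_i)}$ being real) returns $\kappa$. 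The main obstacle I anticipate is the bookkeeping in the reality computation — tracking the sign from $(\omega\wedge\chi)^* = (-1)^{pq}\chi^*\wedge\omega^*$ across the double wedge, correctly reindexing the triple sum after conjugation, and confirming the weight factors $q^{(2\rho,\lambda_i)}$ recombine properly — though none of this is conceptually deep once centrality is in hand.
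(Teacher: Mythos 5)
Your overall strategy for (1) and (2) is the paper's (reduce $\diff\kappa$ to the two alternating triple sums and kill them with \cref{lem:triple-wedge}; kill the $(2,0)$ and $(0,2)$ pieces of $\kappa$ by the contractions of \cref{lem:del-delbar}), but the step disposing of the remaining terms of $\diff\kappa$ is wrong as stated. For a general irreducible flag manifold $\Omega^{(3,0)}$ and $\Omega^{(0,3)}$ are \emph{not} zero: the Heckenberger--Kolb calculi have classical dimensions, so these spaces are nonzero whenever $M=\dim_{\mathbb C}(\lieg/\liep_S)\geq 3$; degree reasons only help for the projective line and plane. Moreover the $(2,1)$ and $(1,2)$ pieces of $\sum_{i,j,k} q^{(2\rho,\lambda_i)}\diff z^{ij}\wedge\diff z^{jk}\wedge\diff z^{ki}$ are not ``precisely'' $Z_1$ and $Z_2$: they also contain arrangements with two adjacent differentials of the same type, e.g.\ $\del z^{ij}\wedge\del z^{jk}\wedge\delbar z^{ki}$. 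The correct mechanism (implicit in the paper) is again \cref{lem:del-delbar}: writing $\del z^{ij}=\sum_m \del(z^{im})z^{mj}$ and using $\sum_j z^{mj}\del(z^{jk})=0$ gives $\sum_j \del z^{ij}\wedge\del z^{jk}=0$, and similarly $\sum_j \delbar z^{ij}\wedge\delbar z^{jk}=0$; this kills all six non-alternating terms with no appeal to vanishing of $(3,0)$-forms. You use exactly this trick in your bidegree argument, so the repair is available to you, but as written the closedness step contains a false claim and overlooks terms.

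The genuine gap is centrality. Left $\CqG$-coinvariance of a form does not by itself imply centrality (coinvariant $1$-forms of this calculus are not central), and your ``standard argument using the invariance of $\kappa$ and the nondegeneracy of the pairing'' is not a proof; the fallback of checking $z^{ab}\kappa=\kappa z^{ab}$ through the braiding relations is not carried out and would be a substantial computation of the \cref{lem:triple-wedge} type. The paper closes this point by citing \cite[Corollary 4.6]{obu17}: every left $\CqG$-coinvariant, $\diff$-closed form is central (this uses $\calB^+\Omega^1=\Omega^1\calB^+$), so centrality follows from \cref{lem:kappa-coinvariant} together with closedness; some such ingredient is missing from your argument. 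Relatedly, in the reality check you propose to ``move $z^{ji}$ to the left using centrality just established'', which is a non sequitur: centrality of $\kappa$ lets you commute $\kappa$ itself past elements of $\subalg$, not commute $z^{ji}$ past an arbitrary $2$-form. The clean route, as in the paper, is to first use part (2) to write $\kappa=\iu\sum_{i,j}q^{(2\rho,\lambda_i)}\delbar z^{ij}\wedge\del z^{ji}$, after which reality is immediate from $\overline{\iu}=-\iu$, $(\omega\wedge\chi)^*=-\chi^*\wedge\omega^*$ for $1$-forms, $\del(a)^*=\delbar(a^*)$ and $(z^{ij})^*=z^{ji}$; alternatively, from your expression one can absorb the trailing factor using $\sum_j\del(z^{kj})z^{ji}=\del z^{ki}$ and $\sum_j\delbar(z^{kj})z^{ji}=0$ from \cref{lem:del-delbar}, again avoiding any appeal to centrality.
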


\begin{proof}
(1) Using the identities in \cref{lem:del-delbar} it is easy to see that
\[
\begin{split}
\diff \kappa & = \iu \sum_{i, j, k} q^{(2 \rho, \lambda_i)} \diff z^{i j} \wedge \diff z^{j k} \wedge \diff z^{k i} \\
& = \iu \sum_{i, j, k} q^{(2 \rho, \lambda_i)} \del z^{i j} \wedge \delbar z^{j k} \wedge \del z^{k i}
+ \iu \sum_{i, j, k} q^{(2 \rho, \lambda_i)} \delbar z^{i j} \wedge \del z^{j k} \wedge \delbar z^{k i}.
\end{split}
\]
But it follows from \cref{lem:triple-wedge} that both terms vanish, hence $\diff \kappa = 0$.

(2) Again using \cref{lem:del-delbar} it is immediate to see that
\[
\kappa = \iu \sum_{i, j, k} q^{(2 \rho, \lambda_i)} z^{i j} \delbar z^{j k} \wedge \del z^{k i} = \iu \sum_{i, j} q^{(2 \rho, \lambda_i)} \delbar z^{i j} \wedge \del z^{j i}.
\]
This implies that $\kappa \in \Omega^{(1, 1)}$ for the natural complex structure on $\Omega^\bullet$.

(3) It is a general fact that every left $\CqG$-coinvariant $\diff$-closed form is central, see \cite[Corollary 4.6]{obu17} (this result requires $\calB^+ \Omega^1 = \Omega^1 \calB^+$, which holds in our case as explained in the next subsection).
Since $\kappa$ is left $\CqG$-coinvariant by \cref{lem:kappa-coinvariant} and $\diff$-closed, we conclude that it is central.
To show that $\kappa$ is real we use $(z^{i j})^* = z^{j i}$ and compute
\[
\kappa^* = \iu \sum_{i, j} q^{(2 \rho, \lambda_i)} \del(z^{j i})^* \wedge \delbar(z^{i j})^* = \iu \sum_{i, j} q^{(2 \rho, \lambda_i)} \delbar(z^{i j}) \wedge \del(z^{j i}) = \kappa. \qedhere
\]
\end{proof}

To finish the proof that $\kappa$ is a Kähler form we need to check the condition on the Lefschetz map $L: \Omega^\bullet \to \Omega^\bullet$, as given in \cref{def:almost}.
We will do this in the next subsection.

\subsection{Finishing the proof}

Let us first briefly explain our strategy.
Since $\Omega^\bullet$ has dimension $M := \dim_\mathbb{C} (\lieg / \liep_S)$ (see below), we need to show that the map $L^{M - k} : \Omega^k \to \Omega^{2 M - k}$ is an isomorphism for all $k = 0, \cdots, M - 1$.
By applying the functor $\Phi$ from Takeuchi's categorical equivalence, it suffices to show the same property for $\Phi(L^{M - k}) : \Phi(\Omega^k) \to \Phi(\Omega^{2 M - k})$.
We will show that this can be checked upon choosing an appropriate basis of $\algquo$.

Recall that for an irreducible flag manifold we have $S = \Pi \backslash \{\alpha_s\}$ for some $s \in \{1, \cdots, r\}$.
We will write $I := \{ 1, \cdots, N \}$, where $N = \dim V(\omega_s)$, and define the index set
\[
\IndP := \{ i \in I : (\omega_s, \omega_s - \alpha_s - \wt v_i) = 0 \}.
\]
As in \cite[Section 3.2.1]{heko06}, the elements of $\IndP$ label certain $\UqlS$-submodules of $V(\omega_s)$ and its dual.
It is known that $\# \IndP = \dim_\mathbb{C} (\lieg / \liep_S)$ and we will write $M = \dim_\mathbb{C} (\lieg / \liep_S)$.

Now consider $\Phi(\Omega^\bullet) = \takalg$, as in Takeuchi's categorical equivalence.
It turns out that $\Phi(\Omega^\bullet)$ inherits an algebra structure from $\Omega^\bullet$. This is because $\calB^+ \Omega^\bullet = \Omega^\bullet \calB^+$, as explained in \cite[Section 3.3.4]{{heko06}}.
Let us introduce the notation
\[
x_i := [\del z^{i N}], \quad
y_i := [\delbar z^{N i}], \quad
i \in \IndP,
\]
where we use $[\cdot]$ to denote the equivalence classes in the quotient $\takalg$.

\begin{lemma}
\label{lem:image-Phi}
We have
\[
[\del z^{i j}] = \begin{cases}
x_i & i \in \IndP, \ j = N \\
0 & \mathrm{otherwise}
\end{cases}
, \quad
[\delbar z^{i j}] = \begin{cases}
y_i & i = N, \ j \in \IndP \\
0 & \mathrm{otherwise}
\end{cases}
.
\]
\end{lemma}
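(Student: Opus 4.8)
The plan is to compute the image of $\del z^{ij}$ under the quotient map $\Omega^\bullet \to \Phi(\Omega^\bullet) = \takalg$ using the relations that define $\calcDel$, together with the algebra structure on $\Phi(\Omega^\bullet)$ coming from $\calB^+ \Omega^\bullet = \Omega^\bullet \calB^+$. The key point is that $\del z^{ij} = \del(\genf^i \genv^j) = \del(\genf^i)\, \genv^j$ and, in the quotient by $\calB^+ \Omega^\bullet$, the elements $z^{ij} = \genf^i \genv^j$ with $(i,j) \neq (N,N)$ lie in $\calB^+$ (since $\varepsilon(z^{ij}) = \delta_{iN}\delta_{jN}$), so the classes $[\del z^{ij}]$ satisfy the same relations as the images of $\del(\genf^i)$ in the associated Takeuchi quotient for the FODC $\calcDel$. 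This is precisely the computation carried out by Heckenberger and Kolb: the relevant $\UqlS$-module decompositions of $V(\omega_s)$ and its dual single out the index set $\IndP$, and the classes $[\del z^{ij}]$ vanish unless $j = N$ and $i \in \IndP$.

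First I would recall from \cite[Section 3.2.1, Section 3.3.4]{heko06} that $\Phi(\calcDel)$ is, as a left $\calC$-comodule (equivalently, as a vector space with the residual structure), identified with the span of $\{[\del z^{iN}] : i \in \IndP\}$; this is the classical fact that the holomorphic cotangent space of $G/P_S$ at the base point is the $\UqlS$-submodule of $V(\omega_s)^*$ (or $V(\omega_s)$, under our identifications) labelled by $\IndP$, and it has dimension $M = \dim_\mathbb{C}(\lieg/\liep_S)$. Concretely, in $\takalg$ one has $[\genf^i \del(\genf^j)] = [\del(\genf^i)\genf^j] = 0$ after passing to $\calB^+$, and the defining relation \eqref{eq:GammaP-rels}, rewritten via \cref{lem:equivalent-Gamma}, forces the surviving classes to be exactly those indexed by $\IndP$ with the second index equal to $N$. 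Then I would observe that $[\del z^{ij}] = [\del(\genf^i)\genv^j]$; moving $\genv^j$ into the coefficient algebra and using $[\genv^j] = \delta_{jN}$ in the quotient (again because $\genv^j \genf^k \in \calB^+$ unless forced otherwise, and $\sum_k \genv^k\genf^k = 1$), one gets $[\del z^{ij}] = \delta_{jN}[\del z^{iN}]$, which vanishes further unless $i \in \IndP$. This gives the first formula; the second follows by the entirely analogous computation for $\calcDelb$, using the relation \eqref{eq:GammaM-rels} and the identification of the anti-holomorphic cotangent space, now with the roles of the two indices swapped (the surviving classes $[\delbar z^{Ni}]$ with $i \in \IndP$).

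The main obstacle is bookkeeping rather than conceptual: one must be careful that the identification of $\Phi(\calcDel)$ with the $\IndP$-labelled module, and in particular the precise statement that $[\del z^{ij}] = 0$ for $i \notin \IndP$ or $j \neq N$, really follows from the Heckenberger-Kolb analysis of which weight components of $V(\omega_s)$ generate the calculus, and not just from a dimension count. I would handle this by invoking \cite[Proposition 3.6 and Section 3.3.4]{heko06} directly for the structure of $\Phi(\calcDel)$ and $\Phi(\calcDelb)$ as $\UqlS$-modules, and then reducing the vanishing claims to the fact that $\del(\genf^j)$ maps to zero in the Takeuchi quotient whenever $\genf^j$ lies outside the relevant submodule; the passage from $\genf^i$ to $z^{ij}$ via $\genv^j \mapsto \delta_{jN}$ in $\Phi(\Omega^\bullet)$ is then a short calculation using $\sum_k \genv^k \genf^k = 1$ and \cref{lem:del-delbar}. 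No delicate estimates are involved, so the proof should be a couple of paragraphs once the reference to \cite{heko06} is set up.
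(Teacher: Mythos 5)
Your proposal is correct and follows essentially the same route as the paper: the paper's proof is simply a citation of the proofs of \cite[Propositions 3.3 and 3.4]{heko06} (equivalently \cite[Lemma 8]{heko04}), which is exactly where you too defer the substantive claim that $[\del z^{i N}]$ vanishes for $i \notin \IndP$ (and the analogous claim for $\delbar$). Your additional explicit reduction of general $j$ to $j = N$, using $\varepsilon(z^{k j}) = \delta_{k N}\delta_{j N}$, \cref{lem:del-delbar} and $\calB^+ \Omega^\bullet = \Omega^\bullet \calB^+$, is a correct elaboration of what that citation already covers; just phrase it via $\del(z^{i j}) = \sum_k \del(z^{i k}) z^{k j}$ rather than via ``$[\genv^j] = \delta_{j N}$'', since $\genv^j$ by itself is not an element of $\subalg$.
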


\begin{proof}
This is shown in the proofs of \cite[Proposition 3.3]{{heko06}} and \cite[Proposition 3.4]{{heko06}}.
It can also be easily deduced from \cite[Lemma 8]{{heko04}}.
\end{proof}

The algebra $\Phi(\Omega^\bullet)$ can be endowed with a filtration which can be used to show that $\dim \Phi(\Omega^k) = \binom{2 M}{k}$, as for the classical exterior algebra. Moreover, a vector space basis for $\Phi(\Omega^\bullet)$ can be obtained by taking appropriate products of the generators $\{ x_i, y_i : i \in \IndP \}$.
For details on this filtration see \cite[Sections 3.3.1 and 3.3.4]{{heko06}}.

Next we will show that we can choose a basis with a particularly nice property, as explained in the next lemma.
We denote by $m$ be the smallest positive integer such that $m (P, P) \subset \mathbb{Z}$, where $P$ is the weight lattice.
Moreover recall the following terminology: given an algebra $A$ and a vector space basis $\{e_i\}_i$, the \emph{structure constants} are the coefficients $c_{i j}^k$ appearing in the expansion $e_i \cdot e_j = \sum_k c_{i j}^k e_k$ with respect to the given basis.

\begin{lemma}
\label{lem:basis-lau}
We can choose a basis for the algebra $\Phi(\Omega^\bullet)$ in such a way that the structure constants are in $\mathbb{Z}[q^{1/m}, q^{-1/m}]$ for all $0 < q \leq 1$.
\end{lemma}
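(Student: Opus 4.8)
The plan is to exhibit an explicit basis for $\Phi(\Omega^\bullet)$ built out of the generators $\{x_i, y_i : i \in \IndP\}$ and then show that the quantum parameter $q$ enters the structure constants only through the pairing exponents $(2\rho, \lambda_i)$, $(\omega_s, \omega_s)$, $(\alpha_s, \alpha_s)$ and products of braiding matrix entries, all of which live in $\mathbb{Z}[q^{1/m}, q^{-1/m}]$ by the very definition of $m$ and by \cref{prop:braiding-components}. More precisely, I would start from the filtration on $\Phi(\Omega^\bullet)$ mentioned above (from \cite[Sections 3.3.1 and 3.3.4]{heko06}), which shows that an associated graded basis is given by ordered monomials in the $x_i$ and $y_i$; I would then lift these to an actual basis of $\Phi(\Omega^\bullet)$ by choosing, once and for all, an ordering of $\IndP$ and taking the corresponding ordered products. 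The claim is that with respect to such a basis the multiplication table has coefficients in $\laupol$.

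The key computational input is the description of the relations among the $x_i$ and $y_i$ coming from the FODC relations in \cref{lem:equivalent-Gamma}, pushed through the functor $\Phi$ via \cref{lem:image-Phi}. Concretely, after applying $\Phi$ the bimodule relations \eqref{eq:GammaP-bimod}, \eqref{eq:GammaPC-bimod}, \eqref{eq:GammaM-bimod}, \eqref{eq:GammaMC-bimod} and the quadratic relations \eqref{eq:GammaP-rels}, \eqref{eq:GammaM-rels} become relations in $\Phi(\Omega^\bullet)$ of the form $x_i x_j = \sum q^{c} (\hat{R}_{\bullet,\bullet})^{\cdots}_{\cdots} x_k x_l$, and similarly mixing $x$'s and $y$'s, where the prefactor exponents $c$ are integer combinations of $(\omega_s,\omega_s)$, $(\alpha_s,\alpha_s)$ and $(2\rho,\lambda_i)$. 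By definition of $m$ we have $(\omega_s,\omega_s), (\alpha_s,\alpha_s) \in \tfrac{1}{m}\mathbb{Z}$ and $(2\rho,\lambda_i) \in \tfrac{1}{m}\mathbb{Z}$, so each such prefactor is a monomial in $q^{1/m}$. For the braiding entries one uses that, in a weight basis, the matrix of $\hat{R}_{V,W}$ has entries in $\mathbb{Z}[q^{1/m},q^{-1/m}]$: this is the standard fact that the universal $R$-matrix acts on type-1 modules with such entries, and it is exactly the content (or an immediate consequence) of the identities collected in \cref{prop:braiding-components} together with the normalization $\hat{R}_{V,W}(v_{\mathrm{hw}}\otimes w_{\mathrm{lw}}) = q^{(\wt v_{\mathrm{hw}},\wt w_{\mathrm{lw}})} w_{\mathrm{lw}}\otimes v_{\mathrm{hw}}$ fixed in \cref{sec:quantum}. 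Inverses $\hat{R}^{-1}$ have the same property.

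Given these relations, the algorithm to rewrite an arbitrary product of two basis monomials into the chosen ordered form is a finite sequence of applications of the above relations (a $q$-analogue of bubble-sort), and since each elementary move has coefficients in $\laupol$ and the process terminates (by the filtration degree argument of Heckenberger-Kolb, which guarantees no infinite descent), the resulting structure constants lie in $\laupol$. I would also need to note that the filtration is compatible with the $\laupol$-structure, i.e. that passing to the associated graded and lifting back does not introduce denominators — this is automatic because the lift is by ordered monomials and the graded relations are precisely the leading terms of the ungraded ones. Finally, the statement "for all $0 < q \leq 1$" is just the observation that nothing in the argument degenerates in this range: the algebra $\Phi(\Omega^\bullet)$ has the expected dimension $\binom{2M}{k}$ in each degree for all such $q$ (this is part of the Heckenberger-Kolb classification), so the chosen monomials remain a basis throughout.

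The main obstacle I expect is bookkeeping rather than conceptual: one must make the ordering on $\IndP$ explicit enough that the rewriting procedure provably terminates, and one must verify that the $\Phi$-images of all six families of relations in \cref{lem:equivalent-Gamma}, after using \cref{lem:image-Phi} to kill the components with $j \neq N$ (resp. $i \neq N$), genuinely reduce to relations purely among $\{x_i\}$, $\{y_i\}$ with no leftover algebra elements $z^{ab}$ obstructing the rewriting — i.e. that $\Phi(\Omega^\bullet)$ really is generated as an algebra by the $x_i, y_i$ with only quadratic relations, which is where \cite[Sections 3.3.1 and 3.3.4]{heko06} is doing the heavy lifting. Once that structural fact is in hand, tracking the $q$-powers is routine.
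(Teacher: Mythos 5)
The essential gap is in your treatment of the braiding coefficients. Your argument rests on the assertion that ``in a weight basis, the matrix of $\hat{R}_{V,W}$ has entries in $\mathbb{Z}[q^{1/m},q^{-1/m}]$'', which you attribute to \cref{prop:braiding-components}. That proposition says nothing about integrality --- it records how the components of the braiding behave under duality and complex conjugation --- and the assertion is false for an arbitrary weight basis: rescaling a weight basis by arbitrary functions of $q$ gives another weight basis, and in particular for the orthonormal bases used elsewhere in the paper the entries need not be Laurent polynomials with integer coefficients. The integrality statement only holds after a specific choice: one fixes a basis of $V(\omega_s)$ whose $\mathbb{Z}[q,q^{-1}]$-span is invariant under the integral form $\UqgZ$ (and transports it to $V(\omega_s)^*$ and to the spans of the $x_i$ and $y_i$, which are $\UqlS$-submodules of these); only then does the factorization $R = \tilde R \cdot D$, with $\tilde R$ having entries in $\mathbb{Z}[q,q^{-1}]$ and $D$ acting by $q^{(\lambda,\mu)}$ on weight vectors, give entries in $\laupol$ --- this is \cite[Lemma 4.10]{chtu14}, which is the input the paper actually uses. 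This choice of an integral-form-compatible basis is the key idea of the proof and it is missing from your proposal; without it the coefficient bookkeeping you describe cannot be completed.

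A second, smaller inaccuracy: the relations among the $x_i$ and $y_i$ in $\Phi(\Omega^\bullet)$ are not literally the $\Phi$-images of the first-order bimodule relations of \cref{lem:equivalent-Gamma}; they are the quadratic relations of \cite[Propositions 3.6 and 3.11]{heko06}, coming from the higher-order part of the calculus, and it is from an analysis of those proofs that the paper extracts structure constants in $\mathbb{Z}[q,q^{-1}]$ for the $x$-subalgebra and the $y$-subalgebra separately, with fractional powers of $q$ entering only through the cross-relations governed by $\hat R_{V,V^*}$. You do flag this as the place where Heckenberger--Kolb do the heavy lifting, and your ordered-monomial rewriting scheme is in the same spirit as their PBW-type basis, so this part is acceptable in outline; the fatal issue remains the unjustified integrality of the braiding entries.
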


\begin{proof}
Denote by $\UqgZ$ the integral form of $\Uqg$, see for instance \cite[Chapter 9.2]{chpr95}.
Fix a basis $\{v_i\}_{i \in I}$ of $V(\omega_s)$ in such a way that the $\mathbb{Z}[q, q^{-1}]$-module generated by the basis $\{v_i\}_{i \in I}$ is invariant under $\UqgZ$.
Then the same holds for the dual basis $\{f_i\}_{i \in I}$ of $V(\omega_s)^*$.
Moreover recall that the linear span of the generators $\{ x_i : i \in \IndP \}$ and $\{ y_i : i \in \IndP \}$ can be identified with appropriate $\UqlS$-submodules of $V(\omega_s)^*$ and $V(\omega_s)$, respectively.

Consider the subalgebra of $\algquo$ generated by the $\{x_i : i \in \IndP \}$.
The relations for this algebra are given in \cite[Proposition 3.6 (ii)]{heko06}.
Upon carefully analyzing this proof, we find that we can choose a basis $\{x_A\}_A$ in such a way the structure constants are in $\mathbb{Z}[q, q^{-1}]$.
A similar result holds for the subalgebra of $\algquo$ generated by the $\{y_i : i \in \IndP \}$.

Next we look at the cross-relations between the $\{x_i : i \in \IndP \}$ and the $\{y_i : i \in \IndP \}$. These appear in \cite[Proposition 3.11 (ii)]{heko06} and given in terms of the braiding $\hat{R}_{V, V^*}$, up to a prefactor.
With our choice of bases it can be shown that the matrix coefficients of $\hat{R}_{V, V^*}$ are in $\mathbb{Z}[q^{1/m}, q^{-1/m}]$, see for instance \cite[Lemma 4.10]{chtu14} and the discussion in that section.
Fractional powers of $q$ are required since the $R$-matrix takes the form $R = \tilde{R} \cdot D$, where $D(v_\lambda \otimes v_\mu) = q^{(\lambda, \mu)} v_\lambda \otimes v_\mu$ for weight vectors while $\tilde{R}$ has matrix coefficients in $\mathbb{Z}[q, q^{-1}]$.

Putting all together, we have shown that we can choose a basis of $\Phi(\Omega^\bullet)$ in such a way that the structure constants are in $\mathbb{Z}[q^{1/m}, q^{-1/m}]$ for all $0 < q < 1$. Since Laurent polynomials in $q^{1/m}$ are continuous for $q > 0$, the result also extends to $q = 1$.
\end{proof}

We are now ready to show that $\kappa$ is an almost symplectic form, as in \cref{def:almost}.

\begin{proposition}
\label{prop:lefschetz-iso}
The map $L^{M - k} : \Omega^k \to \Omega^{2 M - k}$ is an isomorphism for all $k = 0, \cdots, M - 1$, except possibly for finitely many values of $q$.
In particular this is true when $q$ is transcendental.
\end{proposition}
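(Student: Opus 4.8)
The plan is to transport the statement through Takeuchi's categorical equivalence and reduce it to the non-vanishing of a determinant whose entries lie in $\laupol$, which I would then settle by specializing at $q = 1$.

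First I would check that the Lefschetz map $L = L_\kappa$ restricts, for every $k$, to a morphism $\Omega^k \to \Omega^{k+2}$ in the category ${}^{\CqG}_{\subalg} \calM$: left $\subalg$-linearity of $L$ is precisely the centrality of $\kappa$ established in \cref{prop:kappa-props}, and left $\CqG$-colinearity follows from the coinvariance of $\kappa$ proved in \cref{lem:kappa-coinvariant}. Since $\Phi$ is an equivalence of categories, $L^{M-k}$ is an isomorphism if and only if $\Phi(L^{M-k}) = \Phi(L)^{M-k} \colon \Phi(\Omega^k) \to \Phi(\Omega^{2M-k})$ is. Moreover, since $\subalg^+ \Omega^\bullet = \Omega^\bullet \subalg^+$, the quotient $\algquo$ is an algebra and $\Omega^\bullet \to \algquo$ is multiplicative; hence $\Phi(L)$ is left multiplication by $\Phi(\kappa)$ and $\Phi(L)^{M-k}$ is left multiplication by $\Phi(\kappa)^{M-k}$.

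Next, because $\dim_\mathbb{C} \Phi(\Omega^k) = \binom{2M}{k} = \binom{2M}{2M-k} = \dim_\mathbb{C} \Phi(\Omega^{2M-k})$, the map $\Phi(L)^{M-k}$ is an isomorphism exactly when its matrix, with respect to fixed bases of source and target, has nonzero determinant. I would use the basis of $\algquo$ furnished by \cref{lem:basis-lau}, which has structure constants in $\laupol$ and which one may arrange to contain the degree-one elements $x_i, y_i$ ($i \in \IndP$). Combining \cref{lem:image-Phi} with the formula $\kappa = \iu \sum_{i,j} q^{(2\rho, \lambda_i)} \delbar z^{ij} \wedge \del z^{ji}$ from \cref{prop:kappa-props}, and using $\lambda_N = \omega_s$, one gets
\[
\Phi(\kappa) = \iu\, q^{(2\rho, \omega_s)} \sum_{i \in \IndP} y_i \wedge x_i .
\]
So, up to a nonzero scalar (a power of $\iu\, q^{(2\rho, \omega_s)}$), the coordinates of $\Phi(\kappa)^{M-k}$, and hence the entries of the matrix of $\Phi(L)^{M-k}$, lie in $\laupol$. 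Its determinant is therefore a nonzero scalar times some $p_k(q) \in \laupol$.

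It remains to prove that $p_k$ does not vanish identically, and for this I would specialize at $q = 1$. By the Heckenberger--Kolb description, at $q = 1$ the algebra $\algquo$ reduces to the classical exterior algebra on the $2M$-dimensional space spanned by $\{x_i, y_i : i \in \IndP\}$ (this is the de Rham complex of $\flagman$ seen through Takeuchi's equivalence), and the displayed formula shows that $\Phi(\kappa)$ reduces to $\iu$ times the $2$-form $\sum_{i \in \IndP} y_i \wedge x_i$, which is non-degenerate, i.e.\ symplectic, on that space. Classical symplectic linear algebra --- equivalently the hard Lefschetz property for a symplectic vector space, see \cite{huy} --- then gives that $L^{M-k} \colon \Lambda^k \to \Lambda^{2M-k}$ is an isomorphism at $q = 1$, so $p_k(1) \neq 0$. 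Thus each $p_k$ is a nonzero element of $\laupol$, hence has only finitely many zeros with $q > 0$; taking the union over the finitely many $k \in \{0, \dots, M-1\}$ shows that all the maps $L^{M-k}$ are isomorphisms for $q$ outside a finite set. Finally, for transcendental $q$ the element $q^{1/m}$ is transcendental, so no $p_k(q)$ can vanish, which gives the last assertion.

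The step I expect to be the main obstacle is the $q = 1$ specialization: one must ensure that the monomial basis of \cref{lem:basis-lau} stays a basis at $q = 1$ so that the determinant genuinely specializes, and that at $q = 1$ both the product of $\algquo$ and the element $\Phi(\kappa)$ really reduce to the classical exterior algebra and (a multiple of) its Kähler form. Both facts follow from the Heckenberger--Kolb analysis but need to be stated carefully; everything else is bookkeeping with the braiding identities from \cref{sec:braiding} and \cref{sec:braidingcalc}.
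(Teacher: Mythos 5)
Your proposal follows essentially the same route as the paper: transport the problem through Takeuchi's equivalence to $\Phi(L^{M-k})$, use the equality of dimensions to reduce to non-vanishing of a determinant whose entries lie in $\laupol$ by \cref{lem:basis-lau}, and settle this by specializing at $q = 1$, where $\algquo$ becomes the classical exterior algebra, $\Phi(\kappa)$ the standard symplectic form, and classical Lefschetz theory applies, with integrality of the coefficients handling the transcendental case. The only (immaterial) discrepancy is your prefactor $q^{(2\rho, \omega_s)}$ in $\Phi(\kappa)$ versus the paper's $q^{(2\rho, \lambda_i)}$ inside the sum; both lie in $\laupol$ and specialize to $1$ at $q = 1$, so the argument is unaffected.
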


\begin{proof}
Using Takeuchi's categorical equivalence, it is enough to prove the corresponding result for the linear map $\Phi(L^{M - k}) : \Phi(\Omega^k) \to \Phi(\Omega^{2 M - k})$.
Moreover, as $\Phi(\Omega^k)$ and $\Phi(\Omega^{2 M - k})$ have the same dimension, it suffices to show that $\Phi(L^{M - k})$ is injective.

We fix a basis for $\algquo$ as in \cref{lem:basis-lau} and we denote by $V^\bullet$ the underlying vector space.
In this way we have the same vector space $V^\bullet$ for all $0 < q \leq 1$, and we consider the multiplication of $\algquo$ as a linear map $V^\bullet \otimes V^\bullet \to V^\bullet$ depending on the parameter $q$.

In the same manner the map $\Phi(L^{M - k})$ can be seen as a linear map $V^k \to V^{2 M - k}$ depending on the parameter $q$, since it is given by the formula $\Phi(L^{M - k}) v = \Phi(\kappa^{M - k}) \wedge v$ for $v \in \Phi(\Omega^k)$.
In particular, using the formulae given in \cref{lem:image-Phi}, we obtain
\[
\Phi(\kappa) = \iu \sum_{i \in \IndP} q^{(2 \rho, \lambda_i)} y_i \wedge x_i.
\]
Let us write $\{ v_i^{(k)} \}_i$ for the fixed basis of $V^k$ for $k = 0, \cdots, 2 M$. Then we have
\[
\Phi(L^{M - k}) v_i^{(k)} = \sum_j c_i^j(q) v_j^{(2M - k)}.
\]
By \cref{lem:basis-lau} we have that all the coefficients $c_i^j(q)$ are in $\laupol$, since the structure constants for the multiplication in $\algquo$ satisfy this property.

Now let us write $M(q)$ for the matrix representing the linear map $\Phi(L^{M - k}): V^k \to V^{2 M - k}$.
Since $M(q)$ has entries $\{ c_i^j(q) \}_{i, j}$, it follows that $\det M(q)$ is a Laurent polynomial in $q^{1 / m}$.
Suppose that $\det M(1) \neq 0$, so that $\det M(q)$ is not the zero polynomial.
Under this assumption, $\det M(q)$ can only vanish for finitely many values of $q$.
Moreover, as the polynomial $\det M(q)$ has coefficients in $\mathbb{Z}$, it is never zero when $q$ is transcendental.

Hence we only need to show that $\det M(1) \neq 0$, that is the map $\Phi(L^{M - k}): V^k \to V^{2 M - k}$ is an isomorphism for $q = 1$.
In the classical limit the algebra $\algquo$ becomes isomorphic to the exterior algebra $\bigwedge(W_\mathbb{C})$, where $W = \mathrm{span}_\mathbb{R} \{ x_i, y_i : i \in \IndP \}$ is a real vector space of dimension $2 M$.
The vector space $W$ carries a natural symplectic structure and $\Phi(\kappa) = \iu \sum_{i \in \IndP} y_i \wedge x_i$ is its canonical form (up to a scalar).
Finally it follows from classical results that the map $\Phi(L^{M - k}) : V^k \to V^{2 M - k}$ is an isomorphism, see for instance \cite[Chapter 1.2]{huy}.
\end{proof}

Using this fact, it is immediate to prove the main result of this section.

\begin{theorem}
\label{thm:kahler}
The pair $(\Omega^{(\bullet, \bullet)}, \kappa)$ is a Kähler structure for $\Omega^\bullet$, except possibly for finitely many values of $q$.
In particular this is true when $q$ is transcendental.
\end{theorem}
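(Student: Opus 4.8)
The plan is to assemble the pieces already in place. By \cref{def:kahler}, a Kähler structure is a Hermitian structure whose Hermitian form is $\diff$-closed, and by \cref{def:hermitian} a Hermitian structure is a pair $(\Omega^{(\bullet, \bullet)}, \sigma)$ with $\sigma \in \Omega^{(1,1)}$ a central real almost symplectic form. So I would simply verify each clause for $\kappa$. First, \cref{prop:kappa-props} already establishes that $\kappa$ is closed, belongs to $\Omega^{(1,1)}$, and is central and real. What remains is that $\kappa$ is an \emph{almost symplectic form} in the sense of \cref{def:almost}, i.e.\ that the Lefschetz powers $L^{M-k} : \Omega^k \to \Omega^{2M-k}$ are isomorphisms for $k = 0, \dots, M-1$; but this is precisely the content of \cref{prop:lefschetz-iso}, valid for all but finitely many $q$ (and in particular for $q$ transcendental).

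Concretely, the proof I would write is short: combining \cref{prop:kappa-props} and \cref{prop:lefschetz-iso}, the $2$-form $\kappa$ is a central real almost symplectic form lying in $\Omega^{(1,1)}$, hence $(\Omega^{(\bullet,\bullet)}, \kappa)$ is a Hermitian structure; since $\kappa$ is moreover $\diff$-closed by \cref{prop:kappa-props}, it is a Kähler structure by \cref{def:kahler}. The restriction to all but finitely many values of $q$ (and the transcendental case) is inherited verbatim from \cref{prop:lefschetz-iso}, as that is the only ingredient carrying such a restriction.

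There is essentially no obstacle left at this stage: all the substantive work has been front-loaded into the lemmas and propositions of the section. The only things worth double-checking are bookkeeping points — that $M = \dim_{\mathbb C}(\lieg/\liep_S)$ is the correct half-dimension so that \cref{def:almost} applies with $n = M$ and $\Omega^\bullet$ having dimension $2M$, that the complex structure $\Omega^{(\bullet,\bullet)}$ referred to is the natural one from $\diff = \del + \delbar$ (already used in \cref{prop:kappa-props}(2)), and that "central" in \cref{def:almost} matches the centrality proved in \cref{prop:kappa-props}(3). If I wanted to be scrupulous I would also note explicitly that the finitely-many-bad-$q$ clause does not interfere with the $*$-structure: \cref{thm:star-calculi} holds for all $0 < q < 1$, so the only place the exceptional set enters is the Lefschetz isomorphism. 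With these remarks the theorem follows immediately, and the proof can be stated in two or three sentences.

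\begin{proof}
By \cref{prop:kappa-props}, the $2$-form $\kappa$ is central, real, closed, and belongs to $\Omega^{(1,1)}$.
By \cref{prop:lefschetz-iso}, the map $L^{M - k} : \Omega^k \to \Omega^{2 M - k}$ is an isomorphism for all $k = 0, \cdots, M - 1$, except possibly for finitely many values of $q$, and in particular whenever $q$ is transcendental.
Hence, away from this exceptional set, $\kappa$ is a central real almost symplectic form in the sense of \cref{def:almost}, so that $(\Omega^{(\bullet, \bullet)}, \kappa)$ is a Hermitian structure by \cref{def:hermitian}.
Since $\kappa$ is moreover $\diff$-closed, it follows from \cref{def:kahler} that $(\Omega^{(\bullet, \bullet)}, \kappa)$ is a Kähler structure for $\Omega^\bullet$.
\end{proof}
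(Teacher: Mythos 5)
Your proposal is correct and matches the paper's own proof: both simply combine \cref{prop:kappa-props} and \cref{prop:lefschetz-iso} and check the clauses of \cref{def:almost}, \cref{def:hermitian} and \cref{def:kahler}, with the restriction on $q$ coming solely from the Lefschetz isomorphism. No gaps; the extra bookkeeping remarks you include are consistent with the paper's setup.
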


\begin{proof}
This follows by combining \cref{prop:kappa-props} and \cref{prop:lefschetz-iso}.
We have that $\kappa$ is a central, real 2-form such that $L^{M -k}: \Omega^k \to \Omega^{2 M - k}$ is an isomorphism for $k = 0, \cdots, M - 1$, hence is an almost symplectic form according to \cref{def:almost}.
Next $\Omega^\bullet$ has a natural complex structure and $\kappa \in \Omega^{(1, 1)}$, hence it is a Hermitian form according to \cref{def:hermitian}.
Finally $\kappa$ is $\diff$-closed and hence a Kähler form according to \cref{def:kahler}.
\end{proof}

This proves the conjecture formulated in \cite[Conjecture 4.25]{obu17}, with the possible exception of finitely many values of $q$.
Since for the quantum projective spaces this result is valid for all $0 < q < 1$, we expect this to be the case in full generality.

\appendix

\section{Some identities for the braiding}
\label{sec:braiding}

In this appendix we will derive some identities for the components of the braiding, which are related to its behaviour under the operations of duality and adjoint.
These are used in the main text to prove the compatibility of the differential calculi with the $*$-structure.

\subsection{Basic facts}

Let $V$ and $W$ be finite-dimensional $\Uqg$-modules. Fix bases $\{v_i\}_i$ of $V$ and $\{w_i\}_i$ of $W$.
Then for the braiding $\hat{R}_{V, W} : V \otimes W \to W \otimes V$ we write
\[
\hat{R}_{V, W} (v_i \otimes w_j) = \sum_{k, l} (\hat{R}_{V, W})^{k l}_{i j} w_k \otimes v_l.
\]

Consider the double dual $\Uqg$-module $V^{**}$. To any vector $v \in V$ we can associate the linear functional $\widetilde{v} \in V^{**}$ on $V^*$ defined by $\widetilde{v}(f) := f(v)$.
The map $v \mapsto \widetilde{v}$ is an isomorphism of vector spaces but not of $\Uqg$-modules.
On the other hand, it is simple to check that the map $\eta_V : V \to V^{**}$ given by $\eta_V(v) := \widetilde{K_{2 \rho} v}$ is an isomorphism of $\Uqg$-modules.
This is because in our conventions we have $S^2(X) = K_{2 \rho} X K_{2 \rho}^{-1}$ for all $X \in \Uqg$.

We will also need $\Uqg$-invariant inner products.
Fixing $(\cdot, \cdot)_V$ on $V$, we will write $j_V : V \to V^*$ for the conjugate-linear map given by $j_V(v)(w) := (v, w)_V$. Then
\[
(j_V(v), j_V(w))_{V^*} := (K_{2 \rho} w, v)_V
\]
is an invariant inner product on $V^*$.
To check this claim one uses the fact that $(\cdot, \cdot)_V$ is invariant and the identities $S^2(X) = K_{2 \rho} X K_{2 \rho}^{-1}$ and $S(X)^* = S^{-1}(X^*)$.

\subsection{The identities}

We will now derive some identities for the components of the braiding $\hat{R}_{V, W}$ under the operations of duality and adjoint.

\begin{lemma}
\label{lem:dual-rels}
We have the identities
\[
(\hat{R}_{V^*, W})_{i j}^{k l} = (\hat{R}_{V, W}^{-1})_{j l}^{i k}, \quad
(\hat{R}_{W, V^*}^{-1})_{i j}^{k l} = (\hat{R}_{W, V})_{j l}^{i k}.
\]
\end{lemma}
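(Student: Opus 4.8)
The plan is to prove the two identities of \cref{lem:dual-rels} by using the defining property of the braiding together with the way the dual module $V^*$ is built from $V$. Since the braiding is characterized by being a $\Uqg$-module morphism satisfying the highest-weight/lowest-weight normalization, the natural strategy is to exhibit a $\Uqg$-module isomorphism that intertwines $\hat{R}_{V, W}$ on $V \otimes W$ with (the inverse of) the braiding on $V^* \otimes W$, and then to check that the normalization condition is preserved under this intertwining. The relevant isomorphism is the evaluation-coevaluation duality: the first identity should follow from the naturality of the braiding with respect to the evaluation map $\mathrm{ev}_V : V^* \otimes V \to \mathbb{C}$ (equivalently the coevaluation $\mathbb{C} \to V \otimes V^*$), which produces a relation between $\hat{R}_{V^*, W}$ and $\hat{R}_{V, W}^{-1}$ once one threads the $W$-strand through. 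A clean way to organize this is via the standard graphical calculus for ribbon categories: drawing $\hat{R}_{V^*, W}$ as a crossing and sliding the $V$-strand around a cap/cup turns it into $\hat{R}_{V, W}^{-1}$ with the indices transposed in exactly the pattern $(\hat{R}_{V^*,W})^{kl}_{ij} = (\hat{R}_{V,W}^{-1})^{ik}_{jl}$.

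Concretely, I would first recall the pairing $\langle f_i, v_j\rangle = \delta_{ij}$ between the chosen bases of $V^*$ and $V$, and write out the coevaluation element $\sum_m v_m \otimes f_m \in V \otimes V^*$ (noting that one must be careful about which double-dual identification is in force, since the paper works with $V^{**} \cong V$ via $\eta_V$; but for the statement of this lemma only $V$, $V^*$ and $W$ appear, so a single application of duality suffices). Then I would use the fact that $(\id_W \otimes \mathrm{ev}_V) \circ (\hat R_{W,V^*}^{-1} \otimes \id_V) \circ (\id_{V^*} \otimes \hat R_{V,W}) \circ (\mathrm{coev}_V \otimes \id_W \text{-type rearrangement})$ collapses to an identity-type map — more precisely, that $\hat R_{V^*, W}$ is obtained from $\hat R_{V,W}$ by "bending" the $V$-leg using $\mathrm{ev}$ and $\mathrm{coev}$, which in components is precisely the asserted formula. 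The second identity is entirely parallel, bending instead along the $W$-side or using $\hat R^{-1}$ in place of $\hat R$; alternatively it follows from the first by applying it with the roles adjusted and using $\hat R_{A,B}^{-1} = (\hat R_{B,A})^{-1}$ conventions, so I would state it as a symmetric variant and only indicate the change.

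An alternative, possibly shorter route is to avoid graphical calculus and argue representation-theoretically: both $\hat R_{V^*,W}$ and the map with components $(\hat R_{V,W}^{-1})^{ik}_{jl}$ (suitably interpreted as a map $V^*\otimes W \to W\otimes V^*$) are $\Uqg$-module morphisms — the first by definition, the second because taking duals and inverses of module morphisms again yields module morphisms — so by Schur-type uniqueness it is enough to compare them on a single well-chosen vector, e.g. $f \otimes w_{\mathrm{lw}}$ where $f$ is the lowest-weight vector of $V^*$ (which is the image of the highest-weight vector of $V$ under duality, up to the $K_{2\rho}$-twist). Matching the scalar $q^{(\mathrm{wt},\mathrm{wt})}$ on that vector, using $\wt(f_i) = -\wt(v_i)$ and the normalization in the excerpt, should pin down the constant and finish the argument. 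I expect the main obstacle to be purely bookkeeping: getting the index positions and the up/down placement exactly right, and making sure the $K_{2\rho}$ twist in the identification $V^{**}\cong V$ either cancels or is irrelevant for this particular lemma (it should be, since the twist is a diagonal weight operator and commutes through all the relevant maps, contributing reciprocal scalars that cancel). Nothing here is conceptually deep; the care needed is in the conventions, which is why I would lean on the graphical calculus to make the index transpositions transparent.
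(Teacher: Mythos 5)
Your main argument---expressing $\hat{R}_{V^*,W}$ (and $\hat{R}_{W,V^*}^{-1}$) by bending the $V$-leg with $\mathrm{ev}_V$ and $\mathrm{coev}_V$, i.e.\ the standard rigidity/naturality identity relating the braiding on a dual object to the inverse braiding, followed by a component computation---is exactly the route the paper takes, which cites this identity from \cite[Lemma 8.9.1]{egno16} and then evaluates it on basis vectors to read off the index pattern. So the proposal is correct and essentially coincides with the paper's proof; the alternative ``Schur-type'' sketch is not needed and is the less careful part, but the primary approach stands as is.
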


\begin{proof}
We recall a general result valid for (strict) braided monoidal categories with duals from \cite{egno16}.
Let $\mathcal{C}$ be a braided monoidal category with braiding $c$.
Let $X, Y$ be objects of $\mathcal{C}$ and let $X^*$ be the left dual of $X$.
Then according to \cite[Lemma 8.9.1]{egno16} we have
\begin{equation}
\label{eq:egno-rels}
\begin{split}
c_{X^*, Y} & = (\mathrm{ev}_X \otimes \id_{Y \otimes X^*}) \circ (\id_{X^*} \otimes c_{X, Y}^{-1} \otimes \id_{X^*}) \circ (\id_{X^* \otimes Y} \otimes \mathrm{coev}_X), \\
c_{Y, X^*}^{-1} & = (\mathrm{ev}_X \otimes \id_{Y \otimes X^*}) \circ (\id_{X^*} \otimes c_{Y, X} \otimes \id_{X^*}) \circ (\id_{X^* \otimes Y} \otimes \mathrm{coev}_X).
\end{split}
\end{equation}
Here $\mathrm{ev_X}: X^* \otimes X \to 1$ and $\mathrm{coev_X}: 1 \to X \otimes X^*$ are the evaluation and coevaluation morphisms associated with the left dual object $X^*$.

We apply this to the category of finite-dimensional $\Uqg$-modules.
In this case, given a $\Uqg$-module $V$, the evaluation and coevaluation morphisms are given by
\[
\mathrm{ev}_V (f \otimes v) = f(v), \quad
\mathrm{coev}_V (1) = \sum_i v_i \otimes f_i.
\]
Here $v \in V$ and $f \in V^*$, while $\{v_i\}_i$ is a basis of $V$ and $\{f_i\}_i$ is a dual basis of $V$.

Now consider $\hat{R}_{V^*, W}: V^* \otimes W \to W \otimes V$.
Using the first relation of \eqref{eq:egno-rels} we compute
\[
\begin{split}
\hat{R}_{V^*, W} (f_i \otimes w_j) & = (\mathrm{ev}_V \otimes \id_{W \otimes V^*}) \circ (\id_{V^*} \otimes \hat{R}_{V, W}^{-1} \otimes \id_{V^*}) (f_i \otimes w_j \otimes \sum_k v_k \otimes f_k) \\
& = \sum_k (\mathrm{ev}_V \otimes \id_{W \otimes V^*}) (f_i \otimes \sum_{a, b} (\hat{R}_{V, W}^{-1})_{j k}^{a b} v_a \otimes w_b \otimes f_k) \\
& = \sum_{b, k} (\hat{R}_{V, W}^{-1})_{j k}^{i b} w_b \otimes f_k.
\end{split}
\]
Comparing this to $\hat{R}_{V^*, W} (f_i \otimes w_j) = \sum_{b, k} (\hat{R}_{V^*, W})_{i j}^{b k} w_b \otimes f_k$ we get the first identity.
The second identity follows from the second relation of \eqref{eq:egno-rels} in a similar way.
\end{proof}

Next we derive some identities involving double duals.

\begin{lemma}
\label{lem:ddual-rels}
We have the identities
\[
(\hat{R}_{V^{**}, W})_{i j}^{k l} = q^{(2 \rho, \wt(v_l) - \wt(v_i))} (\hat{R}_{V, W})_{i j}^{k l}, \quad
(\hat{R}_{V, W^{**}})_{i j}^{k l} = q^{(2 \rho, \wt(w_k) - \wt(w_j))} (\hat{R}_{V, W})_{i j}^{k l}.
\]
\end{lemma}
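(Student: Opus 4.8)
The plan is to prove \cref{lem:ddual-rels} by exploiting the fact, recalled just before the statement, that the map $\eta_V : V \to V^{**}$ given by $\eta_V(v) = \widetilde{K_{2\rho} v}$ is an isomorphism of $\Uqg$-modules. Since the braiding is natural, it is compatible with this isomorphism: we have
\[
\hat{R}_{V^{**}, W} \circ (\eta_V \otimes \id_W) = (\id_W \otimes \eta_V) \circ \hat{R}_{V, W}.
\]
So the whole content of the lemma is simply a matter of writing out this naturality square in components, keeping careful track of the weight-dependent scalar that $\eta_V$ introduces.

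Concretely, first I would fix a weight basis $\{v_i\}_i$ of $V$, so that $K_{2\rho} v_i = q^{(2\rho, \wt(v_i))} v_i$, and observe that $\eta_V(v_i) = q^{(2\rho,\wt(v_i))} \widetilde{v_i}$, where $\{\widetilde{v_i}\}_i$ is the basis of $V^{**}$ dual to the dual basis $\{f_i\}_i$ of $V^*$. Using this basis $\{\widetilde{v_i}\}_i$ to compute components of $\hat{R}_{V^{**}, W}$, I would apply the naturality identity to $v_i \otimes w_j$: the left-hand side gives $q^{(2\rho,\wt(v_i))} \hat{R}_{V^{**}, W}(\widetilde{v_i} \otimes w_j) = q^{(2\rho,\wt(v_i))} \sum_{k,l} (\hat{R}_{V^{**}, W})^{kl}_{ij} \, w_k \otimes \widetilde{v_l}$, while the right-hand side gives $(\id_W \otimes \eta_V)\big(\sum_{k,l} (\hat{R}_{V,W})^{kl}_{ij} w_k \otimes v_l\big) = \sum_{k,l} (\hat{R}_{V,W})^{kl}_{ij} \, q^{(2\rho,\wt(v_l))} w_k \otimes \widetilde{v_l}$. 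Comparing coefficients of $w_k \otimes \widetilde{v_l}$ yields $q^{(2\rho,\wt(v_i))} (\hat{R}_{V^{**},W})^{kl}_{ij} = q^{(2\rho,\wt(v_l))} (\hat{R}_{V,W})^{kl}_{ij}$, which is the first identity. The second identity is obtained in exactly the same way, applying naturality to $\eta_W : W \to W^{**}$ on the second tensor factor instead.

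There is essentially no hard part here; the only thing to be careful about is the bookkeeping of which basis of $V^{**}$ is being used to extract components (the $\Uqg$-module structure on $V^{**}$ relevant for the braiding is the genuine double-dual one, not the one transported via $\eta_V$), and making sure the scalars land on the correct side. One should also note that the argument does not require $V$ or $W$ to be simple, nor any choice of inner product — pure naturality of the braiding together with the explicit form of $\eta_V$ suffices.
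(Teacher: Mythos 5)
Your argument is correct and is essentially the paper's own proof: both rest on naturality of the braiding with respect to the module isomorphism $\eta_V(v)=\widetilde{K_{2\rho}v}$ (resp.\ $\eta_W$), followed by reading off components in a weight basis, where $\eta_V(v_i)=q^{(2\rho,\wt(v_i))}\widetilde{v_i}$ produces exactly the stated weight factors. The only cosmetic difference is that you apply the naturality square directly to $v_i\otimes w_j$ rather than writing $\hat{R}_{V^{**},W}=(\id_W\otimes\eta_V)\circ\hat{R}_{V,W}\circ(\eta_V^{-1}\otimes\id_W)$ and evaluating on $\widetilde{v_i}\otimes w_j$, which is the same computation.
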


\begin{proof}
Consider the $\Uqg$-module isomorphism $\eta_V: V \to V^{**}$ defined previously.
Then using naturality of the braiding $\hat{R}_{V, W}$ we obtain
\[
\hat{R}_{V^{**}, W} = (\id_W \otimes \eta_V) \circ \hat{R}_{V, W} \circ (\eta_V^{-1} \otimes \id_W).
\]
Using the fact that $\eta_V(v_i) = q^{(2 \rho, \wt(v_i))} \widetilde{v_i}$ we compute
\[
\begin{split}
\hat{R}_{V^{**}, W} (\widetilde{v_i} \otimes w_j) & = (\id_W \otimes \eta_V) \circ \hat{R}_{V, W} (q^{-(2 \rho, \wt(v_i))} v_i \otimes w_j) \\
& = q^{-(2 \rho, \wt(v_i))} (\id_W \otimes \eta_V) (\sum_{k, l} (\hat{R}_{V, W})_{i j}^{k l} w_k \otimes v_l) \\
& = q^{-(2 \rho, \wt(v_i))} q^{(2 \rho, \wt(v_i))} \sum_{k, l} (\hat{R}_{V, W})_{i j}^{k l} w_k \otimes \widetilde{v_l}.
\end{split}
\]
This gives the first identity.
The second identity is proven similarly by considering the identity $\hat{R}_{V, W^{**}} = (\eta_W \otimes \id_V) \circ \hat{R}_{V, W} \circ (\id_V \otimes \eta_W^{-1})$, which also follows by naturality.
\end{proof}

Finally we derive some identities which combine those derive above with complex conjugation.
These identities are used to prove the main result of \cref{sec:heko}.

\begin{proposition}
\label{prop:braiding-components}
Let $\{v_i\}_i$ be an orthonormal basis of $V$ and $\{f_i\}_i$ the dual basis of $V^*$.
\begin{enumerate}
\item We have $\overline{(\hat{R}_{V, V})^{k l}_{i j}} = (\hat{R}_{V^*, V^*})^{l k}_{j i}$.
\item We have $\overline{(\hat{R}_{V, V^*})^{k l}_{i j}} = (\hat{R}_{V, V^*})^{l k}_{j i}$.
\end{enumerate}
\end{proposition}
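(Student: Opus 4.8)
The plan is to relate the braiding on $V^* \otimes V^*$ (resp. $V \otimes V^*$) to that on $V \otimes V$ (resp. $V^* \otimes V$) by combining the duality identities of \cref{lem:dual-rels}, the double-dual identities of \cref{lem:ddual-rels}, and the behaviour of the braiding under the $*$-operation, which in turn is governed by the invariant inner product on $V$. The key structural input is the $\Uqg$-invariant inner product: since $\{v_i\}_i$ is orthonormal, the conjugate-linear map $j_V : V \to V^*$ from the preliminaries sends $v_i \mapsto f_i$, and this intertwines the module structures up to the $K_{2\rho}$-twist recorded in the formula $(j_V(v), j_V(w))_{V^*} = (K_{2\rho} w, v)_V$. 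So complex conjugation of matrix coefficients of a morphism should, roughly, translate into passing to the dual (or double-dual) object, with a $q^{(2\rho, \cdot)}$ correction coming from $K_{2\rho}$.

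Concretely, for part (1) I would argue as follows. First, since $\hat R_{V,W}$ is a $\Uqg$-module morphism and the inner products on $V, W$ are invariant, the ``adjoint'' of $\hat R_{V,W}$ with respect to the induced inner product on $W \otimes V$ versus $V \otimes W$ is $\hat R_{V,W}^{-1}$ (unitarity of the braiding up to the inner product subtlety). Writing this out in the orthonormal bases, taking adjoints introduces complex conjugation of the matrix entries and swaps the roles of the ``in'' and ``out'' index pairs, giving a relation of the shape $\overline{(\hat R_{V,V})^{kl}_{ij}} = (\hat R^{-1}_{V,V})^{?}_{?}$ up to $q^{(2\rho,\cdot)}$ factors from the $K_{2\rho}$-twisted inner product on the tensor factors. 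Then I would eliminate the inverse and the double-dual twist: \cref{lem:dual-rels} rewrites $\hat R^{-1}_{V,V}$ in terms of $\hat R_{V^*, V}$ (and $\hat R_{V^*, V^*}$), and \cref{lem:ddual-rels} absorbs the $q^{(2\rho,\cdot)}$ prefactors, since those are exactly the factors relating $\hat R_{V,V}$ to $\hat R_{V^{**}, V}$ or $\hat R_{V, V^{**}}$. The bookkeeping of which weight differences appear — $\wt(v_l) - \wt(v_i)$ on one side, $\wt(v_k) - \wt(v_j)$ on the other — has to match up so that the net correction cancels, leaving the clean identity $\overline{(\hat R_{V,V})^{kl}_{ij}} = (\hat R_{V^*, V^*})^{lk}_{ji}$.

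For part (2) the same strategy applies, but now one of the two tensor factors is already $V^*$, so $j_V$ is applied only on the $V$-factor and the identity one lands on after conjugation already involves $\hat R_{V, V^*}$ on both sides (hence the absence of a dual/double-dual swap and the self-referential form $\overline{(\hat R_{V,V^*})^{kl}_{ij}} = (\hat R_{V,V^*})^{lk}_{ji}$); again \cref{lem:dual-rels} and \cref{lem:ddual-rels} are invoked to clear the inverse braiding and the $q^{(2\rho,\cdot)}$ twist. I expect the main obstacle to be neither conceptual nor computational in isolation but rather the careful tracking of three interacting sources of index/weight asymmetry simultaneously: the index transposition $(ij) \leftrightarrow (lk)$ coming from taking an adjoint, the index transposition built into \cref{lem:dual-rels}, and the weight-dependent scalars from the non-trivial $\Uqg$-module structure on double duals. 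Getting all the $K_{2\rho}$-factors to cancel — and in particular checking that the inner product on $V^*$ is the one that makes $\{f_i\}$ orthonormal only after the $K_{2\rho}$-rescaling, so that the ``obvious'' dual basis is \emph{not} orthonormal — is the delicate point; once the cancellation is set up correctly the rest is routine.
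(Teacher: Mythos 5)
There is a genuine gap at the very first step: you take as your structural input that the braiding is unitary for the $\Uqg$-invariant inner products, i.e.\ that the adjoint of $\hat{R}_{V,W}$ is $\hat{R}_{V,W}^{-1}$ (up to $K_{2\rho}$-corrections). For $0<q<1$ this is false: the correct identity, and the one the paper's proof is built on (citing \cite[Example 2.6.4]{netu13}), is $\hat{R}_{V,W}^* = \hat{R}_{W,V}$, the braiding with the two factors interchanged, not the inverse. The two differ by the monodromy $\hat{R}_{W,V}\hat{R}_{V,W} \neq \id$; concretely, $\hat{R}_{V,V}$ is self-adjoint in an orthonormal basis, with real eigenvalues of the form $\pm q^{c}$, $c \neq 0$, so it cannot be unitary, and an unproven diagonal $q^{(2\rho,\cdot)}$-rescaling cannot be invoked to repair this. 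Since every later step of your plan (which weight factors appear and whether they cancel after \cref{lem:dual-rels} and \cref{lem:ddual-rels}) is bookkeeping downstream of this conjugation rule, the argument as written does not go through. A symptom of the wrong starting point is that your plan for item (1) calls on \cref{lem:ddual-rels} and on cancellations of $q^{(2\rho,\cdot)}$ factors, which are in fact unnecessary: with the correct rule one gets $\overline{(\hat{R}_{V,V})^{kl}_{ij}} = (\hat{R}_{V,V})^{ij}_{kl}$ with no weight factors at all, and then two applications of \cref{lem:dual-rels} (first $(\hat{R}_{V,V})^{ij}_{kl} = (\hat{R}^{-1}_{V,V^*})^{jl}_{ik}$, then converting to $(\hat{R}_{V^*,V^*})^{lk}_{ji}$) already finish item (1).

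For item (2) your instinct about the delicate point is right and matches the paper: the dual basis $\{f_i\}$ is not orthonormal, since $(f_i,f_j)_{V^*} = \delta_{ij}q^{(2\rho,\wt v_i)}$, so one must pass to $f_i' = q^{-(\rho,\wt v_i)}f_i$ before applying any adjoint identity. But contrary to what you suggest, it is precisely here, and not in item (1), that \cref{lem:ddual-rels} is needed: the correct conjugation rule produces components of $\hat{R}_{V^*,V}$, which \cref{lem:dual-rels} converts into components of $\hat{R}_{V^{**},V^*}$, and the double-dual twist of \cref{lem:ddual-rels} then exactly absorbs the residual $q^{(2\rho,\wt v_j - \wt v_k)}$ factor coming from the basis rescaling. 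So the fix is to replace the unitarity claim by $\hat{R}_{V,W}^* = \hat{R}_{W,V}$ and redo the index and weight bookkeeping from there; as it stands, the central identity your proposal relies on is not available.
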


\begin{proof}
(1) We have the identity $\hat{R}_{V, W}^* = \hat{R}_{W, V}$, see for instance \cite[Example 2.6.4]{netu13}.
From this is easily follows that $\overline{(\hat{R}_{V, W})^{k l}_{i j}} = (\hat{R}_{W, V})^{i j}_{k l}$, provided that we use orthonormal bases for $V$ and $W$.
Using this fact and \cref{lem:dual-rels} we compute
\[
\overline{(\hat{R}_{V, V})^{k l}_{i j}} = (\hat{R}_{V, V})^{i j}_{k l} = (\hat{R}_{V, V^*}^{-1})^{j l}_{i k} = (\hat{R}_{V^*, V^*})^{l k}_{j i}.
\]

(2) First we observe that the dual basis $\{f_i\}_i$ is not orthonormal. Indeed, using the fact that $f_i = j_V(v_i)$ and the definition of the inner product on $V^*$, we compute
\[
(f_i, f_j)_{V^*} = (K_{2 \rho} v_j, v_i)_V = \delta_{i j} q^{(2 \rho, \wt v_i)}.
\]
Hence we obtain an orthonormal basis by setting $f_i^\prime = q^{-(\rho, \wt v_i)} f_i$.
We write the formulae for the braidings with respect to the orthonormal bases $\{v_i\}_i$ and $\{f_i^\prime\}_i$ as follows
\[
\hat{R}_{V, V^*} (v_i \otimes f_j^\prime) = \sum_{k, l} a^{k l}_{i j} f_k^\prime \otimes v_l, \quad
\hat{R}_{V^*, V} (f_i^\prime \otimes v_j) = \sum_{k, l} b^{k l}_{i j} v_k \otimes f_l^\prime.
\]
From these we immediately get the relations
\[
(\hat{R}_{V, V^*})^{k l}_{i j} = q^{(\rho, \wt v_j - \wt v_k)} a^{k l}_{i j}, \quad
(\hat{R}_{V^*, V})^{k l}_{i j} = q^{(\rho, \wt v_i - \wt v_l)} b^{k l}_{i j}.
\]
Then using the identity $\overline{(\hat{R}_{V, W})^{k l}_{i j}} = (\hat{R}_{W, V})^{i j}_{k l}$ for orthonormal bases we get
\[
\overline{(\hat{R}_{V, V^*})^{k l}_{i j}} = q^{(\rho, \wt v_j - \wt v_k)} \overline{a^{k l}_{i j}} = q^{(\rho, \wt v_j - \wt v_k)} b^{i j}_{k l} = q^{(2 \rho, \wt v_j - \wt v_k)} (\hat{R}_{V^*, V})^{i j}_{k l}.
\]
Using again the identities from \cref{lem:dual-rels} we get $(\hat{R}_{V^*, V})^{i j}_{k l} = (\hat{R}_{V^{**}, V^*})^{l k}_{j i}$.
Finally we get rid of the double dual by using \cref{lem:ddual-rels} and obtain
\[
\overline{(\hat{R}_{V, V^*})^{k l}_{i j}} = q^{(2 \rho, \wt v_j - \wt v_k)} (\hat{R}_{V^{**}, V^*})^{l k}_{j i} = (\hat{R}_{V, V^*})^{l k}_{j i}. \qedhere
\]
\end{proof}

\section{Differential calculus identities}
\label{sec:braidingcalc}

In this appendix we will derive some identities related to the differential calculus $\calc$.
According to \cite[Proposition 3.3 (ii)]{heko06} and \cite[Proposition 3.4 (ii)]{heko06}, the right $\subalg$-module structures of the FODCs $\calcDel$ and $\calcDelb$ are given by the formulae
\[
\del(z) z = q^{(\alpha_s, \alpha_s)} \braidz z \del(z), \quad
\delbar(z) z = q^{-(\alpha_s, \alpha_s)} \braidz z \delbar(z),
\]
where the indices are suppressed and we define the linear map
\[
\braidz := (\hat{R}_{V, V^*})_{2 3} (\hat{R}_{V, V})_{1 2} (\hat{R}_{V^*, V^*}^{-1})_{3 4} (\hat{R}_{V, V^*}^{-1})_{2 3}.
\]
Writing out the indices explicitly for the first relation, we have
\[
\del(z^{i j}) z^{k l} = q^{(\alpha_s, \alpha_s)} \sum_{a, b, c, d} T^{i j k l}_{a b c d} z^{a b} \del(z^{c d}).
\]
where the components of $\braidz$ are given by
\[
T_{a b c d}^{i j k l} = \sum_{p, q, r, s} (\hat{R}_{V, V^*})_{p q}^{j k} (\hat{R}_{V, V})_{a r}^{i p} (\hat{R}_{V^*, V^*}^{-1})_{s d}^{q l} (\hat{R}_{V, V^*}^{-1})_{b c}^{r s}.
\]
Our aim is to derive some identities for the map $\braidz$.

\begin{lemma}
\label{lem:idT1}
We have the identity
\[
\sum_{j, k, l} \braidz_{a b c d}^{i j k l} \braidz_{k l c^\prime d^\prime}^{j j^\prime k^\prime l^\prime} = \delta_{d c^\prime} \braidz_{a b c d^\prime}^{i j^\prime k^\prime l^\prime}.
\]
\end{lemma}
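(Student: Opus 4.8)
The plan is to interpret the claimed identity as an operator relation for the map $\braidz$ acting on a tensor product of four spaces, rather than grinding through the index sums directly. Recall that $\braidz = (\hat{R}_{V, V^*})_{2 3} (\hat{R}_{V, V})_{1 2} (\hat{R}_{V^*, V^*}^{-1})_{3 4} (\hat{R}_{V, V^*}^{-1})_{2 3}$ acts on $V^* \otimes V \otimes V^* \otimes V$ — more precisely, reading off the index pattern $T^{ijkl}_{abcd}$, the ``input'' legs $ij$ and $kl$ each carry a $(V^*, V)$-type pair and the output legs $ab$ and $cd$ likewise. The identity
\[
\sum_{j, k, l} \braidz_{a b c d}^{i j k l} \braidz_{k l c^\prime d^\prime}^{j j^\prime k^\prime l^\prime} = \delta_{d c^\prime} \braidz_{a b c d^\prime}^{i j^\prime k^\prime l^\prime}
\]
is then the statement that composing $\braidz$ with itself ``in series'' — plugging the $cd$-output pair of the first copy into the $kl$-input pair of the second copy, while the contraction $\delta_{dc'}$ ties the $V$-leg $d$ of the first to the $V^*$-leg $c'$ of the second — reproduces a single $\braidz$ on the composite legs. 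This is the combinatorial shadow of the fact that $\del(z)z$ defines an associative-type bimodule structure: the relation $\del(z^{ij})z^{kl} = q^{(\alpha_s,\alpha_s)}\sum T^{ijkl}_{abcd} z^{ab}\del(z^{cd})$ must be compatible with $z^{kl}z^{k'l'} = \delta_{l k'}\sum(\text{something}) z^{\cdot\cdot}$ type contractions coming from $\sum_m z^{im}z^{mj} = z^{ij}$ in $\subalg$, and in fact with moving two copies of $z$ past $\del(z)$ in two different orders.

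The key steps, in order, would be: (i) unfold $\braidz$ into its four braiding factors on both sides of the claimed identity and set up the picture on $V^* \otimes V \otimes V^* \otimes V \otimes V^* \otimes V$ (six legs), where the middle $\delta_{dc'}$ corresponds to an evaluation-type pairing contracting a $V$ with a $V^*$; (ii) use the hexagon/Yang–Baxter relations for $\hat R$, i.e. $\hat R_{U\otimes V, W} = (\hat R_{U,W})_{\cdot}(\hat R_{V,W})_{\cdot}$ and its inverse-braiding analogue, together with naturality of the braiding with respect to the evaluation morphism $\mathrm{ev}_V : V^* \otimes V \to \mathbf{1}$, to slide the contracted pair through and collapse two of the $\hat R$-factors; (iii) after the dust settles, recognize the remaining composite of braidings on the four surviving legs as exactly the definition of $\braidz^{i j' k' l'}_{a b c d'}$. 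A cleaner alternative for step (iii): derive the identity purely algebraically from the calculus. Namely, from $\del(z^{ij})z^{kl}z^{k'l'} = q^{2(\alpha_s,\alpha_s)}\sum T^{ijkl}_{abcd}T^{cd\,k'l'}_{\cdot\cdot\cdot\cdot}\, z^{ab}z^{\cdot\cdot}\del(z^{\cdot\cdot})$ computed one way, versus first using $\sum_l z^{kl}z^{l\,k'} = z^{kk'}$ (so $z^{kl}z^{k'l'}$ with $l$ contracted to $k'$ via $\delta$) and then applying the single relation for $\del(z^{ij})z^{k k'}$; since the Heckenberger–Kolb bimodule structure is well-defined, these must agree, and comparing coefficients of a fixed basis element $z^{ab}\del(z^{c d'})$ (using that the monomials $z^{\cdot\cdot}\del(z^{\cdot\cdot})$ are suitably independent, or working in $\Phi(\Omega^1)$ where this is transparent) yields precisely the claimed relation.

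The main obstacle I expect is bookkeeping: matching the index placements in $T^{ijkl}_{abcd}$ — which four of the many $\hat R$-indices are contracted internally (the $p,q,r,s$) versus externally, and getting the $\delta_{dc'}$ contraction to land on the correct pair of legs after the hexagon manipulations — is error-prone, and a sign or a transposed index would break the identity. The representation-theoretic route (step iii, algebraic version) sidesteps most of this by leaning on the already-established well-definedness of $\calcDel$ as an $\subalg$-bimodule, reducing the problem to a consistency statement about two ways of reassociating $\del(z)\cdot z \cdot z$; I would lead with that, and only fall back on the explicit braiding computation if the independence of the relevant monomials needs to be pinned down carefully in $\Phi(\Omega^\bullet)$.
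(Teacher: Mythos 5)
Your lead argument (the ``algebraic version'' of step (iii)) has a genuine gap. First, the independence claim it rests on is false: the monomials $z^{ab}\del(z^{cd})$ are very far from linearly independent in $\calcDel$ --- the calculus is \emph{defined} by nontrivial relations among precisely these elements, namely \eqref{eq:GammaP-rels}, \eqref{eq:GammaP-bimod}, \eqref{eq:GammaPC-bimod} and the quotient by $\Lambda_+$ --- and passing to $\Phi(\Omega^1)$ makes matters worse, since by \cref{lem:image-Phi} most classes $[\del z^{ij}]$ vanish there. So from an equality of elements of the calculus you cannot compare coefficients and recover a componentwise identity for the tensor $T$; and the lemma is needed precisely as a tensor identity (it is substituted inside the coefficient contractions in \cref{lem:triple-wedge}). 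Second, the contraction pattern is misread: on the left-hand side nothing ties the output pair $c,d$ of the first factor to the second factor; rather the \emph{upper} indices $k,l$ of the first factor are contracted with the first two \emph{lower} indices of the second, and the upper index $j$ is shared by both. In calculus terms this composite arises from moving a single $z$ leftwards through two index-chained differentials (as in $\sum_j \del z^{ij}\wedge\delbar z^{jk}$), not from reassociating $\del(z)\,z\,z$; so the consistency statement that well-definedness of the bimodule structure hands you --- which contracts the output pair $c,d$ of one $T$ into the input pair of the next --- is a different identity from the one claimed, and the lemma does not follow formally from associativity at all.

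Your fallback route is closer to what the paper actually does, but as stated it is only a sketch and invokes the wrong tools: no hexagon or Yang--Baxter relation is needed. The paper's proof simply expands both copies of $T$ into their four braiding factors and then uses the duality identities of \cref{lem:dual-rels} (which are indeed proved via the evaluation/coevaluation naturality you mention) to rewrite $(\hat{R}_{V,V})^{j p'}_{k r'}$ as $(\hat{R}^{-1}_{V,V^*})^{p' r'}_{j k}$ and $(\hat{R}^{-1}_{V,V^*})^{q s'}_{l c'}$ as $(\hat{R}_{V^*,V^*})^{s' c'}_{q l}$, so that the sums over $j,k$ and over $q,l$ collapse against the adjacent factors via $\hat{R}\hat{R}^{-1}=\id$, producing the Kronecker delta $\delta_{d c'}$ and leaving exactly the four factors of $T^{i j' k' l'}_{a b c d'}$. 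To make your proposal into a proof you would have to carry out exactly this collapse; the graphical/operator framing is fine, but the specific cancellations (and the correct identification of which legs are contracted) are the whole content of the lemma.
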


\begin{proof}
Writing out the left-hand side of the equation we have
\[
\begin{split}
\sum_{j, k, l} \braidz_{a b c d}^{i j k l} \braidz_{k l c^\prime d^\prime}^{j j^\prime k^\prime l^\prime} & = \sum_{j, k, l} \sum_{p, q, r, s} (\hat{R}_{V, V^*})_{p q}^{j k} (\hat{R}_{V, V})_{a r}^{i p} (\hat{R}_{V^*, V^*}^{-1})_{s d}^{q l} (\hat{R}_{V, V^*}^{-1})_{b c}^{r s} \\
& \times \sum_{p^\prime, q^\prime, r^\prime, s^\prime} (\hat{R}_{V, V^*})_{p^\prime q^\prime}^{j^\prime k^\prime} (\hat{R}_{V, V})_{k r^\prime}^{j p^\prime} (\hat{R}_{V^*, V^*}^{-1})_{s^\prime d^\prime}^{q^\prime l^\prime} (\hat{R}_{V, V^*}^{-1})_{l c^\prime}^{r^\prime s^\prime}.
\end{split}
\]
Using \cref{lem:dual-rels} we get $(\hat{R}_{V, V})_{k r^\prime}^{j p^\prime} = (\hat{R}_{V, V^*}^{-1})_{j k}^{p^\prime r^\prime}$. Then summing over $j, k$ we obtain
\[
\begin{split}
\sum_{j, k, l} \braidz_{a b c d}^{i j k l} \braidz_{k l c^\prime d^\prime}^{j j^\prime k^\prime l^\prime} & = \sum_l \sum_{p, q, r, s} (\hat{R}_{V, V})_{a r}^{i p} (\hat{R}_{V^*, V^*}^{-1})_{s d}^{q l} (\hat{R}_{V, V^*}^{-1})_{b c}^{r s} \\
& \times \sum_{q^\prime, s^\prime} (\hat{R}_{V, V^*})_{p q^\prime}^{j^\prime k^\prime} (\hat{R}_{V^*, V^*}^{-1})_{s^\prime d^\prime}^{q^\prime l^\prime} (\hat{R}_{V, V^*}^{-1})_{l c^\prime}^{q s^\prime}.
\end{split}
\]
Next, we get $(\hat{R}_{V, V^*}^{-1})_{l c^\prime}^{q s^\prime} = (\hat{R}_{V^*, V^*})_{q l}^{s^\prime c^\prime}$ again by \cref{lem:dual-rels}. Summing over $q, l$ we obtain
\[
\sum_{j, k, l} \braidz_{a b c d}^{i j k l} \braidz_{k l c^\prime d^\prime}^{j j^\prime k^\prime l^\prime} = \delta_{d, c^\prime} \sum_{p, r, s} (\hat{R}_{V, V})_{a r}^{i p} (\hat{R}_{V, V^*}^{-1})_{b c}^{r s} \sum_{q^\prime} (\hat{R}_{V, V^*})_{p q^\prime}^{j^\prime k^\prime} (\hat{R}_{V^*, V^*}^{-1})_{s d^\prime}^{q^\prime l^\prime}.
\]
Upon relabeling we obtain the identity we were after.
\end{proof}

Finally we derive an identity for a particular contraction of the tensor $T$.

\begin{lemma}
\label{lem:idT2}
We have the identity
\[
\sum_{i, j} q^{(2 \rho, \lambda_i)} T_{a b c d}^{i j j i} = \delta_{a d} \delta_{b c} q^{(2 \rho, \lambda_a)}.
\]
\end{lemma}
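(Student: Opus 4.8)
The plan is to prove the identity by a direct contraction: rewrite $T$ purely in terms of $\hat{R}_{V, V}$, then carry out the summations one index at a time, recognising the surviving partial sums as partial (quantum) traces of the braiding, which collapse by Schur's lemma.

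First I would use \cref{lem:dual-rels} to remove the dual braidings from the formula for $T$: one has $(\hat{R}_{V, V^*}^{-1})_{bc}^{rs} = (\hat{R}_{V, V})_{cs}^{br}$ and $(\hat{R}_{V^*, V^*}^{-1})_{sd}^{ql} = (\hat{R}_{V, V}^{-1})_{lq}^{ds}$, so that after setting $k = j$ and $l = i$ we are left with
\[
T_{abcd}^{ijji} = \sum_{p, q, r, s} (\hat{R}_{V, V^*})_{pq}^{jj}\, (\hat{R}_{V, V})_{ar}^{ip}\, (\hat{R}_{V, V}^{-1})_{iq}^{ds}\, (\hat{R}_{V, V})_{cs}^{br}.
\]
The summation over $j$ is the only genuinely new ingredient: $\sum_j (\hat{R}_{V, V^*})_{pq}^{jj}$ is the matrix of $\mathrm{ev}_V \circ \hat{R}_{V, V^*} : V \otimes V^* \to \mathbb{C}$, which is a $\Uqg$-invariant functional and hence proportional to $v_p \otimes f_q \mapsto \delta_{pq}\, q^{(2\rho, \lambda_p)}$; evaluating on $v_N \otimes f_N$ and using the normalisation $\hat{R}_{V, V^*}(v_N \otimes f_N) = q^{-(\omega_s, \omega_s)} f_N \otimes v_N$ (highest weight of $V$ against lowest weight of $V^*$) fixes the scalar, giving $\sum_j (\hat{R}_{V, V^*})_{pq}^{jj} = \delta_{pq}\, q^{-(\omega_s, \omega_s + 2\rho) + (2\rho, \lambda_p)}$.

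Next I would substitute this identity (the $\delta_{pq}$ forces $q = p$) and perform the remaining sums. Summing over $i$ and $p$, the combined weight factor $q^{(2\rho, \lambda_i + \lambda_p)}$ may be rewritten as $q^{(2\rho, \lambda_a + \lambda_r)}$ using weight-preservation of the first braiding $(\hat{R}_{V, V})_{ar}^{ip}$, after which $\sum_{i, p} (\hat{R}_{V, V})_{ar}^{ip} (\hat{R}_{V, V}^{-1})_{ip}^{ds} = \delta_{ad} \delta_{rs}$ by $\hat{R}_{V, V}^{-1} \hat{R}_{V, V} = \id$. One is thereby reduced to
\[
\sum_{i, j} q^{(2\rho, \lambda_i)} T_{abcd}^{ijji} = q^{-(\omega_s, \omega_s + 2\rho)}\, q^{(2\rho, \lambda_a)}\, \delta_{ad} \sum_r q^{(2\rho, \lambda_r)} (\hat{R}_{V, V})_{cr}^{br}.
\]
The last sum is the right partial quantum trace of the $\Uqg$-module map $\hat{R}_{V, V}$, hence a scalar multiple of $\id_V$; evaluating on the highest-weight vector, using that $\hat{R}_{V, V}(v_N \otimes v_r) = q^{(\omega_s, \lambda_r)} v_r \otimes v_N$, identifies the scalar as $\delta_{bc}\, q^{(\omega_s, \omega_s + 2\rho)}$. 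Multiplying the two prefactors then yields exactly $\delta_{ad} \delta_{bc}\, q^{(2\rho, \lambda_a)}$.

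The only step requiring real care is the bookkeeping of the powers of $q$: the proof works precisely because the scalar $q^{-(\omega_s, \omega_s + 2\rho)}$ produced by the partial trace of $\hat{R}_{V, V^*}$ is the inverse of the scalar $q^{(\omega_s, \omega_s + 2\rho)}$ produced by the partial quantum trace of $\hat{R}_{V, V}$ (the latter being nothing but the ribbon eigenvalue on $V(\omega_s)$), and the two highest-weight evaluations are what make this transparent. Beyond that, the whole computation reduces to the relation $\hat{R}\hat{R}^{-1} = \id$ once the dual braidings have been eliminated in the first step.
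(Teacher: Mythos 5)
Your computation is correct, and it reaches the result by a route that differs from the paper's in one essential point. Both arguments share the same skeleton: eliminate the dual braidings via \cref{lem:dual-rels}, use weight preservation of $\hat{R}_{V,V}$ to move the factor $q^{(2\rho,\lambda_i+\lambda_p)}$ across it, collapse $\hat{R}_{V,V}^{-1}\hat{R}_{V,V}=\id$, and recognise the remaining one-index sums as invariant partial traces which are scalars by Schur's lemma. The difference is in how the Schur scalars are handled. You compute both of them explicitly by highest-weight evaluation, obtaining $q^{-(\omega_s,\omega_s+2\rho)}$ for the contraction of $\hat{R}_{V,V^*}$ and $q^{(\omega_s,\omega_s+2\rho)}$ for the weighted partial trace of $\hat{R}_{V,V}$, and you need them to be mutually inverse. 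The paper instead never computes any constant: it contracts $T$ against $q^{(2\rho,\lambda_m)}(\hat{R}_{V,V^*}^{-1})_{jk}^{mm}$, proves only that this sum equals $c\,\delta_{jk}$ for some $c\neq 0$ (via the invariant vectors $\sum_i v_i\otimes f_i$ and $\sum_i q^{-(2\rho,\lambda_i)}f_i\otimes v_i$), and arranges matters so that the same unknown $c$ appears on both sides and cancels. Your version buys explicit values (and the pleasant interpretation via the ribbon eigenvalue), at the price of one step that goes beyond what the paper's stated normalisation gives you: the identity $\hat{R}_{V,V}(v_N\otimes v_r)=q^{(\omega_s,\lambda_r)}v_r\otimes v_N$ for \emph{all} $r$, not just for $r$ labelling a lowest weight vector. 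This is true in the conventions used here, because the non-Cartan part of the universal $R$-matrix lies in $U^+\otimes U^-$ and hence annihilates $v_N$ in its first leg, but it is exactly the kind of convention-sensitive fact that should be stated and justified (your evaluation of $\sum_j(\hat{R}_{V,V^*})_{pq}^{jj}$ at $v_N\otimes f_N$, by contrast, uses only the normalisation as literally stated, since $f_N$ is a lowest weight vector of $V^*$). With that fact supplied, your proof is complete; alternatively, the paper's cancellation trick shows how to avoid the issue altogether.
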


\begin{proof}
First of all we have
\[
\sum_{i, j, k, m} q^{(2 \rho, \lambda_i + \lambda_m)} (\hat{R}_{V, V^*}^{-1})_{j k}^{m m} T_{a b c d}^{i j k i} = \sum_{i, m} \sum_{r, s} q^{(2 \rho, \lambda_i + \lambda_m)} (\hat{R}_{V, V})_{a r}^{i m} (\hat{R}_{V^*, V^*}^{-1})_{s d}^{m i} (\hat{R}_{V, V^*}^{-1})_{b c}^{r s}.
\]
Next using \cref{lem:dual-rels} we can rewrite
\[
(\hat{R}_{V^*, V^*}^{-1})_{s d}^{m i} = (\hat{R}_{V^*, V})_{d i}^{s m} = (\hat{R}_{V, V}^{-1})_{i m}^{d s}.
\]
Also by weight reasons we have $(\hat{R}_{V, V})_{a r}^{i m} = 0$ unless $\lambda_i + \lambda_m = \lambda_a + \lambda_r$.
Then we obtain
\[
\sum_{i, j, k, m} q^{(2 \rho, \lambda_i + \lambda_m)} (\hat{R}_{V, V^*}^{-1})_{j k}^{m m} T_{a b c d}^{i j k i} = \delta_{a, d} q^{(2 \rho, \lambda_a)} \sum_r q^{(2 \rho, \lambda_r)} (\hat{R}_{V, V^*}^{-1})_{b c}^{r r}.
\]
Now suppose that $\sum_m q^{(2 \rho, \lambda_m)} (\hat{R}_{V, V^*}^{-1})_{j k}^{m m} = c \delta_{j k}$ for some $c \neq 0$.
Plugging this identity in the previous equation gives the claim of the lemma, hence it suffices to prove that it holds.

Let us consider the following vectors
\[
I_{V, V^*} = \sum_i v_i \otimes f_i \in V \otimes V^*, \quad
J_{V^*, V} = \sum_i q^{- (2 \rho, \lambda_i)} f_i \otimes v_i \in V^* \otimes V.
\]
It is easy to see that they are $\Uqg$-invariant.
Since $V$ is irreducible, we must have
\[
\hat{R}_{V^*, V}^{-1} (I_{V, V^*}) = c J_{V^*, V}, \quad
c \in \mathbb{C}.
\]
Moreover $c \neq 0$ since $\hat{R}_{V^*, V}$ is invertible. In components this gives the identity
\begin{equation}
\label{eq:id-tbraid}
\sum_i (\hat{R}_{V^*, V}^{-1})_{i i}^{k l} = \delta^{k l} c q^{- (2 \rho, \lambda_k)}.
\end{equation}
On the other hand using \cref{lem:dual-rels} and \cref{lem:ddual-rels} we can rewrite
\[
(\hat{R}_{V, V^*}^{-1})_{i j}^{k l} = (\hat{R}_{V^*, V^*})_{k i}^{l j} = (\hat{R}_{V^*, V^{**}}^{-1})_{l k}^{j i} = q^{(2 \rho, \lambda_i - \lambda_l)} (\hat{R}_{V^*, V}^{-1})_{l k}^{j i}.
\]
Finally using this and \eqref{eq:id-tbraid} we obtain
\[
\sum_m q^{(2 \rho, \lambda_m)} (\hat{R}_{V, V^*}^{-1})_{i j}^{m m} = q^{(2 \rho, \lambda_i)} \sum_m (\hat{R}_{V^*, V}^{-1})_{m m}^{j i} = \delta_{i j} c q^{(2 \rho, \lambda_i)}.
\]
This is the identity we wanted to establish, which concludes the proof.
\end{proof}


\begin{thebibliography}{50}

\bibitem[BeSm13]{besm13}
E. Beggs, S.P. Smith,
\textit{Non-commutative complex differential geometry},
Journal of Geometry and Physics 72 (2013): 7-33.

\bibitem[ChPr95]{chpr95}
V. Chari, A.N. Pressley,
\textit{A guide to quantum groups},
Cambridge university press, 1995.

\bibitem[ChTu14]{chtu14}
A. Chirvasitu, M. Tucker-Simmons,
\textit{Remarks on quantum symmetric algebras},
Journal of Algebra 397 (2014): 589-608.

\bibitem[FGR99]{fgr99}
J. Fröhlich, O, Grandjean, A. Recknagel,
\textit{Supersymmetric quantum theory and non-commutative geometry},
Communications in mathematical physics 203.1 (1999): 119-184.

\bibitem[EGNO16]{egno16}
P. Etingof, S. Gelaki, D. Nikshych, V. Ostrik,
\textit{Tensor categories},
Vol. 205. American Mathematical Soc., 2016.

\bibitem[HeKo04]{heko04}
I. Heckenberger, S. Kolb,
\textit{The locally finite part of the dual coalgebra of quantized irreducible flag manifolds},
Proceedings of the London Mathematical Society 89.2 (2004): 457-484.

\bibitem[HeKo06]{heko06}
I. Heckenberger, S. Kolb,
\textit{De Rham complex for quantized irreducible flag manifolds},
Journal of Algebra 305, no. 2 (2006): 704-741.

\bibitem[Huy05]{huy}
D. Huybrechts,
\textit{Complex geometry: an introduction},
Springer-Verlag Berlin Heidelberg, 2005.

\bibitem[KLvS11]{klvs11}
M. Khalkhali, G. Landi, W. van Suijlekom,
\textit{Holomorphic structures on the quantum projective line},
International Mathematics Research Notices 2011, no. 4 (2011): 851-884.

\bibitem[KlSC97]{klsc97}
A. Klimyk, K. Schmüdgen,
\textit{Quantum groups and their representations},
Texts and Monographs in Physics. Springer-Verlag, Berlin, 1997.

\bibitem[NeTu13]{netu13}
S. Neshveyev, L. Tuset,
\textit{Compact quantum groups and their representation categories},
Vol. 20. Paris: Société mathématique de France, 2013.

\bibitem[ÓBu16]{obu16}
R. Ó Buachalla,
\textit{Noncommutative complex structures on quantum homogeneous spaces},
Journal of Geometry and Physics 99 (2016): 154-173.

\bibitem[ÓBu17]{obu17}
R. Ó Buachalla,
\textit{Noncommutative Kähler structures on quantum homogeneous spaces},
Advances in Mathematics 322 (2017): 892-939.

\bibitem[StDi99]{stdi99}
J.V. Stokman, M.S. Dijkhuizen,
\textit{Quantized flag manifolds and irreducible*-representations},
Communications in mathematical physics 203.2 (1999): 297-324.

\bibitem[Tak79]{takeuchi}
M. Takeuchi,
\textit{Relative Hopf modules—equivalences and freeness criteria},
Journal of Algebra 60.2 (1979): 452-471.

\bibitem[Wan54]{wan54}
H.-C. Wang,
\textit{Closed manifolds with homogeneous complex structure},
American Journal of Mathematics 76.1 (1954): 1-32.

\end{thebibliography}
\end{document}